\numberwithin{equation}{section}
\newcommand{\cI}{{\mathcal I}}
\newcommand{\cL}{{\mathcal L}}
\newcommand{\cO}{{\mathcal O}}
\newcommand{\C}{{\mathbb C}}
\newcommand{\PP}{{\mathbb P}}
\newcommand{\bZ}{{\mathbb Z}}
\newcommand{\tM}{\widetilde{M}}
\newcommand{\tN}{\widetilde{N}}
\newcommand{\Ker}{\mathrm{Ker}}
\newcommand{\Image}{\mathrm{Im}\,}
\newcommand{\Iden}{\mathrm{Id}}
\newcommand{\Hom}{\mathrm{Hom}}
\newcommand{\rank}{\mathrm{rk}\,}
\newcommand{\Pic}{\mathrm{Pic}}
\newcommand{\HN}{\mathrm{HN}}
\newcommand{\ev}{\mathrm{ev}}
\newcommand{\isom}{\xrightarrow{\sim}}
\newcommand{\Kc}{K_{C}}
\newcommand{\Oc}{{{\cO}_C}}
\newcommand{\Oz}{{{\cO}_Z}}
\newcommand{\Span}{\mathrm{Span}\,}
\newcommand{\Spec}{\mathrm{Spec}\,}
\newcommand{\Quot}{\mathrm{Quot}}
\newcommand{\Hilb}{\mathrm{Hilb}}
\newcommand{\Ann}{\mathrm{Ann}}
\newcommand{\cl}{{\mathrm{cl}}}
\newcommand{\Gr}{\mathrm{Gr}}
\newcommand{\mult}{\mathrm{mult}}
\newcommand{\length}{\mathrm{length} \,}
\newcommand{\gen}{{\mathrm{gen}}}
\newcommand{\mev}{M_{E , V}}
\newcommand{\mevd}{M^\vee_{E , V}}
\newcommand{\mfw}{M_{F , W}}
\newcommand{\mfwd}{M^\vee_{F , W}}
\newcommand{\reddeg}{\mathrm{red\,deg\,}}
\newcommand{\bG}{\overline{G}}
\newcommand{\tG}{\widetilde{G}}
\newcommand{\opo}{\cO_{\PP^1}}
\newcommand{\pe}{\PP E^\vee}
\newcommand{\pf}{\PP F^\vee}
\newcommand{\pv}{{\PP V^\vee}}
\newcommand{\pw}{{\PP W^\vee}}
\newcommand{\ope}{\cO_{\pe}}
\newcommand{\opeo}{\ope (1)}
\newcommand{\opfo}{\cO_{\PP F^\vee} (1)}
\newcommand{\Iz}{\cI_Z}
\newtheorem{theorem}{{\textbf Theorem}}[section]
\newtheorem*{theorem*}{{\textbf Theorem}}
\newtheorem{proposition}[theorem]{{\textbf Proposition}}
\newtheorem{corollary}[theorem]{{\textbf Corollary}}
\newtheorem{lemma}[theorem]{{\textbf Lemma}}
\newtheorem{conjecture}[theorem]{{\textbf Conjecture}}
\newtheorem{remit}[theorem]{{\textbf Remark}}
\newenvironment{remark}{\begin{remit}\rm}{\end{remit}}
\newtheorem{question}[theorem]{{\textbf Question}}
\newtheorem{exait}[theorem]{{\textbf Example}}
\newenvironment{example}{\begin{exait}\rm}{\end{exait}}
\newtheorem{defnit}[theorem]{\textbf{Definition}}
\newenvironment{definition}{\begin{defnit}\rm}{\end{defnit}}
\title[Geometry of linearly stable coherent systems]{Geometry of linearly stable coherent systems over curves}
\author{Abel Castorena}
\email{abel@matmor.unam.mx}
\address{Centro de Ciencias Matem\'aticas -- UNAM Campus Morelia, Antigua Carretera a P\'atzcuaro \# 8701, Col.\ Ex Hacienda San Jos\'e de la Huerta, Morelia, Michoac\'an, Mexico C.\ P.\ 58089.}
\author{George H.\ Hitching}
\email{gehahi@oslomet.no}
\address{Oslo Metropolitan University, Postboks 4, St. Olavs plass, 0130 Oslo, Norway.}
\keywords{Linear stability; generated coherent system; scroll}
\subjclass{Primary 14H60; Secondary 14H51.}
\begin{document}

\begin{abstract}
Let $E$ be a vector bundle over a smooth curve $C$, and $V$ a generating space of sections of $E$. We characterise Mumford linear stability of the associated projective model of $\pe$ in $\pv$ in terms of geometric and cohomological properties of the coherent system $(E, V)$, and give some applications. We show that any $\PP^{r-1}$-bundle over $C$ has a linearly stable model in $\PP^{n-1}$ for any $n \ge r+2$. Furthermore; linear stability of $(E, V)$ is a necessary condition for stability of the kernel bundle of $(E, V)$, which is predicted by Butler's conjecture for general $C$ and $(E, V)$. We give new examples showing that it is not in general sufficient; in particular, a general bundle $E$ of large degree fits into a linearly stable coherent system $(E, V)$ with nonsemistable kernel bundle. Finally, we use these ideas to show the stability of $\mev$ for certain $(E, V)$ of type $(r, d, r+2)$ where $E$ is not necessarily stable.
\end{abstract}

\maketitle

\section{Introduction}

Let $X \subset \PP^{n-1}$ be a complex projective variety of dimension $r$. In the context of geometric invariant theory, D.\ Mumford defined \textit{linear semistability} of $X$ as follows. The \textsl{reduced degree} of $X$ in $\pv$ is defined as
\[ \reddeg X \ := \ \frac{\deg X}{n - r} . \]
Moreover, if $W \subseteq V$ is a vector subspace of dimension $m$, we write $p_W$ for the projection $\pv \dashrightarrow \pw$ with centre $\PP W^\perp = \PP^{n - m - 1}$. The following is \cite[Definition 2.16]{Mum}.

\begin{definition} \label{MumfordLinSst}
Let $X \subset \PP^{n-1}$ be as above. Then $X$ is said to be \textsl{(Mumford) linearly semistable} in $\PP^{n-1}$ if for all subspaces $W \subset V$ such that $p_W (X) \subseteq \pw$ has dimension $r$, we have $\reddeg X \le \delta \cdot \reddeg p_W ( X )$, where $\delta = \deg p_W|_X$.
 If inequality is strict for all such $W$, then $X$ is said to be \textsl{linearly stable}.
\end{definition}

Intuitively, linear stability implies that $X$ should not have many secants with large defect. For example, by \cite[Proposition 2.4]{BT}, if $X$ is linearly stable then it has no points of multiplicity greater than $\frac{\deg X}{n - r}$.

Mumford showed in \cite[Theorem 4.12]{Mum} that the Chow form of a linearly stable curve is stable in the GIT sense, and obtained a result in the same direction for varieties of higher dimension in \cite[Proposition 2.17]{Mum}. The statement for curves has found application in \cite{BT} and elsewhere; see \cite{CHL} for discussion. In particular, as was made explicit in \cite{MS}, linear stability of curves has implications for the well known conjecture of Butler \cite{But97}. This line of inquiry is extended to higher rank in \cite{CHL}, \cite{CH} and \cite{BO}, and will be further discussed below.

The goal of the present work is to continue the study of linear stability in the case where $X$ is the image of a projective bundle $\pe$ over a curve $C$, and $V$ a generating subspace of $H^0 ( \pe, \opeo ) \cong H^0 (C, E)$. To a greater extent than in \cite{CHL} and \cite{CH}, we focus on geometric aspects. The situation is closely related to the study of coherent systems over $C$, whose definition we now recall.

\begin{definition} \label{CohSysDefn}
A \textsl{coherent system of type $(r, d, n)$ over $C$} is a pair $(E, V)$ where $E \to C$ is a vector bundle of rank $r$ and degree $d$, and $V \subseteq H^0 (C, E)$ is a subspace of dimension $n$. Such an $(E, V)$ is \textsl{generated} if the evaluation homomorphism $\ev_{E, V} \colon \Oc \otimes V \to E$ a is surjective vector bundle map.
\end{definition}

The content of the article is as follows. In {\S} \ref{Prelims}, we recall or prove some preliminary statements on scrolls in projective space and generated coherent systems. Next, we prove a key technical result, Proposition \ref{LinSstCrit}, which characterises linear stability of a scroll $\pe \to \pv$ in terms of defective secants and in terms of the cohomology of subsystems of $(E, V)$. This has several applications.

Firstly, we recall that a notion of linear semistability for a generated coherent system $(E, V)$ over $C$ was proposed in \cite[Definition 5.1]{CHL}. It was claimed in \cite[Remark 5.18]{CHL} that this property is equivalent to linear semistability of the image of $\pe \to \pv$ in the sense of Definition \ref{MumfordLinSst}. It turns out that this statement is somewhat misleading: Although the respective notions of linear \emph{semistability} agree, Mumford linear \emph{stability} is slightly weaker when the bundle $E$ is not ample. 
 Therefore, a correspondingly weaker definition of linear (semi)stability for coherent systems was adopted in \cite{CH} (Definition \ref{DefnLinSstCohSys} below). We prove in Theorem \ref{EquivDefns} that this agrees with Mumford linear (semi)stability of the image of $\pe \to \pv$ for all generated $(E, V)$. This further justifies the use of the term ``linear stability'' with coherent systems.

Another application of Proposition \ref{LinSstCrit} is to prove the following.

\begin{theorem*}[Theorem \ref{AnyELinSt}]
Let $C$ be any smooth curve and $E \to C$ any vector bundle of rank $r$. Then for any $n \ge r+2$, there exists a projective model $\varphi \colon \PP E^\vee \to \PP^{n-1}$ which is linearly stable.
\end{theorem*}

\noindent The strategy is to embed $\pe$ sufficiently very amply in a projective space of large dimension and then take a general projection. In view of the relevance of linear stability for GIT and moduli, it is interesting that the degree of the embedding required turns out to depend on the slope of a maximal subbundle of $E^\vee$. This will be a subject of further inquiry; see also Question \ref{QuestionLinStChow} below.

In {\S\S} \ref{Counterexamples} and \ref{rdr+2}, we apply these results to the question of stability of \textsl{kernel bundles}. The kernel bundle $\mev$ of a generated coherent system $(E, V)$ is defined by the exact sequence
\begin{equation} \label{DSBseq}
0 \ \to \ \mev \ \to \ \Oc \otimes V \ \to \ E \ \to \ 0 .
\end{equation}
Butler's conjecture \cite[Conjecture 2]{But97}, which will be discussed further in {\S} \ref{RecallButler}, would imply in particular that $\mev$ is stable for a suitably general choice of $C$ and generated $(E, V)$. By the discussion in \cite[{\S} 1.3]{CH} (see also \cite[Remark 3.2]{MS}), if $\mev$ is a stable vector bundle then $(E, V)$ is a linearly stable coherent system. The converse is true under certain circumstances, as shown for example in \cite[Theorem 5.1]{MS} and \cite[Proposition 5.10]{CHL}, and implicitly in \cite[Lemma 2.2]{Mis}. There are, however, several counterexamples showing that without further assumptions, slope stability of $\mev$ is stronger than linear stability of $(E, V)$: see \cite[{\S} 8]{MS}, \cite[Theorem 4.1]{CT} and \cite[Theorem 4.2]{CHL} for rank one coherent systems, and \cite[Theorem 5.17]{CHL} for type $(2, d, 4)$. We give two new classes of such counterexamples.

Firstly, using pullbacks from $\PP^1$ as in \cite[{\S} 8]{MS}, for $n \ge r + 2$ and $C$ arbitrary, we use Theorem \ref{AnyELinSt} to construct linearly stable systems $(E, V)$ of type $(r, d, n)$ over $C$ with $\mev$ not semistable (Example \ref{PullbackCounterex}). This generalises \cite[Theorem 5.17]{CHL}.

Now the lack of semistability in the above-mentioned example comes from the fact that $\mev$ (and $E$) are pullbacks from $\PP^1$, and therefore sums of line bundles by Grothendieck's theorem. Regarding more general (for example, stable) $E$, we have the following.

\begin{theorem*}[Theorem \ref{AnyENotSst}]
Let $C$ be any curve of genus $g \ge 2$. For any $r \ge 2$, there exists an integer $d_0 (r)$ such that if $E$ is any stable bundle of rank $r$ and degree $d \ge d_0 (r)$, then there exists a generating subspace $V \subseteq H^0 ( C, E )$ of dimension $r + 2$ such that $(E, V)$ is generated and linearly stable, but the rank two bundle $\mev$ is not semistable.
\end{theorem*}

\noindent In particular, for $E$ of large degree, linear stability of $(E, V)$ never implies semistability of $\mev$ without some additional hypothesis on $(E, V)$.

We conclude with an example in the opposite direction: In {\S} \ref{rdr+2}, we construct linearly stable systems of type $(r, d, r+2)$ where $d$ is small, and use a generalisation of \cite[Proposition 5.10]{CHL} to show that $\mev$ is stable. We point out that $\left( E, H^0 (C, E) \right)$, although ample and generated, is not necessarily $\alpha$-stable (Definition \ref{alphaSt}); one motivation for the present investigation 
 is to understand the relation between linear stability and $\alpha$-stability of the coherent system $(E, V)$ on the one hand, and slope stability of $\mev$ on the other. This continues the line of inquiry of \cite{CH}, where coherent systems are constructed exhibiting various combinations of linear stability and GIT-stability.

Regarding topics of future investigation: Another reason for studying geometric aspects of linearly stable scrolls is that, as mentioned above, a linearly stable curve is Chow stable. Now scrolls are unusual among higher dimensional subvarieties of $\PP^{n-1}$ in that their secant and osculating spaces behave in many respects like those of curves. Therefore, the following was posed in \cite[Question 6]{CHL}.

\begin{question} \label{QuestionLinStChow}
Suppose that $\psi \colon \pe \to \pv$ is a linearly (semi)stable scroll. Is $\pe$ Chow (semi)stable?
\end{question}

A positive answer to Question \ref{QuestionLinStChow} might open interesting possibilities for studying moduli of scrolls, in the same way as for curves in \cite{Mum} and \cite{BT}. We hope that the results of this paper may be of help in answering this question.

Another problem of interest is to give a reasonable definition of linear stability for generated coherent systems $(E, V)$ over a higher dimensional variety $X$. Here one must take account of the choice of polarisation on $X$, and the ampleness properties of $(E, V)$ also play a significant role. We propose to investigate this question in the future.

\subsection*{Acknowledgements}

We thank Ali Bajravani and Ragni Piene for helpful discussions. The first author is supported by grant PAPIIT IN100723 ``Curvas, sistemas lineales en superficies proyectivas y fibrados vectoriales'' from DGAPA, UNAM. The second author thanks the Centro de Ciencias Matem\'aticas of UNAM Morelia for financial support, hospitality and excellent working conditions during visits in 2023 and 2024.

\section{Preliminaries} \label{Prelims}

Throughout this work, $C$ denotes a projective smooth curve of genus $g \ge 0$ over $\C$. In this section, we study scrolls over $C$, their secants, and their relation to generated coherent systems. We also give a brief overview of Butler's conjecture.

\subsection{Scrolls}

In the literature, a ``scroll over $C$'' is most often taken to be a projective bundle over $C$ equipped with an embedding in projective space which is linear on fibres. However, as linearly stable varieties may be singular, and for other reasons, it will be convenient for us to relax the condition that the linear system be ample.

\begin{definition} \label{DefnScroll}
We define a \textsl{scroll} as the image of the natural map $\pe \to \pv$ where $E$ is a vector bundle over $C$ and $V$ a generating subspace of $H^0 (\pe , \opeo)$. 
\end{definition}

\subsubsection{Maps to projective space}

Let $E \to C$ be a vector bundle of rank $r$ and degree $d > 0$. Let $W \subseteq H^0 (\pe , \opeo)$ be a subspace of dimension $n$. Via the natural identification
\[
H^0 ( \pe , \opeo ) \ \isom \ H^0 (C, E) ,
\]
we can identify $W$ with a subspace of $H^0 (C, E)$. Then the natural map $\psi \colon \pe \dashrightarrow \PP W^\vee$ can be realised by dualising the evaluation map $\ev \colon \Oc \otimes W \to E$, projectivising, and projecting to $\pw$ as follows:
\begin{equation} \label{DefnPsi}
\pe \ \dashrightarrow \ \pw \times C \ \to \ \pw .
\end{equation}

\begin{lemma} \label{DegreeProperties} Let $C$, $E$ and $\psi \colon \pe \dashrightarrow \pw$ be as above.
\begin{enumerate}
\renewcommand{\labelenumi}{(\alph{enumi})}
\item The series $\PP W \subseteq |\opeo|$ is base point free if and only if $W$ generates $E$.
\item Write $E_W$ for the subsheaf of $E$ generated by $W$. Then $\psi \colon \pe \dashrightarrow \pw$ factorises via a \emph{morphism} $\PP E_W^\vee \to \pw$.
\item Suppose that $W$ generates $E$. Then $( \deg \psi ) \cdot \deg \psi ( \pe ) = d$.
\end{enumerate}
\end{lemma}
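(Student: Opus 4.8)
The plan is to reduce all three parts to the tautological description of sections of $\opeo$. Writing $\pi\colon \pe \to C$ for the projection, I would dualise the tautological inclusion on $\pe$ to obtain a surjection $\pi^* E \twoheadrightarrow \opeo$ whose fibre over a point $\ell \in \PP E_c^\vee$ (a line $\ell \subset E_c^\vee$) is the evaluation $E_c \to \ell^\vee$, $v \mapsto (\phi \mapsto \phi(v))$, with kernel $\Ann(\ell) \subset E_c$. Under $H^0(\pe, \opeo) \isom H^0(C, E)$ a section $s$ corresponds to the image $\tilde s$ of $\pi^* s$, and the key observation is that $\tilde s$ vanishes at $\ell$ exactly when $s(c) \in \Ann(\ell)$, i.e.\ when $\phi(s(c)) = 0$ for a generator $\phi$ of $\ell$. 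For part (a), it follows that $\ell = \langle \phi\rangle$ is a base point of $\PP W$ iff $\phi(s(c)) = 0$ for all $s \in W$, iff $\phi$ annihilates $W(c) := \Span\{ s(c) : s \in W\} \subseteq E_c$; such a nonzero $\phi$ exists iff $W(c) \subsetneq E_c$. Hence $\PP W$ is base point free iff $W(c) = E_c$ for every $c$, which is precisely surjectivity of $\ev_{E,W}$, i.e.\ that $W$ generates $E$.

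For part (b), I would factor $\ev\colon \Oc \otimes W \to E$ through its image as $\Oc \otimes W \twoheadrightarrow E_W \hookrightarrow E$. As a subsheaf of a locally free sheaf on a smooth curve, $E_W$ is locally free, so $\PP E_W^\vee$ is a genuine projective bundle; each $s \in W$ factors through $E_W$, giving $W \subseteq H^0(C, E_W)$, and $W$ generates $E_W$ by construction, so part (a) makes $\PP W$ base point free on $\PP E_W^\vee$. Dualising the two maps above gives $E^\vee \to E_W^\vee \hookrightarrow \Oc \otimes W^\vee$, in which the second arrow is a subbundle inclusion (dual to the bundle surjection $\Oc \otimes W \twoheadrightarrow E_W$). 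Projectivising this factorisation over $C$ exhibits $\psi$ as the composite $\pe \dashrightarrow \PP E_W^\vee \to \pw \times C \to \pw$; only the first arrow, a fibrewise linear projection, is rational, while $\PP E_W^\vee \to \pw \times C$ is a closed immersion and hence $\PP E_W^\vee \to \pw$ is the desired morphism.

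For part (c), assume $W$ generates $E$, so by (a) $\psi\colon \pe \to \pw$ is a morphism defined by the subsystem $\PP W \subseteq |\opeo|$, whence $\psi^* \cO_{\pw}(1) = \opeo$. Set $\xi := c_1(\opeo)$ and $H := c_1(\cO_{\pw}(1))$, so $\psi^* H = \xi$, and note $\dim \pe = r$. First I would compute $\int_{\pe} \xi^r$ from the Grothendieck relation $\sum_{i=0}^r (-1)^i \pi^* c_i(E)\,\xi^{r-i} = 0$ and $\pi_*(\xi^{r-1}) = 1$: since $c_i(E) = 0$ for $i \ge 2$ on the curve $C$, this collapses to $\pi_*(\xi^r) = c_1(E)$, giving
\[ \int_{\pe} \xi^r \ = \ \int_C c_1(E) \ = \ d . \]
Because $d > 0$ this integral is nonzero, which forces the image $X := \psi(\pe)$ to have dimension $r$ --- otherwise $H^r$, and hence $(\psi^* H)^r = \xi^r$, would restrict to zero --- so $\psi$ is generically finite onto $X$ of some degree $\delta = \deg\psi$. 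The projection formula $\psi_*[\pe] = \delta\,[X]$ then yields
\[ d \ = \ \int_{\pe}(\psi^* H)^r \ = \ \delta \int_X H^r \ = \ (\deg \psi)\cdot \deg\psi(\pe) . \]

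The hard part will be the one genuinely geometric point hidden in (c): ruling out that $\psi$ contracts the base direction and maps $\pe$ onto a variety of dimension $r-1$, in which case $\deg\psi$ would be meaningless. I would sidestep a direct argument by reading this off \emph{a posteriori} from the positivity $\int_{\pe}\xi^r = d > 0$, as above. The only other point needing care is to fix the projectivisation convention consistently, so that $\pi_* \opeo = E$ and the Chern-class signs in the Grothendieck relation match the vanishing criterion used in part (a).
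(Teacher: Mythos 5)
Your proof is correct, and parts (a) and (b) run along essentially the same lines as the paper's: the paper disposes of (a) with ``follows from the definition'' of $\psi$ via the dualised evaluation map, and proves (b) exactly by your factorisation $\Oc \otimes W \twoheadrightarrow E_W \hookrightarrow E$, dualised and projectivised; your fibrewise description of $\pi^* E \twoheadrightarrow \opeo$ and the remark that $E_W^\vee \hookrightarrow \Oc \otimes W^\vee$ is a subbundle inclusion (so $\PP E_W^\vee \to \pw \times C$ is a closed immersion) merely make that terse sketch explicit. The genuine divergence is in (c). The paper argues geometrically: it intersects $\psi(\pe)$ with $\PP \Pi^\perp$ for a general $r$-dimensional subspace $\Pi \subseteq W$, identifies $\pe \times_{\pv} \PP \Pi^\perp$ with the projectivised zero locus of the transposed evaluation ${}^t \ev \colon E^\vee \to \Oc \otimes \Pi^\vee$, and invokes \cite[Lemma 5.4]{Hit20} to deform $\Pi$ so that the cokernel $\tau$ has reduced support, whence the linear section consists of exactly $\deg E = d$ reduced points. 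You instead compute $\int_{\pe} \xi^r = d$ from the Grothendieck relation (your sign convention, with $\pi_* \opeo = E$ and hence $\sum_i (-1)^i \pi^* c_i(E)\, \xi^{r-i} = 0$, is the right one) and conclude by the projection formula. Both arguments are sound, and yours buys two things: it is self-contained, avoiding the genericity lemma from \cite{Hit20}; and it actually \emph{proves} the claim $\dim \psi(\pe) = r$, which the paper asserts without argument from $d > 0$ --- your a posteriori deduction is valid, though the clean phrasing is via the projection formula ($\psi_*[\pe]$ is an $r$-cycle, so if $\dim \psi(\pe) < r$ then $\psi_*[\pe] = 0$ and $\int_{\pe} \xi^r = \int_{\pw} H^r \cap \psi_*[\pe] = 0$, contradicting $d > 0$), rather than ``$H^r$ restricts to zero''. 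What the paper's argument buys in exchange is the concrete scheme-theoretic picture of a general linear section of the scroll as $d$ reduced points; it is this picture, not just the numerical identity, that the rest of the paper exploits (compare the remark following Proposition \ref{LinSstCrit}, where $\frac{d}{n-r}$ is interpreted through subschemes $Z_0 = \pe \times_{\pv} \PP W^\perp$, and the secant-theoretic analysis of $\pi$-nondefective subschemes), so the authors' more laborious route is not an accident.
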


\begin{proof}
Statement (a) follows from the definition (\ref{DefnPsi}). For (b): We observe that the evaluation map factorises as $\Oc \otimes W \to E_W \to E$ where the first map is a vector bundle surjection and the second is a sheaf injection. Dualising, projectivising and projecting as in (\ref{DefnPsi}), we obtain the desired factorisation. 

As for (c): As $d > 0$, we have $\dim \, \psi ( \pe ) = r$. Let $\Pi \subseteq W$ be a subspace of dimension $r$ which is general in the sense that 
\[
\pe \times_{\pv} \PP \Pi^\perp
\]
is a degree zero subscheme of length equal to $(\deg \psi ) \cdot \deg \psi (\pe)$. Transposing the evaluation map $\ev \colon \Oc \otimes \Pi \to E$, we obtain
\begin{equation} \label{WdElemTrans}
0 \ \to \ E^\vee \ \xrightarrow{^t\ev} \ \Oc \otimes \Pi^\vee \ \to \ \tau \ \to \ 0 .
\end{equation}
We deduce that there is a diagram of varieties
\[ \xymatrix{ & \pe \ar[d] \ar@{-->}[dr] & \\
 \PP \Pi^\perp \ar@{^{(}->}[r] & \pw \ar@{-->>}[r] & \PP \Pi^\vee . } \]
Furthermore, as $W$ generates $E$, after deforming $\Pi$ if necessary, by \cite[Lemma 5.4]{Hit20} we may assume that $\tau$ has reduced support. Then $\pe \times_{\pv} \PP \Pi^\perp$ is identified with the projectivisation of the set $\{ e \in E : {^t\ev} (e) = 0 \}$. From (\ref{WdElemTrans}) we see that this is a set of $\deg E$ reduced points, as desired.
\end{proof}

\subsubsection{Subschemes of dimension zero and elementary transformations} \label{Zet}

Here we recall some facts from \cite{Hit20} and \cite{Hit23}. Let $\psi \colon \pe \to \pv$ be a scroll, and let $Z \subset \pe$ be a closed subscheme. Tensoring the ideal sheaf sequence of $Z$ by $\opeo$ and taking direct image by $\pi \colon \pe \to C$, we obtain
\begin{equation} \label{EZseq}
0 \ \to \ \pi_* \Iz (1) \ \to \ E \ \to \ \pi_* \cO_Z (1) \ \to \ \cdots
\end{equation}
We will write $E^Z$ for the sheaf $\pi_* \Iz (1)$. If $Z$ is of dimension zero, then it is supported over finitely many points of $C$ and $E^Z$ is an elementary transformation of $E$; that is, a full rank subsheaf.

The following is essentially \cite[Lemma 2.14]{Hit23}, which is a generalisation of \cite[Propositions 4.2 and 4.4]{Hit20}. For the reader's convenience, we give a proof here.

\begin{proposition} \label{SpanZ}
Suppose that $V \subseteq H^0 (C, E)$ is a generating subspace of dimension $n$, and let $\psi \colon \pe \to \pv$ be the associated map to $\PP^{n - 1}$.
\begin{enumerate}
\item[(a)] For each closed subscheme $Z \subset \pe$, we have
\[
\Span \psi (Z) = \Ker \left( V^\vee \to \left( V \cap H^0 \left( C, E^Z \right) \right)^\vee \right) .
\]
\item[(b)] Suppose that $E^{Z'} = E^Z$ as subsheaves of $E$; that is, $E^{Z'}$ and $E^Z$ define the same element of the appropriate Quot scheme of $E$. Then $\Span \psi (Z) = \Span \psi (Z')$.
\end{enumerate}
\end{proposition}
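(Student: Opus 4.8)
The plan is to compute the span by projective duality, determining exactly which hyperplanes of $\pv$ contain the image $\psi(Z)$. Hyperplanes of $\pv$ are parametrised by $\PP V$: a nonzero $s \in V$ cuts out $H_s = \{ [\phi] \in \pv : \phi (s) = 0 \}$, and under the identification $V \cong H^0 ( \pv, \cO_{\pv}(1) )$ the form $s$ is the defining section of $H_s$, while its pullback $\psi^* s \in H^0 ( \pe, \opeo ) = H^0 (C, E)$ is just the image of $s$ under $V \hookrightarrow H^0 (C, E)$. First I would note that, by definition, $\Span \psi (Z)$ is the intersection of all hyperplanes $H_s$ whose scheme-theoretic preimage contains $Z$; equivalently, of all those $H_s$ for which $\psi^* s$ restricts to zero on $Z$ as a section of $\opeo|_Z = \cO_Z (1)$.

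The key step is to identify the set of such $s$ with $V \cap H^0 ( C, E^Z )$. Applying $\pi_*$ to the twisted ideal sheaf sequence $0 \to \Iz (1) \to \opeo \to \cO_Z (1) \to 0$ produces precisely (\ref{EZseq}), whose initial terms read $0 \to E^Z \to E \to \pi_* \cO_Z (1)$. Taking global sections over $C$ and using $H^0 ( C, \pi_* \cO_Z (1) ) = H^0 ( \pe, \cO_Z (1) )$, a section $s \in H^0 (C, E)$ restricts to zero on $Z$ if and only if $s \in H^0 ( C, E^Z )$. Intersecting with $V$, I conclude that $H_s$ contains $\psi (Z)$ if and only if $s \in V \cap H^0 ( C, E^Z )$.

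It then remains to translate ``intersection of hyperplanes'' into the asserted kernel. A point $[\phi] \in \pv$ lies in every $H_s$ with $s \in V \cap H^0 ( C, E^Z )$ precisely when $\phi$ annihilates this subspace, that is, when $\phi$ lies in the kernel of the restriction map $V^\vee \to ( V \cap H^0 ( C, E^Z ) )^\vee$ dual to the inclusion. This proves (a). Part (b) is then immediate: if $E^{Z'} = E^Z$ as subsheaves of $E$ then $H^0 ( C, E^{Z'} ) = H^0 ( C, E^Z )$, hence $V \cap H^0 ( C, E^{Z'} ) = V \cap H^0 ( C, E^Z )$, and applying (a) to $Z$ and to $Z'$ gives $\Span \psi (Z') = \Span \psi (Z)$.

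The step requiring the most care is the scheme-theoretic bookkeeping of the middle paragraph: I must read ``$H_s \supseteq \psi (Z)$'' through the scheme-theoretic image, so that for non-reduced $Z$ the vanishing condition is exactly the vanishing of $\psi^* s$ in $H^0 ( \cO_Z (1) )$ extracted from (\ref{EZseq}), rather than mere set-theoretic containment. Once this is fixed, left-exactness of $\pi_*$ and of the global sections functor makes the identification with $V \cap H^0 ( C, E^Z )$ automatic, and the duality of the last paragraph is routine linear algebra.
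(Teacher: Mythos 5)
Your proof is correct and follows essentially the same route as the paper: both identify $\Span \psi(Z)$ with the intersection of hyperplanes cut out by sections in $V \cap H^0(\pe, \Iz(1))$, use $E^Z = \pi_* \Iz(1)$ (equivalently, the pushforward of the twisted ideal sequence and left-exactness) to identify this space with $V \cap H^0(C, E^Z)$, and finish with the same linear-algebra dualisation, with (b) following formally from (a). Your explicit care with scheme-theoretic containment for non-reduced $Z$ is a point the paper leaves implicit, but it does not change the argument.
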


\begin{proof}
(a) By definition, $\Span \psi (Z)$ is the intersection of all codimension one subspaces of $\pv$ containing $\psi (Z)$; that is, hyperplanes in $\pv$ defined by sections belonging to $V \cap H^0 ( \pe , \Iz (1) )$. By linear algebra, this coincides with
\begin{equation} \label{SpanAsKernel}
\PP \Ker \left( V^\vee \ \to \ (V \cap H^0 ( \pe , \Iz (1) )^\vee \right) .
\end{equation}
As $E^Z = \pi_* \Iz (1)$, via the canonical identification $H^0 (C, E) \isom H^0 (\pe, \opeo)$ we have an identification
\[
V \cap H^0 \left( C, E^Z \right) \ \isom \ V \cap H^0 (\pe, \Iz (1)) .
\]
Part (a) now follows from (\ref{SpanAsKernel}). For the rest: If $E^{Z'} = E^Z$ as subsheaves of $E$, then $V \cap H^0 \left( C, E^{Z'} \right) = V \cap H^0 \left( C, E^Z \right)$. Now (b) follows from (a).
\end{proof}

Now subschemes $Z$ with \textsl{defect}; that is, with $\Span \psi (Z)$ of smaller dimension than expected, are fundamental to the study of linear stability. There is, however, one type of defective secant which we will wish to avoid; those called \textsl{$\pi$-defective} in \cite{Hit20}:

\begin{definition} \label{DefnPiNondef}
Let $\pi \colon \pe \to C$ be a projective bundle. A length $e$ subscheme $Z \subset \pe$ is said to be \textsl{$\pi$-nondefective} if the map $E \to \pi_* \cO_Z (1)$ in (\ref{EZseq}) is surjective; equivalently, if $\deg E^Z = \deg E - e$. Otherwise, we say that $Z$ is \textsl{$\pi$-defective}.
\end{definition}

If $Z$ is $\pi$-defective, then $\psi (Z)$ has defect in $\pv$. However, this arises not from the global geometry of $\pe$ in $\pv$, but from the fact that any subscheme contained in a fibre of $\pe$ can span at most a $\PP^{r-1}$ in $\pv$. See \cite[{\S} 3.1]{Hit20} and \cite[{\S} 2.5.1]{Hit23} for further discussion. 

Recall now that a zero-dimensional subscheme is said to be \textsl{curvilinear} if it is contained in a smooth curve; equivalently, if its tangent spaces have dimension at most one.

\begin{theorem} \label{F=EZ}
Let $E \to C$ be a vector bundle and $\pi \colon \pe \to C$ the associated projective bundle. Then for any $e \ge 1$ and any elementary transformation $[F \subset E] \in \Quot^e (E)$, there exists a $\pi$-nondefective and curvilinear $Z \in \Hilb^e \left( \pe \right)$ such that $F = E^Z$. 
\end{theorem}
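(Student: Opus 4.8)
The plan is to reduce the statement to a local construction at each point of the support of the torsion sheaf $E/F$, and to realise each local elementary divisor by a ``horizontal'' jet in $\pe$. Write $T = E/F$, a torsion sheaf of length $e$ supported on finitely many points $c_1, \dots, c_k \in C$. Both $\pi$-nondefectiveness and curvilinearity can be checked one point of $C$ at a time, and $E^Z = \bigcap_j E^{Z_j}$ whenever the $Z_j$ have pairwise disjoint support (since $\cI_Z = \bigcap_j \cI_{Z_j}$ and $\pi_*$ is left exact). So it suffices to treat a single point $c \in \Supp(T)$ and produce a curvilinear, $\pi$-nondefective $Z$ over $c$ with $E^Z = F$ locally at $c$, then take the disjoint union over $c_1, \dots, c_k$.

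First I would put $F$ in elementary divisor form at $c$. Choosing a local coordinate $t$ and a local trivialisation $E \cong \cO^r$ adapted to $F$, Smith normal form over the discrete valuation ring $\cO_{C,c}$ gives a basis $e_1, \dots, e_r$ of $E_c$ and integers $a_1, \dots, a_r \ge 0$ with $\sum_i a_i$ equal to the local length of $T$, such that $F$ is locally $\{ (s_1, \dots, s_r) : s_i \in t^{a_i}\cO_{C,c} \}$. As $T$ is a quotient of $\cO^r$, the positive exponents $a_1, \dots, a_m > 0$ satisfy $m \le r$.

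Next I would realise each cyclic summand $\cO_{C,c}/t^{a_i}$ geometrically. Recall that a section of $\ope(1)$, viewed as a section of $E$, vanishes at a point $p \in \pi^{-1}(c)$ exactly when its value at $c$ lies in the hyperplane of $E_c$ corresponding to $p$. For $i = 1, \dots, m$ let $p_i \in \pi^{-1}(c)$ be the point corresponding to the coordinate hyperplane $\langle e_1, \dots, \widehat{e_i}, \dots, e_r \rangle$, and let $Z_i$ be the length-$a_i$ thickening of $c$ inside the constant (horizontal) local section of $\pi$ through $p_i$. Each $Z_i$ lies on a smooth arc, hence is curvilinear, and the $p_i$ are distinct points of the fibre. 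A section $s = \sum_k s_k e_k$ of $E$ vanishes on $Z_i$ precisely when $s_i \in t^{a_i}\cO_{C,c}$, so $E^{Z_i} = \{ s : s_i \in t^{a_i}\cO_{C,c}\}$ has colength $a_i$. Setting $Z = Z_1 \sqcup \cdots \sqcup Z_m$, the supports are disjoint, so $E^Z = \bigcap_i E^{Z_i}$, which recovers exactly the local description of $F$. Since $\pi_* \Oz(1)$ has length $\length Z = \sum_i a_i$ while the image of $E \to \pi_* \Oz(1)$ is $E/E^Z = E/F$ of the same length, this map is surjective; that is, $Z$ is $\pi$-nondefective.

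The main point requiring care is the compatibility between the purely algebraic elementary divisor decomposition of $F$ at $c$ and its geometric realisation: one must check that the horizontal jets $Z_i$, placed at the distinct fibre points $p_i$ determined by the adapted basis, cut out kernel sheaves whose intersection is the \emph{given} subsheaf $F$ and not merely an abstractly isomorphic elementary transformation. Transversality to the fibre, which is what makes the jets horizontal rather than fibrewise, is exactly what forces $E^Z$ to have the expected colength and hence guarantees $\pi$-nondefectiveness; and the bound $m \le r$ ensures there are always enough distinct coordinate hyperplanes to separate the $Z_i$. Assembling the pieces over the finitely many points of $\Supp(T)$ then yields a curvilinear, $\pi$-nondefective $Z \in \Hilb^e(\pe)$ with $F = E^Z$, as required.
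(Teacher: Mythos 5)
Your construction is correct, and it is worth noting that the paper itself contains no inline argument for this theorem: it simply cites \cite[Theorem 2.2 (a)]{Hit20} for the existence of $Z$ and the proof of \cite[Theorem 2.6 (a)]{Hit20} for curvilinearity, so your proposal is a self-contained substitute for a citation rather than a parallel of a written-out proof. The underlying mechanism you use --- decompose the torsion sheaf $E/F$ point by point into cyclic pieces $\cO_{C,c}/t^{a_i}$ and realise each piece by a length-$a_i$ curvilinear subscheme lying on a local section of $\pi$ --- is the standard one behind the cited result; your individual touch is the preliminary Smith normal form over the discrete valuation ring $\cO_{C,c}$, which lets every jet be a \emph{constant} section through a coordinate point $p_i$ of the fibre. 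This buys two simplifications simultaneously: the points $p_1, \ldots, p_m$ attached to distinct coordinate hyperplanes are automatically distinct, so the disjointness needed for $\cI_Z = \bigcap_j \cI_{Z_j}$ comes for free (with an arbitrary cyclic decomposition of $E/F$, two jets over the same point of $C$ could a priori pass through the same point of the fibre, requiring a perturbation or a further argument), and the identification $E^{Z_i} = \{ s : s_i \in t^{a_i} \cO_{C,c} \}$ becomes a one-line computation in the adapted trivialisation. Your supporting steps are all sound: left exactness of $\pi_*$ gives $E^Z = \bigcap_j E^{Z_j}$ for disjointly supported pieces and identifies $E^Z$ with $\Ker \left( E \to \pi_* \Oz (1) \right)$; since $\pi|_Z$ is finite, $\pi_* \Oz (1)$ has length $\length Z = \sum_i a_i$, and comparing with the colength of $F$ yields surjectivity, hence $\pi$-nondefectiveness; and each $Z_i$ lies on the smooth arc traced by the constant section, giving curvilinearity of the disjoint union. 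The only points you pass over silently are routine: that the adapted basis of the stalk $E_c$ extends to a frame on a neighbourhood of $c$, and that a full-rank subsheaf of $E$ is determined by its stalks, which your intersection formula handles implicitly.
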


\begin{proof}
The existence of $Z$ is \cite[Theorem 2.2 (a)]{Hit20}, and curvilinearity is shown in the proof of \cite[Theorem 2.6 (a)]{Hit20}. 
\end{proof}

\begin{remark}
By \cite[Theorem 2.2 (b)]{Hit20}, if $[F \subset E]$ is general in $\Quot^e (E)$ in the sense that $E/F$ has reduced support on $C$, then the $Z$ of the theorem is unique. More precisely; the choice of such an $F$ is canonically equivalent to a choice of $e$ elements $\phi_1 , \ldots , \phi_e$ of $\pe$ lying in distinct fibres of $\pi \colon \pe \to C$, and then $Z = \{ \phi_1 , \ldots , \phi_e \}$.
\end{remark}

\subsection{Butler's conjecture} \label{RecallButler}

We now recall a well known conjecture of Butler, which is of fundamental relevance to the study of linear stability.

Although our focus is on linear semistability, by far the most studied notion of semistability for coherent systems is the following notion of \textsl{$\alpha$-semistability} for a real parameter $\alpha$, which arises directly from GIT.

\begin{definition} \label{alphaSt}
Let $\alpha$ be a positive real number. A coherent system $(E, V)$ (not necessarily generated) is said to be \textsl{$\alpha$-semistable} if for all proper subbundles $F \subset E$ and all subspaces $W \subseteq V \cap H^0 (C, F)$, we have 
\begin{equation} \label{alphaSlopeIneq}
\frac{\deg F}{\rank F} + \alpha \cdot \frac{\dim W}{\rank F} \ \le \ \frac{\deg E}{\rank E} + \alpha \cdot \frac{\dim V}{\rank E} .
\end{equation}
We say that $(E, V)$ is \textsl{$\alpha$-stable} if inequality is strict for all such pairs $(F, W)$.
\end{definition}

King and Newstead \cite{KN} constructed moduli spaces $G(r, d, n; \alpha)$ for $\alpha$-semistable coherent systems. The literature on moduli of coherent systems is very considerable; we refer the reader to \cite{New} for a recent survey. A detailed comparison of $\alpha$-stability and linear stability can be found in \cite{CH}.

For a generated coherent system $(E, V)$, as mentioned in the introduction, the kernel bundle $\mev$ is defined by the exact sequence
\[
0 \ \to \ \mev \ \to \ \Oc \otimes V \ \to \ E \ \to \ 0 .
\]
The bundle $\mev^\vee$ is called the \textsl{dual span bundle} (DSB) of the pair $(E, V)$, and $\left( \mev^\vee, V^\vee \right)$ is again a generated coherent system. Butler's conjecture \cite[p.\ 4]{But97} may be stated as follows.

\begin{conjecture}[Butler]
Let $C$ be a general curve. Suppose that $n > r$ and $\frac{1}{\alpha} \gg 0$. Then a general element of $G(r, d, n; \alpha)$ is generated, and the association $(E, V) \mapsto \left( \mev^\vee, V^\vee \right)$ defines a birational equivalence $G(r, d, n; \alpha) \ \dashrightarrow \ G(n-r, d, n; \alpha)$.
\end{conjecture}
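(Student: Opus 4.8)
Since the statement is Butler's conjecture, a well-known open problem rather than a result with a known proof, what follows is the strategy one would pursue and an indication of where the genuine difficulty lies, not a complete argument. The natural framework is to exploit the fact that the dual span construction is an \emph{involution}. Dualising the defining sequence (\ref{DSBseq}) yields
\begin{equation*}
0 \ \to \ E^\vee \ \to \ \Oc \otimes V^\vee \ \to \ \mevd \ \to \ 0 ,
\end{equation*}
so the kernel bundle of $\left( \mevd, V^\vee \right)$ is $E^\vee$, whence its dual span bundle is $\left( E^\vee \right)^\vee \cong E$ with space of sections $V$, and the association $(E, V) \mapsto \left( \mevd, V^\vee \right)$ is its own inverse up to the canonical identifications. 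Moreover $\mev$ has rank $n - r$ and degree $-d$, so $\mevd$ has type $(n - r, d, n)$, matching the target moduli space. A preliminary bookkeeping step is to confirm that $\dim G(r, d, n; \alpha) = \dim G(n - r, d, n; \alpha)$; this is forced once the correspondence is shown to be generically bijective, but it can also be read off directly from the dimension formula for moduli of coherent systems.

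The plan then splits according to the two assertions. First I would treat generation: for $1/\alpha \gg 0$, the $\alpha$-slope inequality (\ref{alphaSlopeIneq}) degenerates to slope-semistability of $E$ together with a genericity condition on $V$, so a general element of $G(r, d, n; \alpha)$ has $E$ stable. Taking $C$ general and $d$ sufficiently large, Brill--Noether theory on a general curve should guarantee that $h^0 (C, E)$ is as expected and that a general $n$-dimensional subspace $V \subseteq H^0 (C, E)$ generates $E$. This is the least difficult part, being close to standard results on generic generation of sections of a general stable bundle.

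Second, and this is the crux, I would show that the involution above restricts to a birational self-map between the two moduli spaces: for general generated $(E, V)$, the system $\left( \mevd, V^\vee \right)$ is again $\alpha$-semistable and generated, and conversely. By the involutive property it suffices to establish one direction on a dense open set. Preservation of $\alpha$-semistability is intimately tied to semistability of the kernel bundle $\mev$ itself, so the real content is to prove that $\mev$ is semistable for general $C$ and general $(E, V)$. The main obstacle is precisely here: as the counterexamples constructed later in this paper show, neither linear stability of $(E, V)$ nor $\alpha$-stability alone forces $\mev$ to be semistable, so genuine genericity of both the curve and the coherent system must be used. The standard route is a degeneration argument --- specialising $C$ to a reducible or nodal curve (for instance a chain of elliptic components) and $(E, V)$ to a limit whose kernel bundle is manifestly semistable, then propagating the conclusion to the general member via openness of semistability and upper semicontinuity. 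Controlling the limit linear series and ruling out destabilising subbundles in the limit is the delicate point, and is exactly what has kept Butler's conjecture open in this generality.
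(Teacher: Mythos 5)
You have correctly identified the essential point: this statement is Butler's conjecture, stated in the paper as an open problem (with attribution to \cite{But97}), and the paper offers no proof of it --- indeed, the surrounding text explicitly says it ``remains open for most types $(r, d, n)$ with $r \ge 2$.'' So there is no proof in the paper against which to compare your attempt, and your refusal to manufacture a complete argument is the right call.

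As a strategy sketch, what you write is sound and consistent with both the paper and the literature. The involutive property of the dual span construction is correct: dualising the sequence (\ref{DSBseq}) does exhibit $E^\vee$ as the kernel bundle of $\left( \mevd, V^\vee \right)$, though you should note the small caveat that for $V^\vee \to H^0 (C, \mevd)$ to be injective one needs $h^0 (C, E^\vee) = 0$ (cf.\ Lemma \ref{StructureGeneratedCohSys}, which bounds $h^0 (C, E^\vee)$ for generated systems and identifies the trivial summand obstruction); this holds for general elements but is not automatic. Your identification of the crux --- semistability of $\mev$ for general $C$ and general $(E, V)$, with the paper's own counterexamples (Example \ref{PullbackCounterex}, Theorem \ref{AnyENotSst}) showing that linear stability alone cannot supply it --- matches exactly the role the conjecture plays in this paper, where linear stability is presented as a \emph{necessary} condition for stability of $\mev$ and the open question is when it suffices. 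The degeneration route you describe (limit linear series on chains of elliptic curves, then openness of semistability) is indeed the method behind the known cases cited in the paper, such as \cite{BBN} for $r = 1$ and \cite{BMNO}; and {\S} \ref{rdr+2} of the paper illustrates the complementary approach of deducing stability of $\mev$ from linear stability under extra numerical hypotheses ($d < 2 \delta_{r+1}(C)$, Lemma \ref{LinStImpliesSlopeSt}). One further technical point to keep in mind if you pursue this: $\alpha$-semistability of $\left( \mevd, V^\vee \right)$ for $\frac{1}{\alpha} \gg 0$ is not literally equivalent to slope semistability of $\mev$ --- one must also control subsystems with sections, so the passage between the two requires an argument of the kind given in \cite[Lemma 3.1]{CH}, which the paper cites for precisely this implication.
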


\noindent By the discussion on \cite[p.\ 3]{But97} (see also \cite[Lemma 3.1]{CH}), the conjecture would imply in particular that for a general element $(E, V)$ of each component of $G(r, d, n; \alpha)$, the vector bundle $\mev$ is semistable.

Butler's conjecture has been proven in many cases; see for example \cite{BBN} for $r = 1$, and \cite{BMNO} and \cite[Theorem 6.16]{BO} for some cases with $r \ge 2$. However, it remains open for most types $(r, d, n)$ with $r \ge 2$. In {\S} \ref{rdr+2}, we will use linear stability to prove the slope stability of $\mev$ for certain $(E, V)$ of type $(r, d, r+2)$.

\subsection{Structure of generated coherent systems}

In this subsection we prove some facts we will need in the study of linear stability of generated coherent systems.

The following lemma must be known, but we were unable to locate it in the literature. Recall that a vector bundle $E$ (over any variety) is said to be \textsl{ample} if $\opeo$ is ample on $\pe$. By \cite[Theorem 2.4]{Har71}, a bundle $E$ over a complex projective curve $C$ is ample if and only if all quotients of $E$ have positive degree.

\begin{lemma} \label{StructureGeneratedCohSys}
Let $(E, V)$ be a generated coherent system over $C$.
\begin{enumerate}
\item[(a)] The vector bundle map $\ev_{E^\vee}$ is everywhere injective. In particular, $h^0 (C, E^\vee) \le r$.
\item[(b)] There is a split exact vector bundle sequence
\[
0 \ \to \ E_1 \ \to \ E \ \xrightarrow{\left( \ev_{E^\vee} \right)^\vee} \ \Oc \otimes H^0 (C, E^\vee)^\vee \ \to \ 0
\]
where $E_1$ is ample (and in particular $h^0 (C, E_1^\vee) = 0$). Moreover, there is a direct sum decomposition $V = V_1 \oplus V_0$ where $V_1 = V \cap H^0 (C, E_1)$ and $V_0 \cong H^0 (C, E^\vee)^\vee$.
\end{enumerate}
\end{lemma}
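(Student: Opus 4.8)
The plan is to prove (a) first and then bootstrap (b) from it, treating global sections of $E^\vee$ as bundle maps to $\Oc$ throughout.

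For (a), I would view a section $s \in H^0(C, E^\vee)$ as a bundle homomorphism $s \colon E \to \Oc$ and show it cannot vanish at one point without vanishing identically. Precomposing with the surjection $\ev_{E,V}$ gives $s \circ \ev_{E,V} \colon \Oc \otimes V \to \Oc$, which, since $C$ is proper and connected, is a constant functional $\lambda \in V^\vee$; explicitly $\lambda(v) = s(p)\bigl(\ev_{E,V}(v)(p)\bigr)$ for any point $p$, because the global function $s \circ \ev_{E,V}(v)$ is constant. If $s$ vanishes at some $p$ then $s(p) = 0$, so $\lambda = 0$; but then $s$ kills the image of $\ev_{E,V}$, which is all of $E$, forcing $s = 0$. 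Hence the fibrewise map $H^0(C, E^\vee) \to E^\vee_p$ is injective for every $p$, which is precisely the assertion that $\ev_{E^\vee}$ is everywhere injective, and since $\dim E^\vee_p = r$ we obtain $h^0(C, E^\vee) \le r$.

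For (b), everywhere injectivity from (a) means $\Oc \otimes H^0(C, E^\vee) \hookrightarrow E^\vee$ is a subbundle inclusion, so its dual $(\ev_{E^\vee})^\vee \colon E \to \Oc \otimes H^0(C, E^\vee)^\vee$ is a surjective bundle map; I set $E_1 = \Ker (\ev_{E^\vee})^\vee$ to obtain the displayed sequence. The crux is the splitting. Transposing $\ev_{E,V}$ gives an everywhere-injective $E^\vee \hookrightarrow \Oc \otimes V^\vee$, so on sections the natural pairing map $H^0(C, E^\vee) \to V^\vee$ is injective; dualising, $V \to H^0(C, E^\vee)^\vee$ is surjective. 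Choosing $v_1, \dots, v_k \in V$ (with $k = h^0(C, E^\vee)$) mapping to the basis of $H^0(C, E^\vee)^\vee$ dual to a basis $s_1, \dots, s_k$ of $H^0(C, E^\vee)$, the $v_j$ define a bundle map $\Oc \otimes H^0(C, E^\vee)^\vee \to E$; a fibrewise computation shows its composite with $(\ev_{E^\vee})^\vee$ is the constant matrix $\bigl( \langle s_i, v_j \rangle \bigr) = \Iden$, so this map is the desired splitting.

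It remains to identify $E_1$ and decompose $V$. Dualising the split sequence gives $0 \to \Oc \otimes H^0(C, E^\vee) \xrightarrow{\ev_{E^\vee}} E^\vee \to E_1^\vee \to 0$, and since $\ev_{E^\vee}$ induces the identity on $H^0(C, E^\vee)$, the long exact sequence forces $h^0(C, E_1^\vee) = 0$. For ampleness I would apply the criterion \cite[Theorem 2.4]{Har71}: if some quotient bundle $Q$ of $E_1$ had $\deg Q \le 0$, then $Q$ would also be a quotient of the globally generated $E$, hence globally generated, hence $\deg Q \ge 0$; so $\deg Q = 0$, and a globally generated degree-zero bundle on $C$ is trivial, $Q \cong \Oc^{\oplus m}$ with $m \ge 1$, producing a nonzero section of $E_1^\vee$, contradicting $h^0(C, E_1^\vee) = 0$. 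Thus $E_1$ is ample. Finally, applying $H^0$ to the split sequence yields $H^0(C, E) = H^0(C, E_1) \oplus H^0(C, E^\vee)^\vee$; intersecting with $V$ and using surjectivity of $V \to H^0(C, E^\vee)^\vee$ gives $V = V_1 \oplus V_0$ with $V_1 = V \cap H^0(C, E_1)$ and $V_0 = \Span(v_1, \dots, v_k) \cong H^0(C, E^\vee)^\vee$. The main obstacle is the splitting step: extracting, from mere global generation of $(E, V)$, a pairing nondegenerate enough to produce a genuine bundle-level retraction rather than just a splitting on global sections.
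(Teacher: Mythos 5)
Your proof is correct and takes essentially the same route as the paper's: part (a) by composing with the surjection $\ev_{E,V}$ and exploiting constancy of maps between trivial bundles on the proper connected curve $C$ (the paper packages this as triviality of $\Ker(\eta)$ together with $h^0 = 0$ for the kernel bundle $M_{E^\vee}$), and part (b) by deducing surjectivity of the pairing $V \to H^0(C, E^\vee)^\vee$ and lifting a dual basis $v_1, \dots, v_k$ to split the sequence, which is exactly the paper's splitting $\ev_{E,V} \circ \left( \tau|_{\Oc \otimes V_0} \right)^{-1}$ written in explicit coordinates, followed by the same dualise-and-take-$H^0$ argument for $h^0(C, E_1^\vee) = 0$. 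The only (cosmetic) divergence is the ampleness step, where you realise quotients of $E_1$ as quotients of the globally generated $E$ via the splitting, while the paper instead shows by the snake lemma that $V_1$ generates $E_1$; both arguments show every quotient bundle of $E_1$ is generated of positive degree, so \cite[Theorem 2.4]{Har71} applies either way.
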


\begin{proof}
Using the evaluation maps $\ev_{E, V}$ and $\ev_{E^\vee}$, we obtain a composed map
\[
\Oc \otimes H^0 (C, E^\vee ) \ \xrightarrow{\ev_{E^\vee}} \ E^\vee \ \xrightarrow{\left( \ev_{E, V} \right)^\vee} \ \Oc \otimes V^\vee ,
\]
which we denote by $\eta$. Clearly $\Image (\eta)$ and $\Ker (\eta)$ are trivial bundles. Moreover, as $(E, V)$ is generated, $\ev_{E, V}^\vee$ is injective; whence
\[
\Ker (\eta) \ = \ \Ker \left( \ev_{E^\vee} \right) .
\]
But this is exactly the kernel bundle $M_{E^\vee}$ of $(E^\vee , H^0 (E^\vee))$; cf.\ (\ref{DSBseq}). In particular, $h^0 ( C, \Ker(\eta) ) = 0$. As we have seen that $\Ker (\eta)$ is a trivial bundle, we have $\Ker (\eta) = 0$, whence $\ev_{E^\vee}$ is injective. Statement (a) follows.

For the rest: By part (a), the composition
\[
\tau \ := \ {^t\eta} \ = \ \left( \ev_{E^\vee} \right)^\vee \circ \ev_{E, V} \colon \Oc \otimes V \ \to \ \Oc \otimes H^0 (C, E^\vee )^\vee
\]
is a surjective map of trivial vector bundles. Let $V_1 \subseteq V$ be the subspace such that $\Ker (\tau) = \Oc \otimes V_1$. Now there is an exact diagram
\begin{equation} \label{evDiagram}
\xymatrix{
\Oc \otimes V_1 \ar[r] \ar[d] & \Oc \otimes V \ar[r] \ar[d]_{\ev_{E, V}} \ar[dr]^\tau & \Oc \otimes H^0 (C, E^\vee )^\vee \ar[d]^\Iden \\
 E_1 \ar[r] & E \ar[r]^-{\left( \ev_{E^\vee} \right)^\vee} & \Oc \otimes H^0 (C, E^\vee )^\vee
}
\end{equation}
where $E_1 := \Ker \left( \left( \ev_{E^\vee} \right)^\vee \right)$. From (a) it follows that $\left(\ev_{E^\vee} \right)^\vee$ is a surjective vector bundle map, and so $E_1$ is a vector subbundle of $E$ (that is, saturated). 
 Let $V_0$ be any subspace of $V$ of dimension $h^0 (C, E^\vee )$ which is complementary to $V_1$. Then
\[
\tau|_{\Oc \otimes V_0} \colon \Oc \otimes V_0 \ \to \ \Oc \otimes H^0 (C, E^\vee )^\vee
\]
is an isomorphism, and $\ev_{E, V} \circ \left( \tau|_{\Oc \otimes V_0} \right)^{-1}$ defines a splitting of the lower row of (\ref{evDiagram}).  
 Dualising the lower row and taking global sections, we obtain the sequence
\[ 0 \ \to \ H^0 (C, E^\vee ) \ \xrightarrow{\sim} \ H^0 (C, E^\vee ) \to H^0 (C, E_1^\vee ) \ \to \ H^0 (C, E^\vee ) \otimes H^1 (C, \Oc ) \ \to \ \cdots \]
But since we have just seen that the extension is trivial, the coboundary map is zero and $h^0 (C, E_1^\vee ) = 0$. 

Next: As $\ev_{E, V}$ is surjective, by the snake lemma the vertical map $\Oc \otimes V_1 \to E_1$ has zero cokernel. Thus $V_1$ generates $E_1$.

For ampleness: As $E_1$ is generated, every quotient of $E_1$ is generated and has nonnegative degree. As $h^0 (C, E_1^\vee ) = 0$, in fact every quotient has positive degree. Thus by \cite[Theorem 2.4]{Har71}, the bundle $E_1$ is ample (here we use the fact that the base field is $\C$).

Lastly, if $s \in V \setminus V_1$ then $s$ has nonzero image in $V/V_1 = H^0 (C, E^\vee)^\vee$. Therefore, $s$ takes values outside $E_1 = \Ker \left( E \to \Oc \otimes H^0 (C, E^\vee )^\vee \right)$ at a general point of $C$. We deduce that $V_1 = V \cap H^0 (C, E_1 )$. This completes the proof of (b).
\end{proof}

The following notation introduced in \cite{CH} will be useful.

\begin{definition}
Let $(F, W)$ be a generated coherent system with $\deg F > 0$. We set
\[
\lambda (F, W) \ := \ \frac{\deg F}{\dim W - \rank F} .
\]
This number will be called the \textsl{linear slope} of $(F, W)$. We abbreviate $\lambda (F, H^0 (C, F))$ to $\lambda (F)$. Note that $\lambda (F, W)$ is precisely the slope of the dual span bundle $\mfwd$.
\end{definition}

\begin{corollary} \label{LinSlopeEquality}
Let $(F, W)$ be a generated coherent system of type $(r_F, d_F, n_F)$. Suppose that $d_F > 0$. Then for any $t \ge 0$, we have
\begin{equation} \label{lambdaAmple}
\lambda (F, W) \ = \ \lambda \left( F \oplus \cO_C^{\oplus t} , W \oplus H^0 (C, \Oc^{\oplus t}) \right) .
\end{equation}
In particular, if $(E, V)$ is generated of type $(r, d, n)$ with $d > 0$, then $\lambda (E, V) = \lambda (E_1, V_1)$, where $E_1$ is the ample summand of $E$ and $V_1 = V \cap H^0 (C, E_1)$ is as in Lemma \ref{StructureGeneratedCohSys}.
\end{corollary}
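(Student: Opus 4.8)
The plan is to establish the first equality (\ref{lambdaAmple}) by unwinding the definition of the linear slope on both sides, and then to deduce the ``in particular'' clause by applying that equality to the canonical splitting provided by Lemma \ref{StructureGeneratedCohSys}(b).

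For the first statement, I would compute numerator and denominator of $\lambda\left(F \oplus \cO_C^{\oplus t}, W \oplus H^0(C, \cO_C^{\oplus t})\right)$ separately. The numerator is immediate: the trivial bundle has degree zero, so $\deg\left(F \oplus \cO_C^{\oplus t}\right) = \deg F$. For the denominator, the two relevant quantities each increase by exactly $t$ and therefore cancel: on the one hand $\rank\left(F \oplus \cO_C^{\oplus t}\right) = \rank F + t$, while on the other hand, since $C$ is a connected smooth projective curve we have $h^0(C, \cO_C) = 1$, whence $\dim H^0(C, \cO_C^{\oplus t}) = t$ and $\dim\left(W \oplus H^0(C, \cO_C^{\oplus t})\right) = \dim W + t$. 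Thus the denominator equals $(\dim W + t) - (\rank F + t) = \dim W - \rank F$, which is nonzero by the hypothesis $d_F > 0$ (so that $\lambda$ is defined). This gives (\ref{lambdaAmple}) directly.

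For the ``in particular'' clause, I would invoke Lemma \ref{StructureGeneratedCohSys}(b), which furnishes a split exact sequence exhibiting $E \cong E_1 \oplus \left(\cO_C \otimes H^0(C, E^\vee)^\vee\right)$ with $E_1$ ample, together with a decomposition $V = V_1 \oplus V_0$ where $V_1 = V \cap H^0(C, E_1)$ and $V_0 \cong H^0(C, E^\vee)^\vee$. Setting $t := h^0(C, E^\vee)$, the trivial summand is isomorphic to $\cO_C^{\oplus t}$, and under this isomorphism $V_0$ is identified with $H^0(C, \cO_C^{\oplus t})$, since by construction (where $\tau$ restricts to an isomorphism on $\cO_C \otimes V_0$) the subspace $V_0$ maps isomorphically onto the global sections of the trivial summand. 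Hence $(E, V) \cong \left(E_1 \oplus \cO_C^{\oplus t}, V_1 \oplus H^0(C, \cO_C^{\oplus t})\right)$ as generated coherent systems. Since the trivial summand contributes degree zero, $\deg E_1 = \deg E = d > 0$, so $(E_1, V_1)$ meets the hypothesis of the first part; applying (\ref{lambdaAmple}) with $F = E_1$ and $W = V_1$ then yields $\lambda(E, V) = \lambda(E_1, V_1)$.

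The first part is essentially a one-line computation and presents no real obstacle. The only point requiring slight care is the identification of $V_0$ with $H^0(C, \cO_C^{\oplus t})$ in the second part, but this is precisely the content of the splitting in Lemma \ref{StructureGeneratedCohSys}(b); once it is in hand, the result follows formally from the first statement.
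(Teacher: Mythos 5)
Your proposal is correct and follows essentially the same route as the paper: the identity (\ref{lambdaAmple}) by direct cancellation of the $t$'s in the denominator, followed by applying it to the splitting $(E, V) \cong \left( E_1 \oplus \cO_C^{\oplus t}, V_1 \oplus H^0 (C, \cO_C^{\oplus t}) \right)$ from Lemma \ref{StructureGeneratedCohSys}(b). Your extra care in checking $\deg E_1 = d > 0$ and the identification of $V_0$ with the sections of the trivial summand only makes explicit what the paper leaves implicit.
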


\begin{proof}
We compute:
\[
\lambda \left( F \oplus \Oc^{\oplus t} , W \oplus H^0 \left( C, \Oc^{\oplus t} \right) \right) \ = \ \frac{\deg F}{(\dim W + t) - (\rank F + t)} \ = \ \lambda (F, W) .
\]
For the rest: By Lemma \ref{StructureGeneratedCohSys} (b), we have
\[
(E, V) \ \cong \ \left( E_1 \oplus \left(\Oc \otimes H^0 (C, E^\vee )^\vee \right) , ( V \cap H^0 (C, E_1) ) \oplus H^0 (C, E^\vee)^\vee \right) 
\]
By (\ref{lambdaAmple}), we obtain $\lambda (E, V) \ = \ \lambda (E_1, V_1)$ as desired.
\end{proof}

\section{Linear stability of scrolls and coherent systems} \label{Characterisation}

Linear stability of a subvariety $X \subset \PP^{n-1}$ was defined in the introduction. Let us now suppose $X$ to be a scroll given as the image of $\pe \to \pv$ where $E \to C$ is a vector bundle of positive degree and $V \subseteq H^0 ( \pe, \opeo )$ a generating subspace. In this section, we will give criteria for linear stability of $X$ in terms of defect of certain secants, and in terms of sections of elementary transformations of $E$.

For a subspace $W \subset V$, intuitively, $p_W ( X )$ is expected to have small reduced degree if the centre of projection $\PP W^\perp$ is a low-dimensional subspace containing many points of $X$; compare with the discussion on \cite[p.\ 56]{Mum}. Thus a linearly stable variety is expected not to have secants with large defect. However, any scroll $\pe$ contains $\pi$-defective subschemes (Definition \ref{DefnPiNondef}) with arbitrarily large defect which do not reflect any aspect of the global geometry of $\pe$; they occur for any $E$. One of our observations below is that linear semistability can be read off the behaviour of secants with ``honest'' defect; more precisely, secants which are defective but not $\pi$-defective.

\subsection{Characterisations of linear stability for scrolls}

\begin{proposition} \label{LinSstCrit} Let $E \to C$ be a vector bundle of degree $d$ and rank $r$. Let $\pi \colon \pe \to C$ be the projection. Let $V \subseteq H^0 (C, E)$ be a generating subspace of dimension $n$, and $\psi \colon \pe \to \pv$ the associated map. Then the following are equivalent.
\begin{enumerate}
\renewcommand{\labelenumi}{(\arabic{enumi})}
\item The variety $\psi (\pe)$ is Mumford linearly semistable in $\pv$.
\item For all elementary transformations $F \subset E$ with $\deg F > 0$ such that $V \cap H^0 (C, F)$ generates $F$, we have
\begin{equation} \label{TwoIneq}
\lambda \left( F, V \cap H^0 (C, F) \right) \ \ge \ \lambda (E, V) .
\end{equation}
\item For all $\pi$-nondefective length zero subschemes $Z \subset \pe$ such that $\Span \, \psi(Z)$ has dimension at most $n - r - 2$ and does not intersect a general fibre $\psi \left( \pe|_p \right)$, we have
\begin{equation} \label{ThreeIneq}
\frac{\length Z}{\dim \Span \psi (Z) + 1} \ \le \ \frac{d}{n - r} .
\end{equation}
\end{enumerate}
The equivalence also holds if we replace ``semistable'' with ``stable'' in $(1)$ and inequalities (\ref{TwoIneq}) and (\ref{ThreeIneq}) are made strict.
\end{proposition}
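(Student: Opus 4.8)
The plan is to prove the chain $(1) \Leftrightarrow (2) \Leftrightarrow (3)$, handling the semistable and stable versions in parallel: every step proceeds by clearing positive denominators, which preserves strict inequalities, so the strict statement follows by the same argument with $\le$ replaced by $<$ throughout.

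The bridge for $(1) \Leftrightarrow (2)$ is the observation that projecting $X = \psi(\pe)$ from a centre is the same as restricting the linear system. For a subspace $W \subseteq V$ with centre $\PP W^\perp$, the composite $p_W \circ \psi$ coincides with the rational map $\psi_W \colon \pe \dashrightarrow \pw$ defined directly by $W \subseteq H^0(C,E)$. By Lemma \ref{DegreeProperties}(b), $\psi_W$ factors through a morphism on $\PP E_W^\vee$, where $E_W$ is the subsheaf generated by $W$, and $p_W(X) = \psi_W(\pe)$ has dimension $\rank E_W$ when $\deg E_W > 0$ (and dimension $0$ otherwise). Hence Mumford's requirement that $p_W(X)$ have dimension $r$ translates exactly into $\rank E_W = r$ and $\deg E_W > 0$; that is, $F := E_W$ is an elementary transformation of positive degree generated by $W$. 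Writing $\delta = \deg(p_W|_X)$ and applying Lemma \ref{DegreeProperties}(c) to $(E,V)$ (giving $(\deg\psi)\deg X = d$) and to $(E_W, W)$ (giving $(\deg\psi_W)\deg p_W(X) = \deg E_W$), together with $\deg\psi_W = (\deg\psi)\,\delta$, I can cancel $\deg\psi$ and rewrite $\reddeg X \le \delta\cdot\reddeg p_W(X)$ as
\[
\frac{d}{n-r} \ \le \ \frac{\deg E_W}{\dim W - r} \ = \ \lambda(E_W, W),
\]
which is inequality (\ref{TwoIneq}) for $F = E_W$. To turn this into both implications, I would test only the largest $W$ generating a given $F$: replacing $W$ by $V \cap H^0(C,F)$ keeps $E_W = F$ fixed while only increasing $\dim W$, hence only decreasing $\lambda(E_W,W)$, so the inequality is most binding precisely for $W = V \cap H^0(C,F)$ as in $(2)$. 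Concretely, $(1)\Rightarrow(2)$ is immediate by taking $W = V\cap H^0(C,F)$, while for $(2)\Rightarrow(1)$, given arbitrary $W$ with $\dim p_W(X) = r$ I set $F = E_W$, enlarge $W$ to $W' = V\cap H^0(C,F)$ (noting $E_{W'} = F$ still), apply $(2)$ to $F$, and use $\dim W' \ge \dim W$ to recover Mumford's inequality for $W$.

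For $(2) \Leftrightarrow (3)$, I would pass between elementary transformations and subschemes using Theorem \ref{F=EZ}: every $F \subset E$ of positive degree equals $E^Z$ for a $\pi$-nondefective $Z$, and conversely each $\pi$-nondefective $Z$ yields $F = E^Z$ with $\length Z = d - \deg F$. By Proposition \ref{SpanZ}(a), $\dim\Span\psi(Z) + 1 = n - \dim\bigl(V \cap H^0(C,F)\bigr)$, so the constraint $\dim\Span\psi(Z) \le n-r-2$ is equivalent to $\dim(V\cap H^0(C,F)) \ge r+1$, exactly the condition making $\lambda(F, V\cap H^0(C,F))$ finite with $\rank F = r$. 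Substituting $\length Z = d - \deg F$ and clearing the positive denominators, a short computation shows that (\ref{ThreeIneq}) is literally the same inequality as (\ref{TwoIneq}).

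The main obstacle I anticipate is matching the two admissibility side-conditions: that $V\cap H^0(C,F)$ generates $F$ in $(2)$ should correspond to $\Span\psi(Z)$ not meeting a general fibre $\psi(\pe|_p)$ in $(3)$. I would establish this via the factorisation above together with Lemma \ref{DegreeProperties}(a): setting $W_F = V\cap H^0(C,F)$, the scroll map $\PP F^\vee \to \PP W_F^\vee$ is base point free iff $W_F$ generates $F$, while a point of $\psi(\pe|_p)$ lies on $\Span\psi(Z) = \PP W_F^\perp$ precisely when the corresponding functional annihilates $W_F$, i.e.\ when $W_F$ fails to generate $F|_p = E|_p$ in that direction. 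Since $F = E^Z$ agrees with $E$ away from finitely many points, the behaviour on a general fibre detects generation, giving the equivalence of the two families of admissible centres. Once this geometric correspondence is in place, the numerical identity of the previous paragraph yields $(2) \Leftrightarrow (3)$, completing the chain in both the non-strict and strict forms.
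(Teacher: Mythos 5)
Your bridge $(1) \Leftrightarrow (2)$ is essentially correct and runs on the same ingredients as the paper's (the paper proves the cycle $(1)\Rightarrow(2)\Rightarrow(3)\Rightarrow(1)$, with the same degree bookkeeping via Lemma \ref{DegreeProperties}; your device of enlarging $W$ to $V \cap H^0(C, E_W)$, which fixes $E_W$ and can only decrease $\lambda$, is a clean way to pass between arbitrary admissible centres and the maximal section spaces appearing in $(2)$). The genuine gap is in $(2)\Rightarrow(3)$, exactly at the point you flag as the main obstacle: the claimed \emph{equivalence} between ``$W_F := V \cap H^0(C, E^Z)$ generates $F = E^Z$'' and ``$\Span\psi(Z)$ avoids a general fibre'' is false in the direction you need. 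Avoiding a general fibre says only that $W_F$ generates $E|_p = F|_p$ at a \emph{general} point $p$, i.e.\ that $W_F$ \emph{generically} generates $F$; generation is a pointwise condition and can fail at finitely many special points, including points outside the support of $E/F$. Already in rank one: if all sections of $L$ in $V$ vanishing on $x+y$ also vanish at a third point $z$, then $Z$ lying over $x + y$ with $E^Z = L(-x-y)$ is $(3)$-admissible, but $V \cap H^0(C, L(-x-y))$ does not generate $L(-x-y)$, so hypothesis $(2)$ cannot be applied to $E^Z$ at all, and your numerical translation has nothing to feed on.

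The missing step (which is how the paper closes this) is to replace $E^Z$ by its subsheaf $E^Z_W$ generated by $W := V \cap H^0(C, E^Z)$. One checks: (i) $W \subseteq H^0(C, E^Z_W)$, so $V \cap H^0(C, E^Z_W) = W$ and $(E^Z_W, W)$ is a generated subsystem with the same $m := \dim W \ge r+1$; (ii) $\rank E^Z_W = r$ by the general-fibre condition, and $\deg E^Z_W > 0$, since a generated rank-$r$ bundle of degree $0$ is $\Oc^{\oplus r}$ with exactly $r$ sections, contradicting $h^0(C, E^Z_W) \ge m \ge r+1$; (iii) $\deg E^Z_W \le \deg E^Z = d - \length Z$ by $\pi$-nondefectivity. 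Applying $(2)$ to $E^Z_W$ then gives $(d - \length Z)/(m-r) \ge \deg E^Z_W/(m-r) \ge d/(n-r)$, and your cross-multiplication together with $\dim \Span \psi(Z) = n - m - 1$ (Proposition \ref{SpanZ}) converts this to (\ref{ThreeIneq}); strictness survives because the inequality $\deg E^Z_W \le d - \length Z$ enters on the favourable side. With this insertion your outline becomes a complete proof along the paper's lines. Two smaller inaccuracies: your parenthetical that $p_W(X)$ has dimension $0$ when $\deg E_W = 0$ is wrong (the image is a $\PP^{s-1}$, $s = \rank E_W$), and the assertion that the image has dimension $r$ whenever $\rank E_W = r$ and $\deg E_W > 0$ requires the paper's nondegeneracy argument excluding the case $E_W \cong \Oc^{\oplus r}$ with $\dim W = r$; neither affects the structure of your argument.
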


\begin{remark}
Notice that the number $\frac{d}{n-r}$ is exactly
\[
\frac{\length Z_0}{\dim \Span \psi (Z_0) + 1}
\]
for any $Z_0$ occurring as $\pe \times_{\pv} \PP W^\perp$ for a subspace $W$ of dimension $r$; that is, such that $\PP W^\perp$ computes $\deg \psi (\pe) \cdot \deg \psi$. Thus condition (3) can also be viewed as a slope inequality. (Note however that if $\dim W = r$ then $\dim p_W (\pe) < r$.)
\end{remark}

\begin{proof}
$(1) \Rightarrow (2)$: Let $F \subset E$ be an elementary transformation as stated. Write
\[
W \ := \ V \cap H^0 ( C, F ) .
\]
By Lemma \ref{DegreeProperties} (b) the map $p_W \circ \psi$ factorises via a morphism $\psi' \colon \PP F^\vee \to \PP W^\vee$. Now $\psi'$ is linear and hence injective on fibres, so the image has dimension at least $r - 1$. If it is not of dimension $r$, then by linearity of $\psi$ on fibres the image of $\psi'$ is a $\PP^{r-1}$. As $\psi ( \pe )$ is nondegenerate in $\pv$, also $\psi' ( \PP F^\vee )$ is nondegenerate in $\PP W^\vee$, whence $W$ has dimension $r$. But the only generated rank $r$ bundle with exactly $r$ sections is the trivial bundle $\Oc^{\oplus r}$. As we have assumed that $\deg F > 0$, this is excluded. We conclude that $p_W ( \psi (\pe) )$ has dimension $r$.

Next, write $\gamma := \deg \psi$ and $\delta := \deg p_W|_{\psi(\pe)}$. 
 By Lemma \ref{DegreeProperties} (b) and (c) we have
\[
\gamma \cdot \deg \psi ( \pe ) \ = \ d \quad \hbox{and} \quad \gamma \cdot \delta \cdot \deg \, p_W ( \pf ) \ = \ \deg F .
\]
As by hypothesis $(\pe, \pv)$ is linearly semistable, we have as desired
\[
\frac{\deg F}{\dim W - r} \ = \ \gamma \cdot \delta \cdot \reddeg p_W (\pf) \ \ge \ \gamma \cdot \reddeg (\pe) \ = \ \frac{d}{n - r} .
\]

$(2) \Rightarrow (3)$: Let $Z \subset \pe$ be a $\pi$-nondefective subscheme of length $e$ whose linear span is of dimension at most $n - r - 2$ and does not intersect a generic fibre of $\psi ( \pe )$. As in {\S} \ref{Zet}, write $E^Z := \pi_* \Iz ( 1 )$, a full rank subsheaf of $E$. Set
\[
W \ := \ V \cap H^0 (\pe , \Iz (1)) \ = \ V \cap H^0 \left( C, E^Z \right) .
\]
Now we have a diagram of sheaves
\begin{equation} \label{TwoImpliesThreeDiag} \xymatrix{
 & E^\vee \ar[d] \ar[r] & \left( E^Z \right)^\vee \ar[d] \\
 \Oc \otimes W^\perp \ar[r] & \Oc \otimes V^\vee \ar[r] & \Oc \otimes W^\vee .
} \end{equation}
By (\ref{DefnPsi}) and the discussion before it, we obtain correspondingly a diagram of varieties
\[ \xymatrix{
& \pe \ar@{-->}[r] \ar[d]^\psi & \PP \left( E^Z \right)^\vee \ar@{-->}[d] \\
\PP W^\perp \ar[r] & \pv \ar@{-->}[r] & \pw .
}\]
By Proposition \ref{SpanZ} (a), we have $\Span \, \psi ( Z ) = \PP W^\perp$. Therefore, since by hypothesis $\dim W^\perp \le n - r - 1$, we obtain $\dim W \ge r + 1$. Furthermore, as we have assumed that $\PP W^\perp$ does not meet a general fibre of $\pe$, the map $E^\vee|_x \to W^\vee$ is injective for a general $x \in C$. Thus $W$ is a generically generating subspace of $H^0 ( C, E^Z )$. Denote by $E^Z_W$ the subsheaf of $E^Z$ generated by $W$. As $h^0 (C, E^Z_W) \ge \dim W \ge r + 1$, we have $\deg E^Z_W > 0$.
 We now write
\[
m \ := \ \dim W \ = \ \dim \left( V \cap H^0 \left( C, E^Z \right) \right) \ = \ \dim \left( V \cap H^0 \left( C, E^Z_W \right) \right) .
\]
By $\pi$-nondefectivity $\deg E^Z = d - e$, and clearly $\deg E^Z_W \le d - e$. Applying hypothesis (2) to the elementary transformation $E^Z_W \subset E$, we obtain
\[
\frac{d-e}{m - r} \ \ge \ \frac{\deg E^Z_W}{m - r} \ \ge \ \frac{d}{n - r} .
\]
One computes that this is equivalent to $\frac{d}{n - r} \ge \frac{e}{n - m}$. 
 As $\Span \psi ( Z ) = \PP W^\perp$, in particular $\dim \Span \psi ( Z ) = n - m - 1$, and the above inequality becomes $\frac{d}{n-r} \ge \frac{\length Z}{\dim \Span \psi (Z) + 1}$ as desired.

$(3) \Rightarrow (1)$: Let $W \subseteq V$ be a subspace of dimension $m$ such that $p_W ( \pe )$ has dimension $r$. As $\psi$ is linear on fibres, the latter implies that
\begin{equation} \label{IntersEmpty}
\hbox{$\PP W^\perp \cap \psi ( \pe|_p )$ is empty for generic $p \in C$.}
\end{equation}
It follows that $W$ generically generates $E$. Let $E_W \subset E$ be the elementary transformation generated by $W$. Suppose $\deg E_W = d - e$. By Theorem \ref{F=EZ}, we may choose a $\pi$-nondefective subscheme $Z \subset \pe$ of length $e$ such that $E_W = E^Z$; and moreover, the space $\Span \psi (Z)$ is independent of the choice of $Z$ with this property and coincides with $\PP W^\perp$.

Now since $p_W ( \pe )$ has dimension $r$, we have $m \ge r + 1$. Therefore, $W^\perp$ has dimension $n - m \le n - r - 1$. By the last paragraph,
\begin{equation} \label{DimSpanSmallEnough}
\dim \Span \psi (Z) \ = \ n - m - 1 \ \le \ n - r - 2 .
\end{equation}
By (\ref{IntersEmpty}), (\ref{DimSpanSmallEnough}) and hypothesis (3), we obtain $\frac{d}{n - r} \ge \frac{e}{n - m}$. As stated above, this is equivalent to $\frac{d - e}{m - r} \ge \frac{d}{n - r}$. Now
\[
d \ = \ \deg E \ = \ ( \deg \psi ) \cdot \deg \left( \psi ( \pe ) \right)
\]
by Lemma \ref{DegreeProperties} (c). Moreover, by Lemma \ref{DegreeProperties} (b) we have a diagram
\[ \xymatrix{
\pe \ar@{-->}[r] \ar[d]_\psi & \PP E_W^\vee \ar[d]^{\tilde{\psi}} \\
\pv \ar@{-->}[r]_{p_W} & \pw
} \]
where $\tilde{\psi}$ is a morphism. By commutativity of this diagram and by Lemma \ref{DegreeProperties} (c), we obtain
\[
d - e \ = \ \deg E_W \ = \ (\deg \tilde{\psi} ) \cdot \deg \tilde{\psi} \left( \PP E_W^\vee \right) \ = \ \deg \left( p_W|_{\pe} \right) \cdot ( \deg \psi ) \cdot \left( \deg \, \psi ( \pe ) \right) .
\]
Putting these facts together, we obtain
\[
\frac{(\deg \psi) \cdot \deg \left( p_W|_{\pe} \right) \cdot \deg \left( \psi ( \pe ) \right)}{m - r} \ \ge \ \frac{(\deg \psi) \cdot \deg \left( \psi ( \pe ) \right)}{n - r} ;
\]
which yields $\reddeg p_W ( \psi ( \pe )) \ge \reddeg \psi ( \pe )$. We conclude that $\psi (\pe)$ is linearly semistable.

The last statement follows by changing ``$\ge$'' to ``$>$'' in the appropriate places.
\end{proof}

\noindent The following slight refinement of the above proposition will be useful in {\S} \ref{ExistenceLinSt}.

\begin{lemma} \label{CurvilSuffices}
Condition (3) of Proposition \ref{LinSstCrit} holds if and only if (\ref{ThreeIneq}) is satisfied for all \emph{curvilinear} $Z$ with the stated properties.
\end{lemma}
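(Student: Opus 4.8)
The plan is to reduce the general (possibly non-curvilinear) case of condition~(3) to the curvilinear case, exploiting the fact established in Theorem~\ref{F=EZ} that every elementary transformation of $E$ is realised by a $\pi$-nondefective \emph{curvilinear} subscheme. Since the curvilinear version of (3) is a special case of (3), only the nontrivial implication requires work: assuming (\ref{ThreeIneq}) holds for all curvilinear $Z$ with the stated properties, I must deduce it for an \emph{arbitrary} $\pi$-nondefective $Z$ with $\dim \Span \psi(Z) \le n - r - 2$ not meeting a general fibre.

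First I would pass from the subscheme $Z$ to its associated elementary transformation $E^Z = \pi_* \cI_Z(1) \subset E$, as in {\S}\ref{Zet}. The key observation, already used in the proof of Proposition~\ref{LinSstCrit}, is that by Proposition~\ref{SpanZ}(b) the span $\Span \psi(Z)$ depends only on $E^Z$ as a subsheaf of $E$, not on the particular $Z$ realising it. Concretely, $\Span\psi(Z) = \PP W^\perp$ where $W = V \cap H^0(C, E^Z)$; and by $\pi$-nondefectivity, $\length Z = d - \deg E^Z$ is also determined by $E^Z$. Thus both the numerator and the denominator of the left-hand side of (\ref{ThreeIneq}) are invariants of the elementary transformation $E^Z$ alone.

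Next I would invoke Theorem~\ref{F=EZ} applied to $[E^Z \subset E] \in \Quot^e(E)$, where $e = \length Z$: this produces a $\pi$-nondefective \emph{curvilinear} $Z'$ with $E^{Z'} = E^Z$. By the invariance just noted, $Z'$ satisfies $\Span\psi(Z') = \Span\psi(Z) = \PP W^\perp$, has the same length $e$, is likewise $\pi$-nondefective, and (since the span is unchanged) also fails to meet a general fibre while having $\dim\Span\psi(Z') \le n-r-2$. Hence $Z'$ is an admissible curvilinear subscheme for the hypothesis, so (\ref{ThreeIneq}) holds for $Z'$; and since the ratio $\tfrac{\length Z}{\dim\Span\psi(Z)+1}$ equals $\tfrac{\length Z'}{\dim\Span\psi(Z')+1}$, the inequality transfers verbatim to $Z$. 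The same argument with strict inequality handles the stable case.

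I do not expect a serious obstacle here: the entire content is packaged into the two earlier results, Proposition~\ref{SpanZ}(b) (span is a function of the subsheaf) and Theorem~\ref{F=EZ} (every elementary transformation has a curvilinear $\pi$-nondefective realisation). The only point deserving care is verifying that all three defining conditions of (3) --- $\pi$-nondefectivity, the dimension bound on the span, and non-intersection with a general fibre --- are genuinely invariants of $E^Z$ and therefore inherited by $Z'$. The first two are immediate from the formulas $\length Z = d - \deg E^Z$ and $\Span\psi(Z) = \PP(V\cap H^0(C,E^Z))^\perp$; the fibre condition is likewise determined by $\PP W^\perp$, hence by $W$, hence by $E^Z$. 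Once this bookkeeping is confirmed the proof is essentially a one-line appeal to the two cited results.
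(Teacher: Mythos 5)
Your proposal is correct and follows essentially the same route as the paper: pass to the elementary transformation $E^Z$, invoke Theorem \ref{F=EZ} to obtain a $\pi$-nondefective curvilinear $Z'$ with $E^{Z'} = E^Z$, and use Proposition \ref{SpanZ}(b) together with $\length Z = \deg E - \deg E^Z$ to see that the ratio in (\ref{ThreeIneq}), and all the admissibility conditions, transfer between $Z$ and $Z'$. Your extra bookkeeping on the fibre-intersection condition (which the paper leaves implicit) is a welcome touch, but otherwise this matches the paper's argument line for line.
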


\begin{proof}
Let $Z \subset \pe$ be a arbitrary $\pi$-nondefective subscheme. By Theorem \ref{F=EZ}, there exists a $\pi$-nondefective curvilinear $Z'$ such that $E^Z = E^{Z'}$. By $\pi$-nondefectivity,
\[
\length Z' \ = \ \deg E - \deg E^{Z'} \ = \  \deg E - \deg E^Z \ = \ = \length Z .
\]
Moreover, $\Span \psi (Z') = \Span \psi (Z)$ by Proposition \ref{SpanZ} (b). Therefore,
\[
\frac{\length Z}{\dim \Span \psi (Z) + 1} \ = \ \frac{\length Z'}{\dim \Span \psi (Z') + 1} .
\]
Thus $Z$ satisfies (\ref{ThreeIneq}) if and only if $Z'$ does. The lemma follows.
\end{proof}

We mention a useful special case, which generalises \cite[Proposition 8.1]{MS} to the case of a hypersurface scroll, and gives a partial converse to \cite[Proposition 2.4]{BT}.

\begin{corollary} \label{LinStHypersurface} Suppose $n = r + 2$, so that $\pe$ is a hypersurface in $\PP^{n-1}$. Then $( E, V )$ is linearly semistable if and only if $\length Z \le \frac{\deg E}{2}$ for any $\pi$-nondefective $Z \subset \pe$ which is contracted by $\psi$. In particular, this applies if $\mult_{\psi(p)} \left( \psi ( \pe ) \right) \le \frac{\deg E}{2}$ for all $p \in \pe$. 
 Moreover, $( E, V )$ is linearly stable if and only if inequality is strict for all such $Z$. \end{corollary}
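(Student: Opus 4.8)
The plan is to read this off Proposition~\ref{LinSstCrit}(3) by specialising to $n = r+2$. First I would observe that the hypothesis $\dim \Span \psi(Z) \le n - r - 2 = 0$ forces $\Span \psi(Z)$ to be a single point $q$ (as $Z$ is nonempty), so that the $Z$ being tested are precisely the $\pi$-nondefective subschemes contracted by $\psi$ to a point. Since then $\dim \Span \psi(Z) + 1 = 1$ and $n - r = 2$, inequality~(\ref{ThreeIneq}) reads exactly $\length Z \le \frac{\deg E}{2}$. Thus Proposition~\ref{LinSstCrit} yields linear semistability if and only if this bound holds for every such $Z$, and linear stability if and only if it holds strictly; this is the asserted equivalence, and by Lemma~\ref{CurvilSuffices} it is enough to test curvilinear $Z$.

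The delicate point, which I expect to be the main obstacle, is the clause in Proposition~\ref{LinSstCrit}(3) requiring that $\Span \psi(Z)$ not meet a general fibre $\psi(\pe|_p)$. For $\Span \psi(Z) = \{q\}$ this says that $q$ is not a common point of the fibre system, i.e.\ not a vertex of the scroll. Such vertices arise exactly when $E$ fails to be ample: by Lemma~\ref{StructureGeneratedCohSys}, a non-ample $E$ splits off a trivial summand, whose associated sections are $\pi$-nondefective but are contracted by $\psi$ to the vertex locus, lying on every fibre. Subschemes supported there are contracted and $\pi$-nondefective of arbitrary length, yet the projection from such a $q$ drops dimension and is therefore not tested by Mumford's criterion. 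Since $V$ generates $E$, the map $\psi$ is a linear embedding on each fibre, so any contracted positive-dimensional locus is a multisection collapsing to a vertex; away from the vertex locus $\psi$ has finite fibres. I would therefore interpret ``contracted by $\psi$'' as the honest contractions, those with $q$ off a general fibre, and verify that each such $Z$ satisfies the hypotheses of part~(3). When $E$ is ample the caveat is vacuous and one genuinely tests all contracted $Z$.

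For the sufficient condition via multiplicity, I would argue that an honestly contracted $Z$ is supported in the finite fibre $\psi^{-1}(q)$, so that $\length Z \le \length \psi^{-1}(q)$, and then bound the latter by $\mult_q(\psi(\pe))$ using the local structure of the hypersurface at $q$: a general line through $q$ meets $\psi(\pe)$ with multiplicity $\mult_q$ there, which controls how much of $\pe$ can collapse to $q$ (for $\psi$ birational; in general one tracks the factor $\deg \psi$ via Lemma~\ref{DegreeProperties}(c)). Hence $\mult_{\psi(p)}(\psi(\pe)) \le \frac{\deg E}{2}$ for all $p$ forces $\length Z \le \frac{\deg E}{2}$ for every honestly contracted $Z$, and semistability follows from part~(3). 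This gives a partial converse to \cite[Proposition~2.4]{BT}, which supplies the reverse implication that linear stability bounds the multiplicities of $\psi(\pe)$.
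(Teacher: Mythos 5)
Your proposal is correct and takes essentially the same route as the paper, which likewise deduces the corollary by specialising condition (3) of Proposition \ref{LinSstCrit} to $n - r = 2$: there $\dim \Span \psi(Z) \le n - r - 2 = 0$ forces $\psi(Z)$ to be a point, and (\ref{ThreeIneq}) becomes $\length Z \le \frac{d}{2}$, with the stable case obtained by making the inequality strict. Your additional care over the general-fibre clause (vertices of non-ample scrolls) and over tracking $\deg \psi$ in the multiplicity bound addresses points that the paper's terse proof passes over silently, and is consistent with its intended argument.
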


\begin{proof} Here $n - r = 2$. In view of condition (3) of Proposition \ref{LinSstCrit}, linear semistability of $(E, V)$ is equivalent to the condition that $\frac{d}{2} \le \length Z$ for all $\pi$-nondefective $Z$ such that $\Span \psi (Z)$ is of dimension zero; that is, $\psi ( Z )$ is a point. In particular, this follows if $\mult_{\psi(p)} \left( \psi ( \pe ) \right) \le \frac{\deg E}{2}$ for all $p \in \pe$. The proof of the condition for linear stability is identical. \end{proof}

\subsection{Coherent systems}

The first application of Proposition \ref{LinSstCrit} will be to characterise linear stability of a scroll $\pe \to \pv$ in terms of the coherent system $(E, V)$. Let us give some context. Let $L \to C$ be an ample line bundle and $V \subseteq H^0 (C, L)$ a generating subspace. Linear stability of the image of $C \to \pv$ has an equivalent definition in terms of the cohomology of subseries of $\PP V$. To our knowledge, this first appeared in \cite[Definition 2.1]{BS}, where it was used to study surfaces fibred over curves. This formulation has appeared subsequently in \cite{MS}, \cite{CT}, \cite{CMT} and elsewhere, and may be generalised to higher rank as follows.

In dealing with linear semistability we will require a more general notion of \textsl{subsystem} than is usual:

\begin{definition} \label{CohSubsys}
Let $(E, V)$ be a coherent system. A \textsl{coherent subsystem} of $(E, V)$ is a pair $(F, W)$ where $F \subseteq E$ is a subsheaf and $W \subseteq H^0 (C, F) \cap V$ is a subspace. Note that $F$ may be nonsaturated, and we may have $\rank F = \rank E$. A coherent subsystem $(F, W)$ is \textsl{generated} if it is generated as a coherent system; that is, the evaluation map $\Oc \otimes W \to F$ is surjective. 
\end{definition}

We can now recall the notion of linear semistability of a generated coherent system from \cite[Definition 1.2]{CH}.

\begin{definition} \label{DefnLinSstCohSys}
Let $(E, V)$ be a generated coherent system of type $(r, d, n)$ on $C$, where $r \ge 1$ and $d > 0$. Then $(E, V)$ is said to be \textsl{linearly semistable} if for all generated coherent subsystems $(F, W)$ where $\deg F > 0$ and $E/F$ is not a trivial sheaf, we have $\lambda (F, W) \ge \lambda (E, V)$; that is,
\begin{equation} \label{LinSstIneq}
\frac{\deg F}{\dim W - \rank F} \ \ge \ \frac{d}{n-r} .
\end{equation}
If inequality is strict for all such $(F, W)$, then $(E, V)$ is said to be \textsl{linearly stable}.
\end{definition}

\begin{remark}
For $\alpha$-semistability (Definition \ref{alphaSt}), the inequality (\ref{alphaSlopeIneq}) is only required for coherent subsystems $(F, W)$ where $F$ is a proper subbundle of $E$. However, linear stability depends strongly on the behaviour of nonsaturated subsheaves; in particular, elementary transformations of $E$ (cf.\ (2) in Proposition \ref{LinSstCrit}). This is why we must use a more general notion of subsystem. See also \cite[Remark 2.7]{BO}.
\end{remark}

We will now prove that linear (semi)stability of $(E, V)$ in the above sense is equivalent to Mumford linear (semi)stability of the image of $\pe \to \pv$. This is a slight modification of the claim made in \cite[Remark 5.18]{CHL}, and will justify the use of the term ``linearly (semi)stable'' for generated coherent systems.

\begin{theorem} \label{EquivDefns}
Let $(E, V)$ be a generated coherent system of type $(r, d, n)$ and $\psi \colon \pe \to \pv$ the associated map to projective space. Then the scroll $\psi ( \pe )$ is linearly (semi)stable in the sense of Definition \ref{MumfordLinSst} if and only if the coherent system $(E, V)$ is linearly (semi)stable in the sense of Definition \ref{DefnLinSstCohSys}.
\end{theorem}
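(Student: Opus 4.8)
The plan is to deduce Theorem~\ref{EquivDefns} directly from Proposition~\ref{LinSstCrit}, since that proposition already establishes the equivalence of Mumford linear semistability of $\psi(\pe)$ (its condition (1)) with the cohomological inequality in its condition (2). The essential task is therefore to reconcile condition (2) of Proposition~\ref{LinSstCrit}, which quantifies over \emph{elementary transformations} $F \subset E$ with $\deg F > 0$ such that $V \cap H^0(C,F)$ generates $F$, with the defining inequality of Definition~\ref{DefnLinSstCohSys}, which quantifies over all \emph{generated coherent subsystems} $(F,W)$ with $\deg F > 0$ and $E/F$ nontrivial. Both inequalities assert $\lambda(F, \cdot) \ge \lambda(E,V)$, so what must be checked is that the two families of test objects yield the same collection of inequalities.

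First I would observe that the test objects in Proposition~\ref{LinSstCrit}(2) are a special case of those in Definition~\ref{DefnLinSstCohSys}: an elementary transformation $F \subset E$ is in particular a full-rank subsheaf, and taking $W = V \cap H^0(C,F)$ with the hypothesis that $W$ generates $F$ gives a generated coherent subsystem. One checks that the side condition ``$E/F$ not a trivial sheaf'' in Definition~\ref{DefnLinSstCohSys} is compatible: for a full-rank $F$ with $\deg F > 0$, if $E/F$ were a trivial sheaf it would have to be the zero sheaf (a trivial sheaf is torsion-free but $E/F$ is torsion), forcing $F = E$, which is excluded since we test proper subsystems. Hence every inequality coming from Proposition~\ref{LinSstCrit}(2) is among those imposed by Definition~\ref{DefnLinSstCohSys}. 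This gives one implication immediately: if $(E,V)$ is linearly (semi)stable in the sense of Definition~\ref{DefnLinSstCohSys}, then condition (2) of Proposition~\ref{LinSstCrit} holds, so $\psi(\pe)$ is Mumford linearly (semi)stable.

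The substantive direction is the converse: assuming condition (2) of Proposition~\ref{LinSstCrit}, I must verify the inequality of Definition~\ref{DefnLinSstCohSys} for an \emph{arbitrary} generated coherent subsystem $(F,W)$ with $\deg F > 0$ and $E/F$ nontrivial, including those where $F$ has rank strictly less than $r$. The key reduction is: given such $(F,W)$, enlarge $F$ to the full-rank elementary transformation $\widetilde F := E^Z$ obtained as the subsheaf of $E$ consisting of all sections which, on each fibre, lie in the span of $W$ — more precisely, replace $F$ by the elementary transformation of $E$ generated by $W$ \emph{together with} a generic complement, or alternatively use the saturation/full-rank trick exploited inside the proof of $(2)\Rightarrow(3)$. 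I expect the clean route is to pass to the subsheaf $E^Z$ of $E$ generated by $W$ when $W$ generically generates $E$, and when $\rank F < r$ to account for the deficiency by the additive behaviour of the linear slope recorded in Corollary~\ref{LinSlopeEquality}; the identity $\lambda(F \oplus \cO_C^{\oplus t}, W \oplus H^0(\Oc^{\oplus t})) = \lambda(F,W)$ lets one match a low-rank generated subsystem against a full-rank elementary transformation without changing the relevant slope. One then applies hypothesis (2) to the resulting elementary transformation and transports the inequality back to $(F,W)$, checking that $\deg$ and $\dim W$ move in the favourable direction.

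The main obstacle I anticipate is precisely this rank discrepancy: Definition~\ref{DefnLinSstCohSys} admits subsheaves $F$ of any rank $\le r$, whereas Proposition~\ref{LinSstCrit}(2) only sees full-rank elementary transformations, and the definition of $\lambda$ itself uses $\dim W - \rank F$, so naive enlargement of $F$ changes both numerator and the ``$-\rank F$'' term and one must confirm the net inequality is preserved. The likely resolution is that a generated subsystem $(F,W)$ of rank $r_F < r$ can be compared to the elementary transformation $E^Z$ with $W \subseteq V \cap H^0(C, E^Z)$ generating a subsheaf of degree $\ge \deg F$, so that $\lambda$ only improves, exactly as in the chain of inequalities $\frac{d-e}{m-r} \ge \frac{\deg E^Z_W}{m-r} \ge \frac{d}{n-r}$ used in the proof of Proposition~\ref{LinSstCrit}; I would model the argument on that computation. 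Care must also be taken with the exclusion of the trivial quotient on both sides and with the stable (strict inequality) case, where one tracks that strictness is neither created nor destroyed under the reduction.
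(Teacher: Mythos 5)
Your proposal is correct and follows essentially the same route as the paper: the forward direction is precisely the observation that full-rank elementary transformations $(F, V \cap H^0(C,F))$ are special generated subsystems with $E/F$ a nonzero torsion (hence nontrivial) sheaf, and for the converse with $\rank F < r$ the paper does exactly your ``generic complement'' variant — choosing $W_1 \subset V$ of dimension $r - \rank F$ with $W \oplus W_1$ generically generating $E$, so that $F \oplus (\Oc \otimes W_1)$ is an elementary transformation, proper because $E/F$ is nontrivial (which also handles the strict case), and then combining Corollary \ref{LinSlopeEquality} with implication (1) $\Rightarrow$ (2) of Proposition \ref{LinSstCrit} and the monotonicity of $\lambda$ in $\dim W$. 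Note only that your first-mentioned route, passing to the subsheaf of $E$ generated by $W$ alone, cannot work when $\rank F < r$ (that subsheaf is just $F$ and is not full rank), so the complement construction you then settle on is indeed the necessary fix and is the paper's argument.
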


\begin{proof}
Suppose that $(E, V)$ is linearly semistable in the sense of Definition \ref{DefnLinSstCohSys}. Then in particular, $\lambda (F, W) \ge \lambda(E, V)$ for all generated subsystems $(F, W)$ where $\rank F = \rank E$ and $\deg F > 0$. By implication (2) $\Rightarrow$ (1) of Proposition \ref{LinSstCrit}, we see that $\psi (\pe)$ is Mumford linearly semistable in $\pv$.

Conversely, suppose that $\psi (\pe)$ is Mumford linearly semistable in $\pv$. Let $(F, W)$ be a proper generated coherent subsystem of $(E, V)$ such that $\deg F > 0$ and $E/F$ is not trivial. If $\rank F = r$, then by implication (1) $\Rightarrow$ (2) of Proposition \ref{LinSstCrit} we have $\lambda (F, W) \ge \frac{d}{n-r}$.

Suppose that $\rank F < r$. As $V$ generates $E$, we may choose a subspace $W_1 \subset V$ of dimension $r - \rank F$ intersecting $W$ in zero such that $W \oplus W_1$ generically generates $E$. 
 Then $F \oplus (\Oc \otimes W_1 )$ is an elementary transformation of $E$ which is generated by the subspace $W \oplus W_1$ of $V$. As $E/F$ is not trivial, $F \oplus (\Oc \otimes W_1) \ne E$. Therefore, by implication (1) $\Rightarrow$ (2) of Proposition \ref{LinSstCrit} and using Corollary \ref{LinSlopeEquality}, we obtain
\[
\lambda \left( F, V \cap H^0 (C, F) \right) \ = \ \lambda \left( F \oplus ( \Oc \otimes W_1 ) , \left( H^0 (C, F) \cap V \right)  \oplus W_1 \right) \ \ge \ \lambda (E, V) .
\]
As $\dim W \le \dim \left( H^0 (C, F) \cap V \right)$, this implies that $\lambda (F, W) \ge \lambda (E, V)$. We conclude that $(E, V)$ is a linearly semistable coherent system.

The equivalence of the respective notions of linear \emph{stability} can be shown in exactly the same way.
\end{proof}

\begin{remark} \label{TrivSummand}
In the above proof, suppose that $W_1$ is nonzero. Then we have a diagram
\[ \xymatrix{
0 \ar[r] & \mfw \ar[d] \ar[r] & \Oc \otimes W \ar[r] \ar[d] & F \ar[r] \ar[d] & 0 \\
 0 \ar[r] & M_{F \oplus (\Oc \otimes W_1), W \oplus W_1} \ar[r] & \Oc \otimes ( W \oplus W_1 ) \ar[r] \ar[d] & F \oplus (\Oc \otimes W_1) \ar[r] \ar[d] & 0 \\
 & & \Oc \otimes W_1 \ar[r] & \Oc \otimes W_1 & } \]
By the snake lemma, $M_{F \oplus (\Oc \otimes W_1), W \oplus W_1} \cong \mfw$. Thus adding a trivial summand to $(F, W)$ does not change $\mfw$. Compare with Corollary \ref{LinSlopeEquality}.
\end{remark}

\begin{remark} \label{ShrinkingCentre}
Geometrically, extending $W$ to $W \oplus W_1$ corresponds to shrinking the centre of projection in such a way that $p_{W \oplus W_1} ( \pe )$ has dimension $r$ while the reduced degree of the image is preserved. More precisely: Consider the diagram
\[ \xymatrix{
& \pe \ar@{-->}[rr] \ar@{-->}[dr] \ar[dd]_\psi & & \pf \ar[dd]^\rho \\
\PP \left( W \oplus W_1 \right)^\perp \ar[dr] \ar[d] & & \PP \left( F \oplus (\Oc \otimes W_1) \right)^\vee \ar@{-->}[ur] \ar[dd]_(.3){\tilde{\rho}} & \\
\PP W^\perp \ar[r] & \pv \ar@{-->}[rr]^(.7){p_W} \ar@{-->}[dr]_{p_{W \oplus W_1}} & & \pw \\
&  & \PP \left( W \oplus W_1 \right)^\vee \ar@{-->}[ur] &
} \]
By Lemma \ref{DegreeProperties} (b), the maps $\psi$, $\rho$ and $\tilde{\rho}$ are morphisms, and in particular $\tilde{\rho}$ has image of dimension $r$. We suppose for simplicity that they are of degree $1$. Then using Lemma \ref{DegreeProperties} (c), we obtain
\[
\frac{\deg \tilde{\rho} \left( \PP \left( F \oplus ( \Oc \otimes W_1 ) \right)^\vee \right)}{m + (r - \rank F) - r} \ = \ \frac{\deg \rho \left( \pf \right)}{m - \rank F} ;
\]
that is, $\reddeg \tilde{\rho} \left( \PP \left( F \oplus ( \Oc \otimes W_1 ) \right)^\vee \right) = \reddeg \rho ( \pf )$.

The above situation illustrates why we allow coherent systems in which the ambient bundle may not be ample. When the direct sum decomposition $W \oplus W_1$ is chosen, we obtain maps $\pf \to \pw \hookrightarrow \PP (W \oplus W_1)^\vee$, and the subvariety $\tilde{\rho} ( \PP \left( F \oplus \left( \Oc \otimes W_1 ) \right)^\vee \right)$ is the cone over $\rho ( \pf )$ with vertex $\PP W_1^\vee$.
\end{remark}

\section{Existence of linearly stable projective models} \label{ExistenceLinSt}

Let $E$ be any vector bundle of rank $r \ge 2$. In this section we construct a linearly stable model of $\pe$ in $\PP^{n-1}$ for any $n \ge r + 2$. We firstly prove some technical lemmas.

Recall that for each vector bundle $G$, we have a unique Harder--Narasimhan filtration
\begin{equation} \label{HNfilt}
0 \ = \ G^0 \ \subset \ G^1 \ \subset \ G^2 \ \subset \ \cdots \ \subset G^s \ = \ G
\end{equation}
characterised by the property that $G^i / G^{i-1}$ is semistable for $1 \le i \le s$, and if $s \ge 2$ then $\mu \left( G^i / G^{i-1} \right) > \mu \left( G^{i+1} / G^i \right)$ for $1 \le i \le s-1$. Following \cite{But94}, we write
\begin{equation} \label{muplus}
\mu^+ (G) \ := \ \mu (G^1) \ = \ \max\{ \mu(H) : H \hbox{ a nonzero subbundle of } G \} .
\end{equation}
If $G$ is semistable, then $\mu^+ (G) = \mu (G)$. Moreover, we let
\[
\HN (G) \ := \ \bigoplus_{i=1}^s \frac{G^i}{G^{i-1}}
\]
be the associated graded bundle of the Harder--Narasimhan filtration.

\begin{lemma} \label{HNsections}
Let $G \to C$ be any vector bundle. Then $h^0 (C, G) \le h^0 (C, \HN (G))$.
\end{lemma}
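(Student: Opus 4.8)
The plan is to induct along the Harder--Narasimhan filtration (\ref{HNfilt}), using nothing more than the left-exactness of the global sections functor $H^0(C, -)$. The key observation is that the claim does not actually depend on the filtration being the Harder--Narasimhan one; it is a general statement about subadditivity of $h^0$ along any filtration whose successive quotients are the summands of $\HN(G)$.

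Concretely, for each $i$ with $1 \le i \le s$, I would consider the short exact sequence of the filtration,
\[
0 \ \to \ G^{i-1} \ \to \ G^i \ \to \ G^i/G^{i-1} \ \to \ 0 ,
\]
and take the associated long exact sequence in cohomology. Left-exactness gives the inequality $h^0(C, G^i) \le h^0(C, G^{i-1}) + h^0(C, G^i/G^{i-1})$, since $H^0(C, G^i)$ injects into $H^0(C, G^i)$ quotiented by the image of $H^0(C, G^{i-1})$, which in turn injects into $H^0(C, G^i/G^{i-1})$.

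Starting from $h^0(C, G^0) = h^0(C, 0) = 0$ and iterating this inequality, a straightforward induction on $i$ yields
\[
h^0 (C, G^i) \ \le \ \sum_{j=1}^{i} h^0 \left( C, G^j/G^{j-1} \right) .
\]
Taking $i = s$, the left-hand side is $h^0(C, G)$ and the right-hand side is exactly $\sum_{j} h^0(C, G^j/G^{j-1}) = h^0(C, \HN(G))$, since cohomology commutes with the finite direct sum defining $\HN(G)$. This gives the desired bound.

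There is no genuine obstacle here; the only point requiring any care is making sure the induction is anchored correctly at $G^0 = 0$ and that one invokes only the injectivity coming from left-exactness (the cokernels and higher cohomology play no role). I would state the proof in a couple of sentences rather than writing out every cohomology sequence explicitly.
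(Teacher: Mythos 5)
Your proof is correct and is essentially the same as the paper's: both arguments reduce the claim to the subadditivity $h^0(C, G^i) \le h^0(C, G^{i-1}) + h^0(C, G^i/G^{i-1})$ coming from left-exactness of global sections, then induct (the paper inducts on the filtration length by quotienting off $G^1$, while you run the induction up the filtration from $G^0 = 0$ — an equivalent bookkeeping choice). The only blemish is the garbled sentence ``$H^0(C,G^i)$ injects into $H^0(C,G^i)$ quotiented by the image of $H^0(C,G^{i-1})$''; you mean that the quotient $H^0(C,G^i)/H^0(C,G^{i-1})$ injects into $H^0(C, G^i/G^{i-1})$, which is what your inequality actually uses.
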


\begin{proof}
If $s = 1$ then $G = \HN(G)$ and there is nothing to prove. If $s \ge 2$, consider the filtration
\begin{equation} \label{HNexactseq}
0 \ = \ G^2 / G^1 \ \subset \ G^3 / G^1 \ \subset \ \cdots \ \subset G^s / G^1 \ = \ G / G^1
\end{equation}
It is easy to check that this is the Harder--Narasimhan filtration of $G / G^1$. 
 Therefore,
\[
\HN \left( \frac{G}{G^1} \right) \ = \ \bigoplus_{i=2}^s \frac{G^i}{G^{i-1}} .
\]
By induction on $s$, we may assume that $h^0 (C, G / G^1 ) \le h^0 (C, \HN ( G / G^1 ))$. Then from the exact sequence $0 \to G^1 \to G \to G/G^1 \to 0$, we obtain as desired
\begin{multline*}
h^0 (C, G) \ \le \ h^0 (C, G^1) + h^0 \left(C,  G / G^1 \right) \ \le \\
h^0 (C, G^1 ) ) + h^0 \left(C, \HN \left(G / G^1\right) \right) \ = \ h^0 (C, \HN ( G ) ) . \qedhere
\end{multline*}
\end{proof}

The next lemma is extremely elementary.

\begin{lemma} \label{calculus}
Let $r$ and $n$ be integers with $n \ge r + 2$. Then
\[
\tag{a}
\max \left\{ \frac{(n - r)(n - t)}{n - r - t} : 1 \le t \le n - r - 1 \right\} \ = \ (n - r)(r + 1)
\]
and
\[ \tag{b}
\max \left\{ \frac{t(n - t)}{n - r - t} : 1 \le t \le n - r - 1 \right\} \ = \ (n - r - 1)(r + 1) .
\]
\end{lemma}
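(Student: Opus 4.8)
The plan is to exploit the common structure of the two expressions: both have denominator $n - r - t$, which is positive and strictly decreasing in $t$ on the range $1 \le t \le n-r-1$, attaining its minimum value $1$ at the right endpoint $t = n - r - 1$. I would show that each of the two functions is strictly increasing in $t$, so that both maxima are located at $t = n-r-1$, and then simply read off the values.

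For part (a), I would factor out the constant $n-r$ and rewrite the remaining quotient as
\[
\frac{n-t}{n-r-t} \ = \ 1 + \frac{r}{n-r-t} .
\]
Since $r > 0$ and $n - r - t$ is positive and decreasing in $t$, the right-hand side is strictly increasing in $t$; hence the maximum is attained at $t = n-r-1$, where $n-r-t = 1$ and the expression equals $r+1$. Multiplying back by $n-r$ gives $(n-r)(r+1)$, as claimed.

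For part (b), I would carry out the polynomial division
\[
\frac{t(n-t)}{n-r-t} \ = \ t - r + \frac{r(n-r)}{(n-r)-t} .
\]
The linear term $t - r$ is increasing, and the fractional term is increasing in $t$ for the same reason as in (a), since its denominator $(n-r)-t$ is positive and decreasing on the range. Thus the whole expression is strictly increasing and its maximum is again at $t = n-r-1$. Evaluating there gives $(n-r-1) - r + r(n-r)$, which I would then expand and rearrange to match $(n-r-1)(r+1)$.

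There is no serious obstacle here; the statement is purely elementary, as the author indicates. The only point requiring a moment's care is verifying that the division in (b) is correct and that the endpoint value collapses to the asserted product, which is a one-line computation. Since both functions are monotone as functions of a real variable on the interval $[1, n-r-1]$, restricting attention to integer $t$ introduces no additional subtlety, and the maxima are located at the same endpoint $t = n-r-1$ in both cases.
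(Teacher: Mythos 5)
Your proof is correct, and it reaches the same skeleton as the paper's argument (both functions are increasing on the real interval, so the maximum sits at the right endpoint $t = n-r-1$, where the denominator equals $1$), but you verify the monotonicity by a genuinely different and more elementary route. The paper proves (a) by computing the derivative of $\frac{(n-r)(n-x)}{n-r-x}$, and proves (b) by factoring the function as the product $\frac{(n-r)(n-x)}{n-r-x} \cdot \frac{x}{n-r}$ of two positive increasing functions and invoking the Leibniz rule. You instead use exact algebraic decompositions: the rewriting $\frac{n-t}{n-r-t} = 1 + \frac{r}{n-r-t}$ for (a), and the division $\frac{t(n-t)}{n-r-t} = t - r + \frac{r(n-r)}{(n-r)-t}$ for (b), which checks out, since $t(n-t) - (t-r)\bigl((n-r)-t\bigr) = r(n-r)$, and the endpoint evaluation $(n-r-1) - r + r(n-r) = (n-r-1)(r+1)$ is a correct one-line expansion. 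Your version avoids calculus entirely, makes each part self-contained rather than having (b) lean on (a), and the sum decomposition in (b) is arguably cleaner than the paper's product trick. The only point worth flagging in either proof is the implicit assumption $r \ge 1$ for \emph{strict} monotonicity (for $r = 0$ the function in (a) is constant and the stated maxima still hold trivially); since $r$ is a rank throughout the paper, this costs nothing, but it matches an identical implicit assumption in the paper's derivative computation.
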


\begin{proof}
We easily check that the function $\frac{(n - r)(n - x)}{n - r - x}$ is increasing for $0 < x < n-r$, and then (a) follows. As $\frac{(n - r)(n - x)}{n - r - x}$ and $\frac{x}{n - r}$ are positive and increasing for these values of $x$, using the Leibniz rule we see that their product is also, and then (b) follows.
\end{proof}

We prove now a statement which will be required in the proof of Lemma \ref{pinondefSpan}.

\begin{lemma} \label{SchThSupp}
Let $\phi \colon F \to T$ be a map of $\Oc$-modules where $T$ is torsion. Let $\cI$ be the ideal sheaf $\Ann (T)$. Then $\cI \cdot F \subseteq \Ker (\phi)$. Moreover, if $F$ is locally free, the multiplication map $\cI \otimes F \to \cI \cdot F$ is an isomorphism.
\end{lemma}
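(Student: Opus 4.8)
The plan is to verify both assertions by reducing to local (stalk-level) statements, since every object and map here is defined sheaf-theoretically and being contained in a kernel, being surjective, and being an isomorphism can all be checked on stalks or over an affine cover.

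For the inclusion $\cI \cdot F \subseteq \Ker(\phi)$, I would argue directly from the generators. By construction $\cI \cdot F$ is the image of the multiplication map $\cI \otimes_{\Oc} F \to F$, so locally it is generated as an $\Oc$-module by products $a \cdot s$ with $a$ a local section of $\cI = \Ann(T)$ and $s$ a local section of $F$. For such a product, $\Oc$-linearity of $\phi$ gives $\phi(a \cdot s) = a \cdot \phi(s)$; since $\phi(s)$ is a local section of $T$ and $a$ annihilates $T$, this vanishes. Hence every local generator of $\cI \cdot F$ lies in $\Ker(\phi)$, and as $\Ker(\phi)$ is a subsheaf, the asserted inclusion follows.

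For the isomorphism statement, the key point is that a locally free sheaf is flat over $\Oc$, so the functor $- \otimes_{\Oc} F$ is exact. Applying it to the short exact sequence
\[
0 \ \to \ \cI \ \to \ \Oc \ \to \ \Oc / \cI \ \to \ 0
\]
shows that the induced map $\cI \otimes_{\Oc} F \to \Oc \otimes_{\Oc} F = F$ is injective. Its image is precisely $\cI \cdot F$ by the definition of the product of an ideal sheaf with a module, so the corestricted multiplication map $\cI \otimes_{\Oc} F \to \cI \cdot F$ is injective; it is surjective by construction, and therefore an isomorphism. Note that this uses nothing about $\cI$ beyond its being an ideal sheaf of $\Oc$.

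I do not expect a serious obstacle: both parts reduce to standard commutative-algebra facts. The only points that deserve care are the identification of the image of the multiplication map $\cI \otimes_{\Oc} F \to F$ with the subsheaf $\cI \cdot F$, and the realisation that the correct tool for injectivity is \emph{flatness} of $F$ rather than any special property of $\cI$ (such as invertibility, which does hold on the smooth curve $C$ but is not needed).
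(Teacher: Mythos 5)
Your proof is correct and matches the paper's argument in substance: both establish the inclusion by applying $\Oc$-linearity of $\phi$ to products $a \cdot s$ with $a$ a local section of $\Ann(T)$, and both obtain the isomorphism by tensoring $0 \to \cI \to \Oc \to \Oc/\cI \to 0$ with the flat (locally free) sheaf $F$ and identifying the image of the resulting injection with $\cI \cdot F$. The only cosmetic difference is that the paper first restricts to an affine chart $\Spec A$ containing the finite support of $T$ and works with modules, whereas you argue directly with local sections and stalks; the two reductions are interchangeable here.
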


\begin{proof}
As $C$ is a curve, $T$ is supported at finitely many points. Thus it suffices to prove the statements for the restriction of $F$ to an open affine subset $\Spec A \subset C$ containing the support of $T$. Therefore, we may assume that $F = \tM$ and $T = \tN$ for $A$-modules $M$ and $N$. Write $I$ for the ideal $\Ann (N) \subset A$. Then if $i \in I$ and $f \in F$, we have $\phi (i \cdot f) = i \cdot \phi (f) = 0$. The first statement follows. For the rest: If $F$ is locally free then $M$ is flat, and the sequence $0 \to I \otimes M \xrightarrow{m} M \to \frac{A}{I} \otimes M \to 0$ is exact, and clearly $\Image (m) = I \cdot M$. 
\end{proof}

Next, we recall the definition of $(k-1)$-very ampleness.

\begin{definition} \label{kVeryAmple}
For $k \ge 1$, a bundle $F$ of rank $r$ is said to be \textsl{$(k-1)$-very ample} if for each effective divisor $D$ of degree $k$ we have $h^0 (C, F(-D)) \ = \ h^0 (C, F) - kr$.
\end{definition}

\begin{remark} \label{kVAkmoVA}
Notice that $E$ is $0$-very ample if and only if it is generated; and that a $(k-1)$-very ample sheaf is also $(k-i)$-very ample for $1 \le i \le k$.
\end{remark}

\noindent We will need the following technical fact about relatively nondefective subschemes (cf.\ Definition \ref{DefnPiNondef}).

\begin{lemma} \label{pinondefSpan}
Let $F$ be an $(e - 1)$-very ample bundle and $\pi \colon \PP F^\vee \to C$ the associated projective bundle. If $Z \subset \PP F^\vee$ is $\pi$-nondefective of length $e$, then $\dim \Span \left( \Psi (Z) \right) = e - 1$.
\end{lemma}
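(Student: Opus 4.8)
The plan is to relate the span of $\Psi(Z)$ directly to the cohomology of the elementary transformation $F^Z$, using Proposition \ref{SpanZ} (a). Since $Z$ is $\pi$-nondefective of length $e$, we have $\deg F^Z = \deg F - e$, and by Proposition \ref{SpanZ} (a) applied with $V = H^0(C, F)$, the span $\Span(\Psi(Z))$ is the projectivisation of $\Ker\left( H^0(C,F)^\vee \to \left( H^0(C, F^Z) \right)^\vee \right)$. Consequently $\dim \Span(\Psi(Z)) + 1 = h^0(C, F) - h^0(C, F^Z)$. So the entire statement reduces to proving the cohomological equality
\[
h^0(C, F) - h^0(C, F^Z) \ = \ e .
\]

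The key idea is to identify $F^Z$ with a twist $F(-D)$ and invoke $(e-1)$-very ampleness. First I would use Theorem \ref{F=EZ}, or rather the fact that the span only depends on the subsheaf $F^Z \subseteq F$ (Proposition \ref{SpanZ} (b)), to reduce to a convenient representative of the elementary transformation. The cleanest situation is when $Z$ is curvilinear and supported at distinct points, in which case $F^Z = F(-D)$ for an effective divisor $D$ of degree $e$, and then Definition \ref{kVeryAmple} of $(e-1)$-very ampleness gives exactly $h^0(C, F(-D)) = h^0(C,F) - er$. However, this yields the wrong count: it produces $\dim\Span(\Psi(Z)) = er - 1$, not $e - 1$. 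This discrepancy signals that the relevant elementary transformation is \emph{not} a full divisorial twist $F(-D)$ but rather a sheaf $F^Z$ with $\deg F = \deg F^Z + e$ and colength $e$, corresponding to imposing only $e$ linear conditions (one per point of a curvilinear length-$e$ scheme lying in \emph{distinct fibres}), not $er$.

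The correct approach is therefore to work fibrewise. Since $Z$ is $\pi$-nondefective, the points of $Z$ impose independent conditions on $F$ via the evaluation at the one-dimensional subspaces of the fibres $F|_{\pi(z)}$ determined by $z \in \PP F^\vee$; concretely, $F^Z = \pi_* \cI_Z(1)$ sits in the sequence (\ref{EZseq}) with $\pi_* \cO_Z(1)$ a torsion sheaf of length $e$, and $\pi$-nondefectivity means precisely that $F \to \pi_* \cO_Z(1)$ is surjective. Thus the long exact sequence in cohomology gives
\[
0 \to H^0(C, F^Z) \to H^0(C, F) \to H^0(C, \pi_*\cO_Z(1)) \to \cdots,
\]
and the task becomes showing that $H^0(C,F) \to H^0(C, \pi_* \cO_Z(1)) \cong \C^e$ is surjective. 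This is where $(e-1)$-very ampleness enters: I would reduce (using Theorem \ref{F=EZ} and curvilinearity) to the case of $e$ distinct fibres, where the condition at each point is evaluation of a section at a point of $\PP F^\vee$, i.e. a single linear functional on the fibre $F|_p$. Lemma \ref{SchThSupp} handles the scheme structure at non-reduced points by controlling the annihilator ideal of the torsion quotient. The main obstacle, and the crux of the argument, is proving the surjectivity of $H^0(C,F) \to \C^e$: one must show that $(e-1)$-very ampleness, which a priori controls divisors of degree $e$ (giving $er$ conditions), also forces the $e$ fibre-direction conditions coming from distinct points of $\PP F^\vee$ to be independent. I expect this to follow by choosing $D = \pi(Z)$ of degree $e$, observing $F(-D) \subseteq F^Z \subseteq F$, and comparing dimensions: $(e-1)$-very ampleness bounds $h^0(F(-D))$ from above, while each of the $e$ fibre conditions cuts at most one dimension, pinning $h^0(F^Z) = h^0(F) - e$ exactly.
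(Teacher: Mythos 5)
Your proposal follows essentially the same route as the paper: both reduce via Proposition \ref{SpanZ} (a) to the cohomological statement $h^0(C,F) - h^0(C,F^Z) = e$ (equivalently, surjectivity of $H^0(C,F) \to H^0(C,\pi_*\cO_Z(1))$, which has length $e$ by $\pi$-nondefectivity), and both exploit the sandwich $F(-D) \subseteq F^Z \subseteq F$ obtained from Lemma \ref{SchThSupp} together with $(e-1)$-very ampleness. Your endgame --- the colength count $h^0(C,F^Z) \le h^0(C,F(-D)) + \length \left( F^Z/F(-D) \right) = \left( h^0(C,F) - er \right) + e(r-1) = h^0(C,F) - e$, combined with the trivial lower bound $h^0(C,F^Z) \ge h^0(C,F) - e$ from the colength-$e$ inclusion $F^Z \subseteq F$ --- is a correct and slightly slicker variant of the paper's argument, which instead factorises $\gamma \colon F \to \pi_*\cO_Z(1)$ through $F/F(-D)$ and proves surjectivity of $H^0$ of each factor. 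Two slips, both repairable. First, the proposed ``reduction to $e$ distinct fibres'' via Theorem \ref{F=EZ} is neither available nor needed: Theorem \ref{F=EZ} and Proposition \ref{SpanZ} (b) let you replace $Z$ only while \emph{fixing} the subsheaf $F^Z$, so if $F/F^Z$ has non-reduced support on $C$ there is no representative supported in $e$ distinct fibres; happily, your final dimension count never uses this reduction. Second, $D = \pi(Z)$, i.e.\ the scheme-theoretic support of $\pi_*\cO_Z(1)$ cut out by $\Ann \left( \pi_*\cO_Z(1) \right)$, need \emph{not} have degree $e$ --- for instance, if $Z$ consists of two distinct points of a single fibre, the support is one reduced point --- so you should either enlarge $D$ to an arbitrary effective divisor of degree exactly $e$ containing the support (then $F(-D) \subseteq F^Z$ still holds and your count goes through verbatim), or keep $\deg D \le e$ and invoke Remark \ref{kVAkmoVA}, which is what the paper does.
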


\begin{proof}
Since $Z$ is $\pi$-nondefective, the sequence $0 \to F^Z \to F \xrightarrow{\gamma} \pi_* \Oz (1) \to 0$ is exact. By Proposition \ref{SpanZ}, therefore, $\Span \Psi (Z)$ is the space
\[
\ \PP \Ker \left( H^0 (C, F)^\vee \ \to \ H^0 \left( C, F^Z \right)^\vee \right) \ = \ \PP \Image \left( H^0 (C, \pi_* \Oz (1))^\vee \ \xrightarrow{H^0 (\gamma)^\vee} \ H^0 (C, F)^\vee \right) .
\]
Thus it would suffice to show that $H^0 (\gamma) \colon H^0 (C, F) \to H^0 (C, \pi_* \Oz(1) )$ is surjective.

Let $D$ be the scheme-theoretic support of $\pi_* \Oz (1)$; that is, the subscheme of $C$ defined by the ideal sheaf $\Ann \left( \pi_* \Oz (1) \right)$. By Lemma \ref{SchThSupp}, we have $F(-D) \subseteq F^Z$. As $Z$ has length $e$, so does $\pi_* \Oz (1)$, and so $\deg D \le e$. We obtain a diagram
\[ \xymatrix{
F(-D) \ar[r] \ar[d] & F \ar[r]^-\alpha \ar[d]^\Iden & F/F(-D) \ar[d]^\beta \\
F^Z \ar[r] & F \ar[r]^-\gamma & \pi_* \Oz (1)
} \]
This shows that $H^0 (\gamma)$ factorises as
\[
H^0 (C, F) \ \xrightarrow{H^0 (\alpha)} \ H^0 (C, F/F(-D)) \ \xrightarrow{H^0 (\beta)} \ H^0 (C, \pi_* \Oz (1)) .
\]
Now since $Z$ is $\pi$-nondefective, $\gamma = \beta \circ \alpha$ is surjective, and in particular $\beta$ is surjective. As $\beta$ is a map of sheaves with support of dimension zero, also $H^0 (\beta)$ is surjective. Furthermore, $H^0 (\alpha)$ is surjective because $F$ is $(e - 1)$-very ample. Thus $H^0 (\gamma)$ is surjective, as desired.
\end{proof}

\begin{proposition} \label{one}
Suppose that $F \to C$ is $(n - r - 1)(r + 1)$-very ample. Let $V \subset H^0 (C, F)$ be a general subspace of dimension $n$, and let $\psi \colon \PP F^\vee \to \PP V^\vee$ be the natural map. For $1 \le t \le n - r - 1$, if $Z \subset \PP F^\vee$ is a curvilinear $\pi$-nondefective subscheme satisfying $\length (Z) > \frac{t(n - t)}{n - r - t}$, then $\dim \Span \psi (Z) \ge t$.
\end{proposition}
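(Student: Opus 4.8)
The plan is to translate the statement into one about sections of elementary transformations via Proposition \ref{SpanZ}, prove the required inequality for general $V$ by a dimension count on an incidence variety in the range of lengths where very ampleness is available, and then extend to arbitrarily large $\length Z$ by monotonicity.

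First I would reformulate. Since $F$ is very ample it is in particular generated, so a general $V$ of dimension $n > r$ generates $F$ and $\psi$ is a morphism. For a $\pi$-nondefective $Z$ of length $e$ write $F^Z := \pi_* \cI_Z(1)$, so that Proposition \ref{SpanZ}~(a) gives
\[ \dim \Span \psi(Z) \ = \ n - 1 - \dim\left( V \cap H^0(C, F^Z) \right) . \]
Thus $\dim \Span \psi(Z) \ge t$ is equivalent to $\dim\left( V \cap H^0(C, F^Z) \right) \le n-1-t$, and it suffices to establish this latter bound for general $V$. Set $N := h^0(C,F)$ and $M := (n-r-1)(r+1)$.

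The heart of the argument is the case $e \le M+1$. Here $F$ is $(e-1)$-very ample by Remark \ref{kVAkmoVA}, so the surjectivity of $H^0(C, F) \to H^0(C, \pi_* \cO_Z(1))$ proved in Lemma \ref{pinondefSpan} gives $h^0(C, F^Z) = N - e$ for \emph{every} $\pi$-nondefective $Z$ of length $e$; this uniformity is the crucial input. By Theorem \ref{F=EZ} and Proposition \ref{SpanZ}~(b), the span $\Span \psi(Z)$ depends only on the class $[F^Z] \in \Quot^e(F)$, a scheme of dimension $er$. For a fixed such class, $H^0(C, F^Z) \subseteq H^0(C,F)$ has dimension $N-e$, and the locus of $V \in \Gr(n, H^0(C,F))$ with $\dim\left( V \cap H^0(C,F^Z)\right) \ge n-t$ is a Schubert variety of codimension $(n-t)(e-t)$ (and is empty when $N - e < n-t$, where there is nothing to prove). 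Forming the incidence variety over $\Quot^e(F)$ and projecting to the Grassmannian, a general $V$ avoids every such Schubert locus as soon as $er < (n-t)(e-t)$; a short manipulation shows that this is precisely the hypothesis $e > \frac{t(n-t)}{n-r-t}$, using $n-r-t>0$. This disposes of all $Z$ of length $e \le M+1$.

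For $e > M+1$ I would argue by monotonicity. As $Z$ is curvilinear it lies on a smooth curve, hence contains a curvilinear subscheme $Z' \subset Z$ of length $M+1$; since $Z$ is $\pi$-nondefective so is $Z'$, because the surjection $F \to \pi_* \cO_Z(1)$ factors through $\pi_* \cO_{Z'}(1)$. As $\Span \psi(Z') \subseteq \Span \psi(Z)$, it suffices to bound the span of $Z'$. By Lemma \ref{calculus}~(b), $\frac{t(n-t)}{n-r-t} \le M < M+1 = \length Z'$, so $Z'$ falls under the case already treated and yields $\dim \Span \psi(Z') \ge t$, whence $\dim \Span \psi(Z) \ge t$. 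Since only finitely many lengths satisfy $\frac{t(n-t)}{n-r-t} < e \le M+1$, the genericity conditions imposed on $V$ in the bounded case are finite in number, and a general $V$ satisfies all of them simultaneously. The main obstacle is thus the bounded-case dimension count: one must control the dimension $er$ of the family of elementary transformations and verify that the Schubert condition genuinely cuts codimension $(n-t)(e-t)$ uniformly over $\Quot^e(F)$, which is exactly what the very ampleness hypothesis guarantees through Lemma \ref{pinondefSpan}; the precise constant $(n-r-1)(r+1)$ enters only to ensure via Lemma \ref{calculus}~(b) that the truncation length $M+1$ exceeds the threshold $\frac{t(n-t)}{n-r-t}$ for every admissible $t$.
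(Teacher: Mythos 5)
Your proposal is correct and takes essentially the same route as the paper's proof: the same use of $(e-1)$-very ampleness through Lemma \ref{pinondefSpan} to force the uniform drop $h^0(C, F^Z) = N - e$, the same dimension count against an $re$-dimensional family of elementary transformations with the same key inequality $(e-t)(n-t) > re$, and the same truncation of longer curvilinear $Z$ to a shorter curvilinear subscheme whose $\pi$-nondefectivity is inherited via the surjection onto $\pi_* \cO_{Z'}(1)$. The only cosmetic differences are that you parametrise by $\Quot^e(F)$ and phrase the count as a Schubert codimension $(n-t)(e-t)$ in $\Gr(n, N)$, where the paper uses the curvilinear locus of $\Hilb^e(\PP F^\vee)$ and a flag construction for $V^\perp$ in $\Gr(N-n, N)$, and that you truncate at length $(n-r-1)(r+1)+1$ rather than at the minimal integer exceeding $\frac{t(n-t)}{n-r-t}$.
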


\begin{proof}
Fix $t \in \{ 1 , \ldots , n - r - 1 \}$. We begin by proving the statement for $Z$ of length
\[
e \ := \ \min \left\{ e' \in \bZ : e' > \frac{t(n - t)}{n - r - t} \right\} \ = \
\begin{cases}
\frac{t(n - t)}{n - r - t} + 1 \hbox{ if } (n-r-t) | t(n-t) ; \\
\left\lceil \frac{t(n - t)}{n - r - t} \right\rceil \hbox{ otherwise.}
\end{cases}
\]
Set $N := h^0 (C, F)$, and write
\[
\Psi \colon \PP F^\vee \ \to \ \PP H^0 (C, F)^\vee \ \cong \ | \opfo |^\vee \ = \ \PP^{N - 1}
\]
for the natural map. By the very ampleness assumption together with Lemma \ref{calculus} (b) and Remark \ref{kVAkmoVA}, we see that $F$ is $(e - 1)$-very ample. Therefore, $\dim \Span \Psi (Z) \ = \ e - 1$ by Lemma \ref{pinondefSpan}. It follows that if $\dim \Span \psi (Z) \le t - 1$, the cone $V^\perp$ over the centre of projection fits into a diagram of the form
\[ \xymatrix{
0 \ar[r] & \Pi_1 \ar[r] \ar[d]^\Iden & V^\perp \ar[r] \ar[d] & \Pi_2 \ar[r] \ar[d] & 0 \\
0 \ar[r] & \Pi_1 \ar[r] & H^0 (C, F)^\vee \ar[r] & \frac{H^0 (C, F)^\vee}{\Pi_1} \ar[r] & 0
} \]
where $\Pi_1$ is an $(e - t)$-dimensional subspace of the cone over $\Span \Psi (Z)$ and $\Pi_2$ has dimension $(N - n) - (e - t)$. The spaces $V^\perp$ obtained in this way define a locus in $\Gr (N - n, N)$ of dimension at most
\[
\dim \Gr (e - t, e) + \dim \Gr ((N - n) - (e - t), N - (e - t)) + \dim \Hilb^e (\PP F^\vee)_\cl ,
\]
where $\Hilb^e ( \PP F^\vee )_\cl$ denotes the locus of length $e$ curvilinear subschemes. As this is an open subset of the smoothable component of $\Hilb^e ( \PP F^\vee )$, it has dimension $re$. Thus the above number is
\[
(e - t)t + (N - n)n - (e - t)n + re \ = \ \dim \Gr (N - n, N) - \left( (e - t)(n - t) - re  \right) .
\]
As by hypothesis $e > \frac{t(n - t)}{n - r - t}$, we have $(n - t)(e - t) - re > 0$. Therefore, the locus of such $V^\perp$ is of positive codimension in $\Gr (N - n, N)$. Thus for a general $V \in \Gr (n, N)$ we have $\dim \Span \psi (Z) \ge t$ for every curvilinear $\pi$-nondefective $Z$ of length $e$.

Suppose now that $\length Z > e$. By curvilinearity, we may choose a length $e$ subscheme $Z_1 \subset Z$ which is again curvilinear. Moreover, on $\pe$ we have an exact sequence
\[
0 \ \to \ \frac{\cI_{Z_1} (1)}{\cI_Z (1)} \ \to \ \Oz (1) \ \xrightarrow{\rho} \ \cO_{Z_1} (1) \ \to \ 0 .
\]
As these sheaves have support of dimension zero, in particular $R^1_{\pi_*} \left( \frac{\cI_{Z_1} (1)}{\cI_Z (1)} \right)$ is zero and $\pi_* ( \rho )$ is surjective. Since $Z$ is $\pi$-nondefective, also the composed map
\[
E \ \isom \ \pi_* \opeo \ \to \ \pi_* \Oz (1) \ \xrightarrow{\pi_* ( \rho )} \ \pi_* \cO_{Z_1} (1)
\]
is surjective, and thus $Z_1$ is also $\pi$-nondefective by definition. Now we apply the above argument to $Z_1$ and obtain $\dim \Span \psi (Z_1) \ge t$. Thus the same is true for $\Span \psi (Z)$.
\end{proof}

\begin{proposition} \label{VAmpleLinSt}
Suppose that $F \to C$ is a bundle of rank $r$ which is $(n - r - 1)(r + 1)$-very ample. Suppose in addition that $\deg F \ge (n - r)(r + 1)$. Then if $V \subset H^0 (C, F)$ is a general subspace of dimension $n$, the scroll $\psi ( \PP F^\vee ) \subset \pv$ is linearly stable.
\end{proposition}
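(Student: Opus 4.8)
The plan is to deduce linear stability from the geometric criterion (3) of Proposition \ref{LinSstCrit}, applied to the scroll $\psi(\pf)\subset\pv$, for which $d=\deg F$ and $r=\rank F$. By Lemma \ref{CurvilSuffices} it is enough to verify the strict form of (\ref{ThreeIneq}) for every \emph{curvilinear} $\pi$-nondefective $Z\subset\pf$ whose span $\Span\psi(Z)$ has dimension at most $n-r-2$ and misses a general fibre. Fix such a $Z$ and write $s:=\dim\Span\psi(Z)$ and $t:=s+1$, so that $1\le t\le n-r-1$; the goal is the strict inequality $\frac{\length Z}{s+1}<\frac{d}{n-r}$.

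The core of the argument is a single application of Proposition \ref{one}. Since $\dim\Span\psi(Z)=t-1<t$, its contrapositive yields the length bound
\[ \length Z \ \le \ \frac{t(n-t)}{n-r-t} . \]
Dividing by $t=s+1$ and then invoking Lemma \ref{calculus}(a) together with the hypothesis $\deg F\ge(n-r)(r+1)$, I would chain
\[ \frac{\length Z}{s+1} \ \le \ \frac{n-t}{n-r-t} \ = \ \frac{1}{n-r}\cdot\frac{(n-r)(n-t)}{n-r-t} \ \le \ \frac{(n-r)(r+1)}{n-r} \ \le \ \frac{d}{n-r} . \]
This establishes the non-strict inequality for every admissible $Z$, hence linear \emph{semistability}.

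The main obstacle is promoting ``$\le$'' to the strict ``$<$'' needed for stability, and here it pays to locate exactly where equality can occur. The step using Lemma \ref{calculus}(a) is an equality only at the maximising value $t=n-r-1$, and the degree step is an equality only when $\deg F=(n-r)(r+1)$. Consequently the chain is automatically strict whenever $t<n-r-1$ (that is, $s<n-r-2$) or whenever $\deg F>(n-r)(r+1)$. The single resistant configuration is $s=n-r-2$ together with $\deg F=(n-r)(r+1)$ and $\length Z=(n-r-1)(r+1)$, which produces the equality $\frac{\length Z}{s+1}=\frac{d}{n-r}$. This is precisely the value $e=(n-r-1)(r+1)$ at which the dimension count inside Proposition \ref{one} degenerates to codimension zero, so that proposition cannot by itself exclude such a $Z$ for general $V$.

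I expect this extremal case to be the crux. My first move would be to check whether it can arise at all: being $(n-r-1)(r+1)$-very ample forces $h^0(C,F)$, and hence $\deg F$, to be large, and I would verify whether this already compels $\deg F>(n-r)(r+1)$, which would render the resistant configuration vacuous and make the strict chain apply throughout. If the very-ampleness bound does not by itself give a strict degree inequality, the alternative is to sharpen Proposition \ref{one} at the single length $e=(n-r-1)(r+1)$: one shows that the centres $V^\perp\in\Gr(N-n,N)$ (in the notation of Proposition \ref{one}) admitting a length-$e$ curvilinear $\pi$-nondefective $Z$ with $\dim\Span\psi(Z)=n-r-2$ form a \emph{proper} subvariety, so that a general $V$ forces $\dim\Span\psi(Z)\ge n-r-1$ for every such $Z$ and removes it from criterion (3). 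Closing this extremal case carefully---rather than the main estimate---is in my view the genuine difficulty.
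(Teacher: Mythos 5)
Your chain of inequalities is exactly the paper's proof: criterion (3) of Proposition \ref{LinSstCrit}, the reduction to curvilinear $Z$ via Lemma \ref{CurvilSuffices}, the contrapositive of Proposition \ref{one}, and Lemma \ref{calculus}(a) combined with the degree hypothesis. Where you diverge, you have located a genuine soft spot in the paper rather than missed an idea it contains: the paper simply asserts ``by Proposition \ref{one} we may assume that $\length (Z) < \frac{t(n-t)}{n-r-t}$'', whereas, as you say, the contrapositive of Proposition \ref{one} only yields $\length(Z) \le \frac{t(n-t)}{n-r-t}$, and the strict version is unjustified whenever this bound is an integer. Since the second inequality in the chain is strict unless $t = n-r-1$ and $\deg F = (n-r)(r+1)$, and since $\frac{t(n-t)}{n-r-t} = (n-r-1)(r+1)$ is always an integer at $t = n-r-1$, your ``resistant configuration'' is precisely the case the paper's proof passes over.

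However, neither of your proposed repairs can work, and in fact the extremal case is genuinely unclosable. For your first option: $(n-r-1)(r+1)$-very ampleness does not force $\deg F > (n-r)(r+1)$. Take $g = 0$ (allowed by the paper), $r = 2$, $n = 4$: the bundle $F = \opo(3)^{\oplus 2}$ is $3$-very ample of degree exactly $6 = (n-r)(r+1)$. For your second option: at $e = (n-r-1)(r+1)$, $t = n-r-1$, the codimension $(e-t)(n-t) - re$ in your own count vanishes, so properness of the bad locus in $\Gr(N-n, N)$ cannot follow from dimension reasons; and indeed it fails: in the example above, the general projection of $\PP F^\vee \subset \PP^7$ to $\PP^3$ has ordinary singularities, and the triple-point formula gives exactly four triple points, each of which is a reduced $\pi$-nondefective $Z$ of length $3$ (its three points lie in distinct fibres, since two points of one fibre would span a line) with zero-dimensional span, producing equality $\frac{\length Z}{1} = 3 = \frac{d}{n-r}$ in (\ref{ThreeIneq}). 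So at the boundary $\deg F = (n-r)(r+1)$ one can only expect linear \emph{semi}stability, and the proposition really needs the strict hypothesis $\deg F > (n-r)(r+1)$ --- which is exactly what its application supplies: condition (\ref{ConditionK}) in Theorem \ref{AnyELinSt} guarantees $\deg (E \otimes L_1^k) > (n-r)(r+1)$ strictly, and with a strict degree inequality your chain (with ``$\le$'' in the first step) is already strict throughout. In summary: your semistability argument coincides with the paper's; your stability analysis is more careful than the paper's own, which asserts the strict length bound without addressing the integer case; and the correct resolution is to strengthen the degree hypothesis, not to try to exclude the extremal $Z$ for general $V$.
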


\begin{proof}
Let $Z \subset \PP F^\vee$ be a $\pi$-nondefective curvilinear subscheme of dimension zero such that $\psi ( \PP F^\vee|_x ) \cap \Span \psi ( Z' )$ is empty for general $x \in C$. Suppose that $\Span \psi (Z)$ has dimension $t - 1$, where $1 \le t \le n - r - 1$. 
 As $V$ is general, by Proposition \ref{one} we may assume that $\length (Z) < \frac{t(n - t)}{n - r - t}$. Furthermore, $\deg F \ge \frac{(n - t)(n - r)}{n - r - t}$ by Lemma \ref{calculus} (a) and our assumption on $\deg F$. Putting these inequalities together, we obtain
\[
\frac{\length (Z)}{t} \ < \ \frac{n - t}{n - r - t} \ \le \ \frac{\deg F}{n - r} .
\]
Therefore $(F, V)$ is linearly stable by Lemma \ref{CurvilSuffices} and implication (3) $\Rightarrow$ (1) of Proposition \ref{LinSstCrit}.
\end{proof}

\noindent Now we can prove the main result of this section.

\begin{theorem} \label{AnyELinSt}
Let $E$ be a vector bundle of rank $r$ and degree $d$, and let $\mu^+ ( E^\vee )$ be as defined in (\ref{muplus}). If
\begin{equation} \label{ConditionK}
k \ > \ \max \left\{ 2g - 2 + \mu^+ ( E^\vee ) + (n - r - 1)(r + 1) + 1, \frac{(n - r)(r + 1) - d}{r} \right\} ,
\end{equation}
then there exists a linearly stable generated coherent system of the form $(E \otimes L_1^k , V)$ where $L_1$ is a line bundle of degree $1$ and $V \subseteq H^0 (C, E \otimes L_1^k )$ is a subspace of dimension $n$. In particular, there exists a linearly stable projective model $\PP E^\vee \to \PP^{n-1}$ of degree $d + kr$ for any $n \ge r + 2$.
\end{theorem}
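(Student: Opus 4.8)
The plan is to apply Proposition \ref{VAmpleLinSt} to the twist $F := E \otimes L_1^k$, where $L_1$ is a line bundle of degree $1$; this bundle has rank $r$ and degree $\deg F = d + kr$. Condition (\ref{ConditionK}) is engineered so that its two terms force exactly the two hypotheses of that proposition: that $\deg F \ge (n-r)(r+1)$, and that $F$ is $(n-r-1)(r+1)$-very ample. The degree bound is immediate, since the second term of the maximum in (\ref{ConditionK}) gives $kr > (n-r)(r+1) - d$, whence $\deg F > (n-r)(r+1)$.

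The very ampleness is the main point, and is where the first term of (\ref{ConditionK}) is used. By Definition \ref{kVeryAmple}, it suffices to show that for every effective divisor $D$ of degree $(n-r-1)(r+1)+1$ the restriction map $H^0(C, F) \to H^0(C, F|_D)$ is surjective; from the sequence $0 \to F(-D) \to F \to F|_D \to 0$ this follows as soon as $H^1(C, F(-D)) = 0$. By Serre duality,
\[
H^1(C, F(-D)) \ \cong \ H^0\!\left( C, E^\vee \otimes L_1^{-k} \otimes \Oc(D) \otimes \Kc \right)^\vee ,
\]
and since tensoring by a line bundle shifts every Harder--Narasimhan slope by its degree,
\[
\mu^+\!\left( E^\vee \otimes L_1^{-k} \otimes \Oc(D) \otimes \Kc \right) \ = \ \mu^+(E^\vee) \, - \, k \, + \, \deg D \, + \, (2g-2) .
\]
As any bundle of negative $\mu^+$ has no nonzero sections, it is enough that this slope be negative; substituting $\deg D = (n-r-1)(r+1)+1$, this is precisely the first term of the maximum in (\ref{ConditionK}). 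Hence $H^1(C, F(-D)) = 0$ for all such $D$, and $F$ is $(n-r-1)(r+1)$-very ample.

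With both hypotheses verified, Proposition \ref{VAmpleLinSt} shows that for a general $n$-dimensional subspace $V \subseteq H^0(C, F)$ the scroll $\psi(\pf) \subset \PP V^\vee$ is linearly stable. Such a $V$ exists and generates $F$: very ampleness alone forces $h^0(C, F) \ge \bigl( (n-r-1)(r+1)+1 \bigr) r \ge n$ (an elementary inequality for $n \ge r+2$ in the spirit of Lemma \ref{calculus}), and a general subspace of this dimension in the space of sections of a generated bundle is again generating. Thus $(F, V) = (E \otimes L_1^k, V)$ is a generated coherent system, and by Theorem \ref{EquivDefns} it is linearly stable. Finally, tensoring by a line bundle does not alter the projectivisation, so $\PP F^\vee \cong \pe$ canonically; this gives the desired linearly stable model $\pe \to \PP^{n-1}$, of degree $\deg F = d + kr$, for every $n \ge r+2$.

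The main obstacle is the very ampleness step, and it explains the otherwise opaque shape of the first term in (\ref{ConditionK}). The occurrence of $\mu^+(E^\vee)$, rather than the ordinary slope $\mu(E^\vee)$, is forced by the fact that $E^\vee$ need not be semistable: to annihilate $H^0$ of the Serre-dual twist one must kill sections coming from every Harder--Narasimhan piece, and it is the maximal slope that is the hardest to make negative. Everything else -- the degree bound, the estimate $h^0(C,F) \ge n$, and the passage back to $\pe$ -- is routine.
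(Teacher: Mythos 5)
Your proof is correct and follows essentially the same route as the paper: the degree bound from the second term of (\ref{ConditionK}), then $(e_0-1)$-very ampleness of $E \otimes L_1^k$ via Serre duality and a Harder--Narasimhan slope estimate using the first term, and finally Proposition \ref{VAmpleLinSt} applied to a general $V$. The only cosmetic difference is that you invoke the standard fact that $\mu^+ < 0$ forces $h^0 = 0$ directly, where the paper routes the same slope computation through Lemma \ref{HNsections}; your added verifications that $h^0(C, F) \ge n$ and that a general $V$ generates are sound and, if anything, slightly more careful than the paper.
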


\begin{proof}
To ease notation, write $e_0 := (n - r - 1)(r + 1) + 1$. Let $L_1 \to C$ be any line bundle of degree $1$. Then $E \otimes L_1^k$ is $(e_0 - 1)$-very ample if
\begin{equation} \label{vanishing}
h^1 ( C, E \otimes L_1^k (-D) ) \ = \ h^0 ( C, \Kc \otimes E^\vee \otimes L_1^{-k} (D) ) \ = \ 0
\end{equation}
for each effective divisor $D$ of degree $e_0$. By Lemma \ref{HNsections}, this would follow if
\[
h^0 \left( C, \Kc \otimes \HN (E^\vee) \otimes L_1^{-k} (D) \right) \ = \ 0
\]
for all such $D$. Now by (\ref{ConditionK}) and the definition of the Harder--Narasimhan filtration (\ref{HNfilt}), each summand of $\Kc \otimes \HN (E^\vee) \otimes L_1^{-k} (D)$ is semistable of slope at most
\[
\mu \left( \Kc \otimes ( E^\vee )^1 \otimes L_1^{-k} (D) \right) \ = \ 2g - 2 + \mu^+ (E^\vee) - k + e_0 .
\]
By (\ref{ConditionK}), this is negative. The desired vanishing (\ref{vanishing}) follows, and we conclude that $E \otimes L_1^k$ is $(e_0 - 1)$-very ample.

Furthermore, (\ref{ConditionK}) also implies that $\deg E \otimes L_1^k = d + kr > (n - r)(r + 1)$. By Proposition \ref{VAmpleLinSt}, if $V$ is general in $\Gr (n, H^0 (C, E))$ then $(E \otimes L_1^k , V)$ is linearly stable.
\end{proof}

\begin{remark}
If $g \ge 2$, or $g \ge 1$ and $r \ge 2$, then a straightforward computation shows that (\ref{ConditionK}) is satisfied if $k > 2g - 2 + \mu^+ ( E^\vee ) + (n - r - 1)(r + 1) + 1$.
\end{remark}

\begin{remark}
Proposition \ref{VAmpleLinSt} may be viewed as a result of the same type as \cite[Theorem 1]{Rob}. In particular, the latter result gives a bound on multiplicity of a point of the image of a general projection of the $d$-uple embedding of a variety. The function of twisting with $L_1^{\otimes k}$ as above is analogous to that of composing with the $d$-uple embedding in \cite{Rob}; namely, to achieve an embedding which is sufficiently ample that one can bound the secant defect introduced by a general projection.
\end{remark}

\section{Linearly stable systems with nonsemistable kernel bundles} \label{Counterexamples}

As discussed in the introduction, there are by now several examples showing that linear stability of a generated $(E, V)$ is in general strictly weaker than slope stability of $\mev$. In this section, as part of an effort to clarify the exact relation between these types of stability, we give two classes of examples illustrating this fact.

\subsection{Systems with nonsemistable ambient bundle}

In \cite[{\S} 5.4]{CHL}, generalising a construction in \cite[{\S} 8]{MS}, a coherent system of type $(2, d, 4)$ is constructed over any curve showing that a linearly stable system may not have semistable dual span bundle. We use the above results to generalise this counterexample to systems of type $(r, d, n)$ for any $r \ge 1$ and $n \ge r + 2$.

\begin{example} \label{PullbackCounterex}
Let $E \to \PP^1$ be the bundle $\opo^{\oplus (r - 1)} \oplus \opo (1)$. Setting $g = 0$ and $\mu (E) = \frac{1}{r}$ and $\mu^+ ( E^\vee ) = 0$, following (\ref{ConditionK}) we choose $k$ satisfying
\begin{equation} \label{P1bound}
k \ > \ \max \left\{ (n - r - 1)(r + 1) - 2, \frac{(n - r)(r + 1) - 1}{r} \right\} .
\end{equation}
We suppose in addition that
\begin{equation} \label{kP1}
kr \ \not\equiv \ -1 \ \mod (n-r) .
\end{equation}
Then by Proposition \ref{VAmpleLinSt}, if $V$ is a general element of $\Gr (n, H^0 (\PP^1 E \otimes \opo (k) )$, the system $(E \otimes \opo (k) , V)$ is linearly stable of type $(r, 1 + kr, n)$. However, since every bundle on $\PP^1$ splits as a direct sum of line bundles, $\mev$ is semistable only if $\rank \mev = (n-r)$ divides $\deg \mev = 1 + kr$; this is excluded by hypothesis (\ref{kP1}).

Now let $C$ be any curve, and let $f \colon C \to \PP^1$ be a covering. Then $(f^* E, f^* V)$ is linearly stable by \cite[Lemma 5.12]{CHL}, but $M_{(f^* E, f^*V)} = f^* \mev$ is nonsemistable since $\mev$ is\footnote{The proof of \cite[Lemma 5.12]{CHL} is slightly incomplete for our purposes due to the inequivalence of Definition \ref{DefnLinSstCohSys} and \cite[Definition 5.1]{CHL}. This can be amended by noting that, with notation as in ibid., since $f$ is finite and in particular flat, $(f^* E)/(f^* E_W) \cong f^* \left( E / E_W \right)$, and if $f^* (E/E_W)$ is not trivial then neither is $E/E_W$, so the required linear slope inequality can be assumed to hold.}.
\end{example}

\begin{remark}
If $r$ and $n$ are even then (\ref{kP1}) is satisfied for all $k$. Also, if $n = 4$ and $r = 2$ the bound (\ref{P1bound}) is easily checked to be $k \ge 3$, and we recover the construction of \cite[Theorem 5.17]{CHL}.
\end{remark}

\subsection{Systems with stable ambient bundle}

In the above example, the bundle $E$ is itself nonsemistable. We will now show that any stable bundle $E$ of large enough degree appears in a generated coherent system $(E, V)$ which is linearly stable, but with $\mev$ not semistable. Firstly, we recall a well known result on ample line bundles which we will require.

\begin{lemma} \label{Ampleness}
Let $X$ be a projective variety and $\cL \to X$ a line bundle. Suppose $V$ is a generating subspace of $H^0 ( X, \cL )$. If $\cL$ is ample, then the associated map $\phi \colon X \to \PP V^\vee$ has finite fibres.
\end{lemma}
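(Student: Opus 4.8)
The plan is to argue by contradiction, using the elementary fact that an ample line bundle restricts to a positive-degree bundle on every complete curve, whereas any curve contracted by $\phi$ must carry a degree-zero restriction of $\cL$.

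First I would record the basic setup. Since $V$ generates $\cL$, the linear system $\PP V \subseteq |\cL|$ is base point free, so $\phi \colon X \to \PP V^\vee$ is a genuine morphism (not merely a rational map), and the standard description of maps to projective space associated to a base point free series gives $\phi^* \cO_{\PP V^\vee}(1) \cong \cL$.

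Next, I would suppose for contradiction that some fibre $\phi^{-1}(p)$ has dimension at least one. As $X$ is projective, this fibre is a projective scheme, so it contains a complete integral curve $Z$. Since $\phi$ contracts $Z$ to the point $p$, restricting the isomorphism above yields $\cL|_Z \cong (\phi|_Z)^* \cO_{\PP V^\vee}(1)$, and the right-hand side is the pullback of $\cO_{\PP V^\vee}(1)$ along a constant map, hence trivial; thus $\deg(\cL|_Z) = 0$. On the other hand, ampleness of $\cL$ is inherited by the restriction to the closed subvariety $Z$, so $\cL|_Z$ is ample and therefore $\deg(\cL|_Z) > 0$. This contradiction forces every fibre of $\phi$ to be zero-dimensional, and a zero-dimensional projective scheme is finite, which is exactly the assertion.

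The main obstacle is essentially nil, as the statement is classical; the only points deserving care are the identification $\phi^* \cO_{\PP V^\vee}(1) \cong \cL$ for a base point free system and the fact that ampleness passes to closed subvarieties, both of which are standard.
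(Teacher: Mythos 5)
Your proof is correct and is essentially the paper's argument: both observe that $\phi$ is a morphism because $V$ generates $\cL$, and that a curve contracted by $\phi$ would carry a trivial restriction of $\cL \cong \phi^* \cO_{\PP V^\vee}(1)$, contradicting ampleness. The only cosmetic difference is that you verify the contradiction directly via degrees on the contracted curve, whereas the paper cites \cite[Proposition 1.2.13]{Laz1} for the fact that ampleness is inherited by closed subvarieties.
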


\begin{proof}
For complete linear systems, this is \cite[Corollary 1.2.15]{Laz1}. The general case is similar: Since $V$ is generating, $\phi$ is defined everywhere. Therefore, if $\phi$ contracts a closed curve $Y \subseteq X$, then the restriction of $\phi^* \cO_{\pv} ( 1 ) \cong \cL$ to $Y$ is trivial. In particular, $\cL|_Y$ is not ample. By \cite[Proposition 1.2.13]{Laz1}, $\cL$ is not ample on $X$.
\end{proof}

Next, we recall the \emph{gonality sequence} of $C$; see for example \cite[{\S} 4]{LN}. For $k \ge 1$, write $\delta_k (C)$ for the smallest degree of a line bundle $L \to C$ satisfying $h^0 (C, L) = k + 1$. 
 Then by \cite[Theorem VII.2.3]{ACGH}, we have
\begin{equation} \label{GonSeq}
\delta_k (C) \ \le \ \left\lceil \frac{kg}{k+1} + k \right\rceil .
\end{equation}
In fact we have equality if $C$ is general in moduli, but we will not require this here. Now we can give the main result of this subsection.

\begin{theorem} \label{AnyENotSst}
Let $C$ be any curve of genus $g \ge 2$. For any $r \ge 2$, there exists an integer $d_0 (r)$ such that if $E$ is any stable bundle of rank $r$ and degree $d \ge d_0 (r)$, then there exists a generating subspace $V \subseteq H^0 ( C, E )$ of dimension $r + 2$ such that $(E, V)$ is generated and linearly stable, but the rank two bundle $\mev$ is not semistable.
\end{theorem}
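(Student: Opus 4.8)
The plan is to realise the instability of $\mev$ through a single sub-line-bundle of degree exceeding $\mu(\mev) = -\tfrac{d}{2}$, while arranging that this sub-line-bundle ``uses all of $V$'' so that it cannot destabilise $(E,V)$ linearly. Concretely, I would first choose, using the gonality estimate (\ref{GonSeq}), a line bundle $A$ on $C$ with $h^0(C,A) = r+2$ and $\deg A =: a$ bounded independently of $d$, and fix a basis $t_1, \dots, t_{r+2}$ of $H^0(C,A)$. I would then seek sections $v_1, \dots, v_{r+2} \in H^0(C,E)$ satisfying the single relation $\sum_{i=1}^{r+2} t_i v_i = 0$. Such a relation makes $(t_1, \dots, t_{r+2})$ into an injection $A^\vee \hookrightarrow V \otimes \Oc$ whose image lies in $\mev$; since the $t_i$ are linearly independent, the smallest coordinate subspace $W \subseteq V$ with $A^\vee \subseteq W \otimes \Oc$ is all of $V$. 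Setting $d_0(r) > 2a$ (and large enough for the vanishing used below), the sub-line-bundle $A^\vee \subseteq \mev$ has slope $-a > -\tfrac{d}{2} = \mu(\mev)$, so $\mev$ is not semistable.

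For existence of the relation, note that the syzygies $\sum t_i v_i = 0$ are exactly $\ker\big(\mu_A \colon H^0(C,A) \otimes H^0(C,E) \to H^0(C, A\otimes E)\big)$, of dimension at least $h^0(C,A)\,h^0(C,E) - h^0(C,A\otimes E) \sim (r+1)d > 0$ for $d \gg 0$. The hard part will be to show that a general syzygy has components $v_i$ that are linearly independent (so that $\dim V = r+2$) and that generate $E$ (so that $(E,V)$ is generated). I would verify generation by a pointwise count: since $E$ is ample and $A\otimes E$ is sufficiently positive for $d \gg 0$, the bundle whose sections are the syzygies is globally generated, and failure of the $v_i$ to span $E$ at some point of $C$ cuts out a locus of codimension at least $2$; Lemma \ref{Ampleness} then guarantees that the resulting morphism is finite. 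Linear independence I would obtain by observing that, for each $0 \ne c \in \C^{r+2}$, the locus of syzygies with $\sum c_i v_i = 0$ is the kernel of a nonzero map of sheaves, hence of positive codimension $\sim d$; as $c$ ranges over a projective space of dimension $r+1$, the union of these loci is still proper once $d \ge d_0(r)$. This simultaneous genericity is the main technical obstacle.

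Granting such a $V$, linear stability is then essentially formal. By Theorem \ref{EquivDefns} it suffices to verify Definition \ref{DefnLinSstCohSys}, i.e.\ that $\lambda(F,W) > \tfrac{d}{2}$ for every proper generated coherent subsystem $(F,W)$ with $\deg F > 0$ and $E/F$ nontrivial. Writing $\mfw \cong (W \otimes \Oc) \cap \mev$, one has $\lambda(F,W) = -\mu(\mfw)$ and $\rank \mfw = \dim W - \rank F \in \{1,2\}$. A rank-two $\mfw$ would be a full-rank subsheaf of $\mev$, forcing $\deg F \ge d$, which is impossible for a proper subsheaf of the stable bundle $E$; so this case is vacuous. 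If $\rank \mfw = 1$, then $\mfw$ is a sub-line-bundle of $\mev$ lying in $W\otimes\Oc$ with $\dim W \le r+1$, so its saturation differs from that of $A^\vee$, which uses all $r+2$ coordinates. Now two distinct sub-line-bundles of the rank-two bundle $\mev$ have degrees summing to at most $\deg \mev = -d$; since the saturation of $A^\vee$ has degree $\ge -a > -\tfrac{d}{2}$ strictly, any competitor of degree $\ge -\tfrac{d}{2}$ is excluded. Thus $\deg \mfw < -\tfrac{d}{2}$, i.e.\ $\lambda(F,W) > \tfrac{d}{2}$, and $(E,V)$ is linearly stable. (Alternatively one can argue directly with Corollary \ref{LinStHypersurface}, since $n = r+2$.)

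This yields a generated, linearly stable coherent system $(E,V)$ of type $(r,d,r+2)$ with $\mev$ non-semistable, as required. I expect the entire difficulty to be concentrated in the genericity step of the construction: once the destabilising $A^\vee$ is known to span all $r+2$ coordinates and to have slope strictly above $-\tfrac{d}{2}$, the separation between linear stability of $(E,V)$ and semistability of $\mev$ follows from the degree bound for sub-line-bundles of a rank-two bundle together with the stability of $E$.
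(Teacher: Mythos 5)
Your construction is, up to dualisation, the same as the paper's: your $A$ is the paper's $S^\vee$ (a generated line bundle of degree $\delta_{r+1}(C)$ with $h^0 = r+2$), and a syzygy $\sum t_i v_i = 0$ with $(t_i)$ a basis of $H^0(C, A)$ is exactly the same data as a homomorphism to $E$ from the rank-$(r+1)$ bundle $F = M_{S^\vee}^\vee$, with generation of $(E, V)$ equivalent to surjectivity of $F \to E$. The genuine gap is precisely where you flag it: you must produce, for \emph{every} stable $E$ of degree $d \ge d_0(r)$ (not merely a general one), a syzygy whose components generate $E$. Your dimension count only yields $\Hom (F, E) \ne 0$, and your sketch of the generation step does not work as written: the appeal to Lemma \ref{Ampleness} is a non sequitur (that lemma concerns finiteness of the morphism attached to an ample line bundle, whereas what is needed is surjectivity of the evaluation $\Oc \otimes V \to E$, a pointwise rank condition), and the codimension bookkeeping is not carried out. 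The paper settles this step by citation: by Popa--Roth \cite[Remark, p.\ 18]{PR}, every stable bundle of rank $r$ and degree $d \ge d_1(r)$ is a quotient of $F$. (Your route could be completed without \cite{PR}: for $d \gg 0$, uniformly over stable $E$, one has $h^1 (C, F^\vee \otimes E(-p)) = 0$ for all $p$ by a slope estimate; the maps in $\Hom(F, E)$ failing surjectivity at a fixed $p$ then form a subvariety of codimension $(r+1-(r-1))(r-(r-1)) = 2$, and sweeping $p$ over the one-dimensional $C$ leaves the surjective maps dense --- but this must actually be written.) Note also that your worry about linear independence of the $v_1 , \ldots , v_{r+2}$ dissolves: once $F \to E$ is surjective, its kernel $N$ is a line bundle of degree $\delta_{r+1}(C) - d < 0$, so $h^0 (C, N) = 0$ and no nonzero section of $F$ dies in $E$; this is exactly the paper's observation in the Butler diagram (\ref{ButlerDiag}).

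Granting the construction, your verification of linear stability is correct and takes a genuinely different route from the paper. The paper argues geometrically via Corollary \ref{LinStHypersurface}: since $F$ is ample, the map $\PP F^\vee \to \pv$ is finite and flat of degree $\delta_{r+1}(C)$, so every point of the hypersurface scroll $\psi ( \pe )$ has multiplicity at most $\delta_{r+1}(C) < \frac{d}{2}$. You instead work algebraically inside the rank-two bundle $\mev$: a generated subsystem $(F', W)$ with $\dim W - \rank F' = 2$ would force $\deg F' \ge d$, impossible for a proper subsheaf of the stable $E$; and in the rank-one case $\dim W \le r+1$, so the saturation of $\mfw$ is a line subsheaf of $\mev$ contained in $\Oc \otimes W$, hence distinct from the saturation of $A^\vee$ (which by construction lies in no proper coordinate subsheaf), and the degree bound for two line subsheaves of $\mev$ with distinct saturations gives $\deg \mfw \le -d + \delta_{r+1}(C) < -\frac{d}{2}$, i.e.\ $\lambda (F', W) > \frac{d}{2}$. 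I checked this case analysis and it is sound; it is arguably more elementary than the paper's (no ampleness, finiteness or flatness needed), and both arguments ultimately pivot on the same inequality $\delta_{r+1}(C) < \frac{d}{2}$ guaranteed by $d_0 (r) > 2 \delta_{r+1}(C)$.
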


\begin{proof}
Let $S^\vee$ be a line bundle of degree $\delta_{r+1} (C)$ with $h^0 ( C, S^\vee ) = r + 2$. As $h^0 (C, L) < r+2$ for $\deg L < \delta_{r+1} (C)$, we see that $S^\vee$ is generated. Set
\[
F^\vee \ := \ M_{S^\vee} \ = \ \Ker \left( \Oc \otimes H^0 ( C, S^\vee ) \to S^\vee \right) .
\]
Then $F$ has rank $r + 1$ and degree $s$, and is generated by $V := H^0 ( C, S^\vee )^\vee$. Note that $F$ need not be stable.

Now by \cite[Remark, p.\ 18]{PR}, there exists an integer $d_1 (r)$ such that for $d \ge d_1 (r)$, \emph{any} stable bundle of rank $r$ and degree $d$ is a quotient of $F$. Set
\[
d_0 (r) \ := \ \max\{ 2 \cdot \delta_{r+1} (C) + 1 , d_1 (r) \} ,
\]
and assume that $d \ge d_0 (r)$. Let $E$ be any stable bundle of rank $r$ and degree $d$. Then there is a short exact sequence $0 \to N \to F \to E \to 0$ where $N$ is a line of degree $\delta_{r+1} (C) - d$. As $d \ge 2 \cdot \delta_{r+1} (C) + 1$, in particular $\deg N < 0$; whence $h^0 ( C, N ) = 0$. Therefore, we have a diagram
\begin{equation} \label{ButlerDiag} \xymatrix{
 & & N \ar[d] \\
 S \ar[r] \ar[d] & \Oc \otimes V \ar[d]^\Iden \ar[r] & F \ar[d] \\
 \mev \ar[r] \ar[d] & \Oc \otimes V \ar[r] & E \\
 N & &
} \end{equation}
where $\mev$ is the kernel bundle of $(E, V)$, a bundle of rank two and degree $-d$. Then
\[
\mu ( S ) \ = \ -\delta_{r+1} (C) \ > \ - \left( \frac{2 \cdot \delta_{r+1}(C) + 1}{2} \right) \ \ge \ -\frac{d}{2} \ = \ \mu \left( \mev \right) ,
\]
so $\mev$ is not semistable.

Now $(E, V)$ is a generated coherent system of type $(r, d, r+2)$. We claim that $(E, V)$ is linearly stable. As the image of $\psi \colon \PP E^\vee \to \PP V^\vee$ is a hypersurface in $\PP V^\vee$, by Corollary \ref{LinStHypersurface} it suffices to show that $\psi ( \PP E^\vee )$ has no points of multiplicity $\ge \frac{d}{2}$.

Now since $F$ is generated, every quotient of $F$ has nonnegative degree; and since $F^\vee = M_{S^\vee}$ is a kernel bundle, $h^0 (C, F^\vee) = 0$. Thus every quotient of $F$ has positive degree; whence $F$ is ample by \cite[Theorem 2.4]{Har71}. As $V$ is a generating subspace of $H^0 ( C, F ) = H^0 ( \PP F^\vee , \cO_{\PP F^\vee} ( 1 ) )$, the natural map
\[
\psi' \colon \PP F^\vee \ \to \ \PP V^\vee
\]
is defined everywhere. Thus $\psi'$ is quasi-finite by Lemma \ref{Ampleness}, and finite since the varieties are projective. By for example \cite[Exercise 9.3 (a)]{Har77}, moreover, $\psi'$ is flat, whence we see that all fibres are zero-dimensional subschemes of $\PP F^\vee$ of the same length. From Lemma \ref{DegreeProperties} (c) it follows that this length is $\deg F = \delta_{r+1} (C)$.

Now $\psi \colon \pe \to \pv$ is the restriction of $\psi'$ to the projective subbundle $\pe \subset \PP F^\vee$. Thus for any $\nu \in \pe$, the fibre $\psi^{-1} ( \psi (\nu) )$ is contained $(\psi')^{-1} (\psi (\nu) )$. As we have seen that this has dimension zero and length $\deg F = \delta_{r+1} (C)$, we deduce that
\[
\mult_{\psi (\nu)} \psi \left( \PP E^\vee \right) \ \le \ \delta_{r+1}(C) \ < \ \frac{2 \cdot \delta_{r+1}(C) + 1}{2} \ \le \ \frac{d}{2}
\]
for all $\nu \in \PP E^\vee$, as desired.

In summary, given any stable $E$ of degree $d \ge d_0 (r)$, we have exhibited a subspace $V \subseteq H^0 ( C, E )$ such that $(E, V)$ is generated and linearly stable, but $\mev$ is not semistable.
\end{proof}

\begin{remark} \quad \begin{enumerate} \renewcommand{\labelenumi}{(\alph{enumi})} \item Note that the subspace $V$ belongs to the image of $H^0 ( C, F ) \to H^0 ( C, E )$, which has codimension at least
\[ \chi ( C, E ) - h^0 ( C, F ) \ = \ d - \delta_{r+1}(C) + g - 1 - h^1 ( C, F ) . \]
In particular, for $d \gg 0$, the subspace $V$ is not general in $\Gr ( r + 2 , H^0 ( C, E ) )$.
\item Suppose that $r = 2$. Then \cite[Theorem 1.5]{BMNO} implies that if $C$ is general, for certain $d \le 2 \left( \left\lceil \frac{2g}{3} \right\rceil + 2 \right)$ the only $(E, V)$ with nonstable $\mev$ are those containing \emph{pencils}; that is subsystems of type $(1, t, 2)$. However, the above does not contradict this, as for us, when $C$ is general, $d \ge 2 \left( \left\lceil \frac{3g}{4} + 3 \right\rceil \right)$.
\end{enumerate}
\end{remark}

\section{A system of type \texorpdfstring{$(r, d, r+2)$}{(r, d, r+2)} with stable kernel bundle} \label{rdr+2}

In this section, we give an example similar to that in \cite[{\S} 5.3.1]{CHL}, in which linear stability of $(E, V)$ does imply slope stability of $\mev$.

\subsection{A sufficient condition for stability of the DSB}

Here we give a generalisation of \cite[Lemma 5.10]{CHL}, whose proof is very similar.

\begin{lemma} \label{LinStImpliesSlopeSt}
Suppose that $r \ge 2$ and $d < 2 \cdot \delta_{r+1} (C)$. Let $(E, V)$ be a generated coherent system of type $(r, d, r+2)$ over $C$ where $E$ is an ample bundle. If $(E, V)$ is linearly stable, then $\mev$ is slope stable.
\end{lemma}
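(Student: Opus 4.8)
The plan is to argue by contradiction, exploiting that $\mev$ has rank $n-r=2$. Since $(E,V)$ is generated, $V$ injects into $H^0(C,E)$, so $H^0(C,\mev)=0$ and every line subbundle of $\mev \subseteq \Oc \otimes V$ has strictly negative degree. Thus $\mev$ fails to be stable precisely when it admits a saturated line subbundle $S$ with $\deg S \ge \mu(\mev) = -\frac{d}{2}$; write $e := -\deg S$, so that $1 \le e \le \frac{d}{2}$. From such an $S$ I would manufacture a destabilising coherent subsystem of $(E,V)$. Let $W \subseteq V$ be the smallest subspace with $S \subseteq \Oc \otimes W$ (equivalently, the span in $V$ of the fibres $S_p$), and let $F \subseteq E$ be the subsheaf generated by $W$, so that $(F,W)$ is a generated coherent subsystem with $S \subseteq \mfw = (\Oc \otimes W) \cap \mev$. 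Because $\Oc \otimes W$ is saturated in $\Oc \otimes V$, the sheaf $\mfw$ is saturated in $\mev$, and $\rank \mfw = \dim W - \rank F \in \{1,2\}$.

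First suppose $W \subsetneq V$. If $\rank \mfw = 1$, then $S$ and $\mfw$ are both saturated line subbundles of $\mev$ with $S \subseteq \mfw$, forcing $\mfw = S$; hence $\lambda(F,W) = \mu(\mfw^\vee) = -\deg S = e \le \frac{d}{2} = \lambda(E,V)$. If instead $\rank \mfw = 2$, then $\mfw = \mev$, so $0 \to \mev \to \Oc \otimes W \to F \to 0$ gives $\deg F = d$ and $\lambda(F,W) = \frac{d}{2} = \lambda(E,V)$. In both cases $F$ is a proper generated subsheaf with $\deg F > 0$, and since $E$ is ample it has no trivial quotient, so $E/F$ is nontrivial. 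Thus $(F,W)$ is an admissible subsystem violating the strict inequality of Definition \ref{DefnLinSstCohSys}, contradicting linear stability of $(E,V)$.

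It remains to treat the degenerate case $W = V$, which is where the gonality hypothesis enters. Dualising $S \hookrightarrow \Oc \otimes V$ yields a surjection $\Oc \otimes V^\vee \to S^\vee$, and on global sections its kernel is $\bigcap_p S_p^\perp = (\Span_p S_p)^\perp = W^\perp$. When $W = V$ this is zero, so $V^\vee \hookrightarrow H^0(C,S^\vee)$ and $h^0(C, S^\vee) \ge r+2$. By the definition of the gonality sequence (together with monotonicity of $\delta_k(C)$), a line bundle carrying at least $r+2$ independent sections has degree at least $\delta_{r+1}(C)$; hence $e = \deg S^\vee \ge \delta_{r+1}(C)$. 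But $e \le \frac{d}{2} < \delta_{r+1}(C)$ by the hypothesis $d < 2\cdot\delta_{r+1}(C)$, a contradiction. This completes all cases, and I conclude that $\mev$ is slope stable.

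I expect the main obstacle to be isolating and disposing of the degenerate possibility $W = V$: this is exactly the situation in which the span construction recovers $(E,V)$ itself and produces no destabilising subsystem, so it cannot be handled by the linear stability inequality and must be excluded by a different mechanism. The role of the gonality bound $d < 2\cdot\delta_{r+1}(C)$ is precisely to rule this out, via the observation that $W = V$ forces $S^\vee$ to carry the maximal number $r+2$ of independent sections. The ampleness of $E$ is used only to guarantee that the quotients $E/F$ arising in the non-degenerate case are never trivial, so that the subsystems produced are genuinely admissible in the sense of Definition \ref{DefnLinSstCohSys}; the bookkeeping distinguishing $\rank \mfw = 1$ from $\rank \mfw = 2$ is then routine.
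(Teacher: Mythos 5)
Your overall strategy is the same as the paper's: you attach to a destabilising line subbundle $S \subset \mev$ the same subspace $W$ (your ``smallest $W$ with $S \subseteq \Oc \otimes W$'' is exactly the paper's $W^\vee = \Image\left( V^\vee \to H^0(C, S^\vee)\right)$), you use ampleness of $E$ to control trivial quotients, and your endgame for $W = V$ --- injectivity of $V^\vee \to H^0(C, S^\vee)$ forcing $h^0(C, S^\vee) \ge r+2$, hence $\deg S^\vee \ge \delta_{r+1}(C) > \frac{d}{2}$ --- is identical to the paper's $N \ne 0$ case. The paper organises the case division through the Butler diagram and the snake lemma, whereas you organise it through saturation of $\mfw = (\Oc \otimes W) \cap \mev$ inside $\mev$; this is equivalent bookkeeping, and your $W \subsetneq V$, $\rank \mfw = 1$ case reproduces the paper's $N = 0$ case correctly, including the verification that $\deg F > 0$ and that $E/F$ is nontrivial.

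There is, however, one incorrect step, in the subcase $W \subsetneq V$ with $\rank \mfw = 2$. There you conclude $\mfw = \mev$, i.e.\ $\mev \subseteq \Oc \otimes W$; but then $F = \Image(\Oc \otimes W \to E) = (\Oc \otimes W)/\mev$, so that
\[
E/F \ \cong \ (\Oc \otimes V)/(\Oc \otimes W) \ \cong \ \Oc \otimes (V/W) ,
\]
which is a \emph{trivial} sheaf of positive rank. Your assertion ``since $E$ is ample it has no trivial quotient, so $E/F$ is nontrivial'' is therefore false in this subcase, and $(F, W)$ is \emph{not} an admissible subsystem in the sense of Definition \ref{DefnLinSstCohSys}, so no contradiction with linear stability is available here. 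The correct disposal is the one your own observation supplies: the trivial quotient $E \twoheadrightarrow \Oc \otimes (V/W)$ directly contradicts ampleness of $E$, so the subcase is vacuous. This is precisely what the paper's snake-lemma computation does in its $N \ne 0$ case, where it shows $T \cong \Oc \otimes (V/W)$ and invokes ampleness to force $T = 0$, hence $W = V$. So ampleness is not used ``only to guarantee admissibility'', as your closing paragraph claims: its second, essential role is to show that $\mev \subseteq \Oc \otimes W$ can only happen when $W = V$, funnelling everything into the gonality case. With that one-line correction your proof is complete and agrees with the paper's.
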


\begin{proof}
Suppose that $\mev$ is not stable. Let $S \subset \mev$ be a destabilising line subbundle. Following \cite{CT}, we define a subspace $W \subseteq V$ by setting
\begin{equation} \label{DefnW}
W^\vee \ := \ \Image \left( V^\vee \ \to \ H^0 ( C, \mevd ) \ \to \ H^0 ( C, S^\vee ) \right) ,
\end{equation}
and form the Butler diagram of $(E, V)$ by $S$:
\[ \xymatrix{
 & 0 \ar[d] & 0 \ar[d] \ar[r] & N \ar[d] \ar[r] & \cdots \\
0 \ar[r] & S \ar[r] \ar[d] & \Oc \otimes W \ar[r] \ar[d] & F_S \ar[r] \ar[d] & 0 \\
0 \ar[r] & \mev \ar[r] \ar[d] & \Oc \otimes V \ar[r] \ar[d] & E \ar[r] \ar[d] & 0 \\
\cdots \ar[r] & Q \ar[r] \ar[d] & \Oc \otimes \left( \frac{V}{W} \right) \ar[r] \ar[d] & T \ar[r] \ar[d] & 0 \\
 & 0 & 0 & 0 &
} \]
Firstly, suppose that $N = 0$. Then $F_S$ is precisely the subsheaf of $E$ generated by $W$. Clearly $\rank F_S = \dim W - 1$. Being generated, $F_S$ has nonnegative degree. It is easy to see that the degree is zero only if $S \cong \Oc$, which is impossible since $h^0 (C, S) \le h^0 (C, \mev) = 0$. Thus $\deg F_S > 0$. Now
\[
\lambda ( F_S , W ) \ = \ \frac{\deg F_S}{\dim W - \rank F_S} \ = \ \deg S^\vee .
\]
As $S$ destabilises $\mev$, we have $\deg S^\vee \le \mu ( \mevd) \ = \ \frac{d}{2}$. Therefore,
\[
\lambda ( F_S , W ) \ \le \ \frac{d}{2} \ = \ \frac{d}{(r+2) - r} \ = \ \lambda (E, V) .
\]
Moreover, since $E$ is ample, $T = E/F_S$ is not trivial. Therefore, $(F_S, W)$ linearly destabilises $(E, V)$.

On the other hand, suppose that $N$ is nonzero. By the snake lemma, we have an exact sequence
\[
0 \ \to \ N \ \to \ Q \ \to \Oc \otimes \left( \frac{V}{W} \right) \ \to \ T \ \to \ 0 .
\]
Thus $N$ injects into $Q$ and $Q/N$ injects into a trivial sheaf. As $Q$ has rank one and $\Oc$ is torsion free, $Q \cong N$ and $\Oc \otimes \left( \frac{V}{W} \right) \cong T$. Due to ampleness of $E$, we have $T = 0$, and so $W = V$. By (\ref{DefnW}), this implies in particular that $h^0 ( C, S^\vee ) \ge r+2$, whence $\deg S^\vee \ge \delta_{r+1} (C)$. As $S$ destabilises $\mev$, we have
\[
\delta_{r+1} \ \le \ \deg ( S^\vee ) \ \le \ \frac{d}{2} ,
\]
and $d \ge 2 \cdot \delta_{r+1} (C)$.

In summary, we have shown that if $\mev$ is not stable, then $(E, V)$ is not linearly stable and/or $d \ge 2 \cdot \delta_{r+1} (C)$. The statement follows.
\end{proof}

\subsection{A system with stable DSB}

We proceed in steps with the construction of the desired coherent system. For the remainder of the section, we suppose that $r \ge 2$ and $g \ge \max\{ (r-1)(r+2) , 6 \}$, and fix a Brill--Noether general curve $C$ of genus $g$. To ease notation, we abbreviate $\delta_i (C)$ to $\delta_i$ for $i \ge 1$. Choose $e$ such that
\begin{equation} \label{NumHyp}
\delta_{r-1} + 2 \ \le \ e \ \le \delta_{r+1} . 
\end{equation}

\noindent \underline{Step 1.} We show that there exists a bundle $F \to C$ of rank $r - 1$ and degree $e$ which is ample and generated with $h^0 (C, F) = r$:

Following \cite{BBPN}, we denote by $G_L (r-1, e, r)$ the moduli space of $\alpha$-stable coherent systems of type $(r-1, e, r)$ where $\alpha \gg 0$. We write $B (r-1, e, r)$ for the Brill--Noether variety parametrising stable bundles of rank $r-1$ and degree $e$ over $C$ with at least $r$ independent sections. One checks that the numerical hypotheses on $e$, $g$ and $r$ imply that
\[
g + ( r - 1 ) - \left\lceil \frac{g}{r} \right\rceil \ \le \ e \ \le \ g + ( r - 1 ) .
\]
 Then by \cite[Corollary 9.2 (1)]{BBPN}, the locus $B (r-1, e, r)$ is irreducible of dimension
\[
\beta(r-1, e, r) \ := \ (r-1)^2 (g-1) + 1 - r ( r - e + (r-1)(g-1) ) \ = \
 re - (r-1)(g + r)
\]
and smooth outside $B(r-1, e, r+1)$. 
 By \cite[Corollary 9.2 (2)]{BBPN}, the association $(F, V) \mapsto F$ defines a dominant map $G_L (r-1, e, r) \dashrightarrow B (r-1, e, r)$. Therefore, for general $F \in B (r-1, e, r)$ the coherent system $(F, H^0 (C, F))$ is $\alpha$-stable of type $(r-1, e, r)$ for $\alpha \gg 0$. By \cite[Theorem 3.1 (3)]{BBPN}, after deforming $F$ if necessary, we may assume that $F$ is generated and $h^0 (C, F^\vee ) = 0$. This implies also that $F$ has no quotients of nonpositive degree. As our base field is $\C$, it follows by \cite[Theorem 2.4]{Har71} that $F$ is ample.\\
\\
\noindent \underline{Step 2.} We show that there exists a generated $L \in \Pic^{e-1} (C)$ with $h^0 (C, L) = 2$:

As $e \ge \delta_{r-1} + 2 > \delta_1$, there exists $L$ of degree $e - 1$ with $h^0 (C, L) \ge 2$. A computation shows that the condition $g \ge (r-1)(r+2)$ implies that $e - 1 \le g + 1$. 
 Therefore, as $C$ is Brill--Noether general, after deforming $L$ if necessary we may assume that $h^0 (C, L) = 2$, and that $L$ is generated.\\
\\
\noindent \underline{Step 3.} We show that there exists a nontrivial extension $0 \to F \to E \to L \to 0$ such that every section of $L$ lifts to $E$, and which is ample and generated:

We use a similar approach to that in \cite[Lemma 4.1]{CH}. The desired extension will exist if and only if the cup product map
\begin{equation} \label{CupPr}
H^1 (C, \Hom(L, F)) \ \to \ \Hom \left( H^0 (C, L), H^1 (C, F) \right)
\end{equation}
has nonzero kernel. This is certainly true if $h^1 (C, F) > h^0 (C, L) \cdot h^1 (C, F)$. Let us verify this inequality. By Riemann--Roch, $h^1 (C, \Hom (L, F) ) \ge (r-2)(e-1) - 1 + (r-1) (g-1)$ 
 and
\[
h^1 (C, F) \ = \ h^0 (C, F) - \chi(C, F) \ = \ r - (e - (r-1)(g-1)) \ = \ r - e + (r-1)(g-1) .
\]
The condition $h^1 (C, F) > h^0 (C, L) \cdot h^1 (C, F)$ is therefore
\[
(r-2)(e-1) - 1 + (r-1) (g-1) \ > \ 2 \cdot ( r - e + (r-1)(g-1) ) \ = \ 2r - 2e + 2(r-1)(g-1) ,
\]
which reduces to $e > \frac{r-1}{r} \cdot g + 2$.  
 The latter is satisfied because $r \ge 2$ and
\[
e \ \ge \ \delta_{r-1} + 2 \ \ge \ \frac{r-1}{r} \cdot g + (r - 1) + 2 .
\]
Thus (\ref{CupPr}) has nonzero kernel, and there exists a nontrivial extension $0 \to F \to E \to L \to 0$ such that every section of $L$ lifts to $E$. As both $F$ and $L$ are ample and generated, so is $E$.\\
\\
\noindent \underline{Step 4.} We show that the type $(r, 2e-1, r+2)$ coherent system $(E, H^0 (C, E))$ is linearly stable:

Let $G \subset E$ be a proper generated elementary transformation of positive degree. Then necessarily $h^0 (C, G) = r+1$. There is a diagram
\[ \xymatrix{
0 \ar[r] & F \ar[r] & E \ar[r] & L \ar[r] & 0 \\
0 \ar[r] & G_1 \ar[r] \ar[u] & G \ar[r] \ar[u] & G_2 \ar[r] \ar[u] & 0 .
} \]
As $G_2$ is generated and $G_2 \subseteq L$, we have $1 \le h^0 (C, G_2) \le 2$.

If $h^0 (C, G_2) = 1$ then $G_2 = \Oc$ and $G = G_1 \oplus \Oc$. As $h^0 (C, G) = r+1$, we must have $h^0 (C, G_1) = r$. As $F$ is generated, necessarily $G_1 = F$. By Corollary \ref{LinSlopeEquality}, we have
\[
\lambda (G) \ = \ \lambda (F \oplus \Oc) \ = \ \lambda (F) \ = \ \frac{\deg F}{(r+1) - r} \ = \ e \ > \frac{2e - 1}{2} \ = \ \lambda (E) .
\]

On the other hand, if $h^0 (C, G_2) = 2$ then $G_2 = L$ since $L$ is generated. Then $h^0 (C, G_1) = r - 1$. As $F$ is ample and generated with $h^0 (C, F) = \rank F + 1$.

Now no proper subbundle $F' \subset F$ has $h^0 (C, F') > \rank F$. For: if such an $F'$ existed then $F/F'$ would be a generated bundle with at most $\rank (F/F')$ sections, which must be trivial. But since $F$ is ample, it has no nonzero trivial quotient.

In particular, the image of $\Oc \otimes H^0 (C, G_1) \to F$ is not contained in any proper subbundle of $F$. It follows that $\Oc \otimes H^0 (C, G_1) \to G_1$ is a sheaf injection, with image a rank $r-1$ trivial subsheaf of $G_1$.

Now if $\deg G_1$ were zero, then since $h^0 (C, G_1) = r-1$ we would have $G_1 = \Oc^{\oplus (r-1)}$. Then by Lemma \ref{genGTiB} below, the extension $0 \to \Oc^{\oplus (r-1)} \to G \to L \to 0$ would be trivial. But then $L$ would be a subsheaf of $E$, contradicting the fact that $E$ is a nontrivial extension. We conclude that $\deg G_1 \ge 1$, and therefore that
\[
\lambda (G) \ \ge \ \frac{1 + (e - 1)}{(r+1) - r} \ = \ e \ > \ \frac{2e - 1}{2} \ = \ \lambda(E) .
\]
By Proposition \ref{LinSstCrit}, implication $(2) \Rightarrow (1)$ and Theorem \ref{EquivDefns}, we conclude that $(E, H^0 (C, E))$ is linearly stable.\\
\\
\underline{Step 5.} We can now conclude: By (\ref{NumHyp}), we have $\deg E = 2e - 1 \le 2 \cdot \delta_{r+1} - 1$. By Lemma \ref{LinStImpliesSlopeSt} the kernel bundle $M_E$ is stable, and our example is complete.

\begin{lemma} \label{genGTiB}
Suppose that $C$ is generic and $L$ a line bundle with $h^0 (C, L) = s$ which is not of the form $\overline{L}^2$ for any $\overline{L} \in W^1_{\frac{g+2}{2}}$. Let $\Lambda$ be a vector space of dimension $r-1$ and $0 \to \Oc \otimes \Lambda \to G \to L \to 0$ an extension. If every section of $L$ lifts to $G$, then $G$ is a trivial extension.
\end{lemma}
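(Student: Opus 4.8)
The plan is to translate both hypotheses into statements about the extension class and then reduce the whole lemma to the surjectivity of a single multiplication map. First I would record that the extension $0 \to \Oc \otimes \Lambda \to G \to L \to 0$ is classified by a class $e \in \mathrm{Ext}^1 (L, \Oc \otimes \Lambda) \cong H^1 (C, L^{-1}) \otimes \Lambda$; writing $e = \sum_j \xi_j \otimes \lambda_j$ for a basis $\{ \lambda_j \}$ of $\Lambda$ and $\xi_j \in H^1 (C, L^{-1})$, the extension is trivial precisely when every $\xi_j = 0$. The connecting homomorphism $\delta \colon H^0 (C, L) \to H^1 (C, \Oc) \otimes \Lambda$ of the long exact cohomology sequence is cup product with $e$, so the hypothesis that every section of $L$ lifts to $G$ is exactly $\delta = 0$, which decouples into the conditions $s \cup \xi_j = 0$ for all $s \in H^0 (C, L)$ and all $j$, where $\cup$ denotes the cup product $H^0 (C, L) \otimes H^1 (C, L^{-1}) \to H^1 (C, \Oc)$. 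Thus, independently of $\Lambda$, the lemma is equivalent to the assertion that the subspace
\[ K \ := \ \{ \xi \in H^1 (C, L^{-1}) : s \cup \xi = 0 \text{ for all } s \in H^0 (C, L) \} \]
is zero.

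Next I would identify $K$ by Serre duality. The cup product above is the transpose of the multiplication map $\mu \colon H^0 (C, L) \otimes H^0 (C, \Kc) \to H^0 (C, \Kc \otimes L)$, so that $K \cong \Coker (\mu)^\vee$; hence the lemma reduces to surjectivity of $\mu$. Equivalently, writing $M_L := \Ker ( \Oc \otimes H^0 (C, L) \to L )$ for the kernel bundle of $(L, H^0 (C, L))$ and dualising its defining sequence, one obtains an exact sequence $0 \to H^0 (C, L)^\vee \to H^0 ( C, M_L^\vee ) \to K \to 0$; so $K = 0$ if and only if the dual span bundle $M_L^\vee$ carries no sections beyond the expected $h^0 (C, L)$ of them.

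To prove the vanishing I would separate the cases. When $h^0 (C, L) = 2$ — the only case needed in the construction of this section — the bundle $L$ is a base point free pencil, $M_L \cong L^{-1}$, and therefore $h^0 ( C, M_L^\vee ) = h^0 (C, L)$ at once, giving $K = 0$; alternatively the base point free pencil trick shows directly that $\Coker (\mu)$ injects into $H^1 (C, \Kc \otimes L^{-1})$ via a map which Serre duality identifies with an isomorphism, so the cokernel vanishes. For general $s := h^0(C,L)$ I would reinterpret a nonzero $\xi \in K$ as a non-split rank two extension $0 \to \Oc \to F \to L \to 0$ in which every section of $L$ lifts, that is, a bundle $F$ with $\det F = L$ attaining the maximal value $h^0 (C, F) = h^0 (C, L) + 1$, and then rule such an $F$ out on a general curve.

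The main obstacle is precisely this last step for $s \ge 3$: showing that the existence of a rank two bundle $F$ with $\det F = L$ and $h^0 (C, F) = h^0 (C, L) + 1$ forces $L \cong \overline{L}^{\otimes 2}$ for some $\overline{L} \in W^1_{(g+2)/2}$. I expect to handle it by a Brill--Noether dimension count on the general curve $C$: the determinant map $\wedge^2 H^0 (C, F) \to H^0 (C, L)$ is surjective with kernel of dimension $\binom{s}{2}$, and analysing the sub-line-bundle structure of $F$ that this forces — with Clifford's inequality pinning down the minimal pencil degree $\tfrac{g+2}{2}$ — should show that the only $L$ admitting such an $F$ are the excluded squares, for which one recovers $F \cong \overline{L}^{\oplus 2}$ (the pencil trick sequence for $\overline L$ tensored by $\overline L$). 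Since the application needs only $s = 2$, the delicate general case could instead be disposed of by invoking the Koszul cohomology vanishing $K_{0,1} ( C, \Kc ; L ) = 0$, known for general $C$ away from the square locus.
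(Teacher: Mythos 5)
Your proposal is correct and bottoms out at the same key input as the paper, but reaches it by a genuinely different and in one respect cleaner route. Where the paper handles arbitrary $\Lambda$ by induction on $\dim \Lambda$ --- splitting $\Lambda = \Lambda_1 \oplus \Lambda_{r-2}$, passing to the rank-two quotient $\bG$ with $h^0 (C, \bG) = s+1$, invoking \cite[Proposition 2.2]{GT} to split off $\Oc \otimes \Lambda_1$, and repeating --- you decouple the extension class at the outset: under $\mathrm{Ext}^1 (L, \Oc \otimes \Lambda) \cong H^1 (C, L^{-1}) \otimes \Lambda$ the lifting hypothesis annihilates each component $\xi_j$ separately, so the whole lemma collapses in one stroke to $K = 0$, i.e., to surjectivity of the Petri-type multiplication $\mu \colon H^0 (C, L) \otimes H^0 (C, \Kc) \to H^0 (C, \Kc \otimes L)$; this replaces the paper's diagram-chase induction with linear algebra, and your Serre-duality identification $K \cong \Coker (\mu)^\vee$ is exactly the observation the paper makes only in passing for $r = 2$. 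The rank-two input is then the same in both treatments: the paper simply cites \cite[Proposition 2.2]{GT}, while you prove the case $s = 2$ --- the only one arising in Step 4, where $L$ is a generated pencil, and squares of pencils have $h^0 \ge 3$ so the exclusion hypothesis is vacuous --- completely and correctly via $M_L \cong L^{-1}$ (equivalently the base-point-free pencil trick). Be aware, though, that your Brill--Noether dimension-count paragraph for $s \ge 3$ is a sketch, not a proof: ruling out rank-two bundles $F$ with $\det F = L$ and $h^0 (C, F) = s+1$ on a general curve away from the square locus is precisely the content of \cite[Proposition 2.2]{GT} (equivalently the Koszul vanishing you invoke), so on its own that paragraph would be a gap, and your fallback citation --- which is exactly what the paper does --- is what closes it. One dividend of your computation worth noting: if $L$ has a base divisor $D$ then the image of $\mu$ lies in the proper subspace $H^0 (C, \Kc \otimes L (-D))$, so $K \ne 0$; base-point-freeness of $L$, automatic in the paper's application and explicit in your $s = 2$ argument, is thus genuinely needed for the conclusion.
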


\begin{proof}
For $r = 2$, this is equivalent to \cite[Proposition 2.2]{GT}, as the cup product map
\[
H^1 (C, \Hom (L, \Oc) ) \ \to \ \Hom \left( H^0 (C, L), H^1 (C, \Oc) \right)
\]
is dual to the Petri multiplication map $H^0 (C, \Kc ) \otimes H^0 (C, L) \to H^0 (C, \Kc L )$.

Suppose that $r \ge 3$, and let $G$ be an extension as above to which every section of $L$ lifts. Choose a splitting $\Lambda = \Lambda_1 \oplus \Lambda_{r-2}$, where $\dim \Lambda_i = i$. We find a diagram
\[ \xymatrix{
\Oc \otimes \Lambda_{r-2} \ar[r] \ar[d]^= & \Oc \otimes \Lambda \ar[r] \ar[d] & \Oc \otimes \Lambda_1 \ar[d] \\
\Oc \otimes \Lambda_{r-2} \ar[r] & G \ar[r] \ar[d] & \bG \ar[d] \\
& L \ar[r]^= & L
} \]
where $\bG$ is a bundle of rank two satisfying $h^0 (C, \bG) = h^0 (C, G) - (r-2) = s + 1$. In particular, every section of $L$ lifts to $\bG$. Thus by \cite[Proposition 2.2]{GT}, there is a splitting $\bG \to \Oc \otimes \Lambda_1$. Thus the composed map $G \to \bG \to \Oc \otimes \Lambda_1$ 
 defines a splitting $G \cong (\Oc \otimes \Lambda_1 ) \oplus \tG$, where $\tG$ is a bundle of rank $r-1$ with $h^0 (C, \tG) = s + r - 2$. Using the snake lemma, we now find a diagram
\[ \xymatrix{
 & & \Oc \otimes \Lambda_{r-2} \ar[d] \\
\Oc \otimes \Lambda_1 \ar[r] \ar[d] & \Oc \otimes \Lambda_1 \oplus \tG \ar[r] \ar[d]^= & \tG \ar[d] \\
\Oc \otimes \Lambda \ar[r] \ar[d] & G \ar[r] & L \\
\Oc \otimes \Lambda_{r-2} & &
} \]
By induction, we may assume that $\tG \cong (\Oc \otimes \Lambda_{r-2} ) \oplus L$. As $G \cong ( \Oc \otimes \Lambda_1 ) \oplus \tG$, we see that $G \cong ( \Oc \otimes \Lambda ) \oplus L$, as desired.
\end{proof}

\begin{remark}
For $r = 2$, the bundle $E$ is desemistabilised by the subbundle $F$. In fact the above construction specialises to \cite[Example 5.1]{CH}, whence we see that the coherent system $(E, H^0 (C, E))$ is also $\alpha$-nonsemistable for $\frac{1}{\alpha} \gg 0$. For $r \ge 3$, the subbundle $F$ does not necessarily destabilise $E$, 
 but we are unable to decide the stability of the bundle $E$ itself with the methods presently at our disposal. However, we hope that the example in this section will nonetheless illustrate the role that linear stability can play in proving stability of kernel bundles of low degree.
\end{remark}


\begin{thebibliography}{99}

\bibitem [ACGH85] {ACGH} E.\ Arbarello; M.\ Cornalba; P.\ A.\ Griffiths; J.\ Harris: \textsl{Geometry of algebraic curves}, volume I, Grundlehren der mathematischen Wissenschaften \textbf{267}. New York: Springer-Verlag, 1985.


\bibitem [BO25] {BO} A.\ Bajravani; A.\ Ortega: \textsl{Linear stability and rank two Clifford indices of algebraic curves with applications}, arXiv:2509.06149 (2025).

\bibitem [BS08] {BS} M.\ \'A.\ Barja; L.\ Stoppino: \textsl{Linear stability of projected canonical curves with applications to the slope of fibred surfaces}, J.\ Math.\ Soc.\ Japan \textbf{60}, no.\ 1 (2008), 171--192.


\bibitem [BB-PN08] {BBPN} U.\ N.\ Bhosle; L.\ Brambila-Paz; P.\ E.\ Newstead: \textsl{On coherent systems of type $(n, d , n+1)$ on Petri curves}, Manuscr.\ Math.\ \textbf{126}, no.\ 4 (2008), 409-441.

\bibitem [BBN15] {BBN} U.\ N.\ Bhosle, L.\ Brambila-Paz and P.\ E.\ Newstead: \textsl{On linear series and a conjecture of D.\ C.\ Butler}, Int.\ J.\ Math.\ \textbf{26}, no.\ 2 (2015), Article ID 1550007, 18 pp.

\bibitem [But94] {But94} D.\ C.\ Butler: \textsl{Normal generation of vector bundles over a curve}, J.\ Diff.\ Geom.\ \textbf{39}, no.\ 1 (1994), 1-34.

\bibitem [But97] {But97} D.\ C.\ Butler: \textsl{Birational maps of moduli of Brill--Noether pairs}, arXiv:alg-geom/9705009 (1997).

\bibitem [BMNO19] {BMNO} L.\ Brambila-Paz; O.\ Mata-Guti\'errez; P.\ E.\ Newstead; A. Ortega: \textsl{Generated coherent systems and a conjecture of D.\ C.\ Butler}, Int.\ J.\ Math.\ \textbf{30}, no.\ 5 (2019), article ID 1950024, 25 pp.

\bibitem [BT16] {BT} L.\ Brambila-Paz; H.\ Torres-L\'opez: \textsl{On Chow stability for algebraic curves}, Manuscr.\ Math.\ \textbf{151}, no.\ 3--4 (2016), 289--304.

\bibitem [CHL23] {CHL} A.\ Castorena; G.\ H.\ Hitching; E.\ Luna: \textsl{Linear stability of coherent systems and applications to Butler's conjecture}, arXiv:2312.09309. To appear in LMS Lecture Note Series ``Moduli, Motives and Bundles -– New Trends in Algebraic Geometry''.

\bibitem [CH24] {CH} A.\ Castorena; G.\ H.\ Hitching: \textsl{On $\alpha$-stability and linear stability of generated coherent systems}, arXiv:2409.12794 (2024). 

\bibitem [CMT22] {CMT} A.\ Castorena; E.\ C.\ Mistretta; H.\ Torres-L\'opez: \textsl{On linear stability and syzygy stability}, Abhandlungen aus dem Mathematischen Seminar der Universit\"at Hamburg {\bf 92} (2022), 91--103.

\bibitem [CT-L18] {CT} A.\ Castorena; H.\ Torres-L\'opez: \textsl{Linear stability and stability of syzygy bundles}, Int.\ J.\ Math.\ \textbf{29}, no.\ 11 (2018), Article ID 1850080, 14 pp.


\bibitem [GTiB09] {GT} T.\ Gwena; M.\ Teixidor i Bigas: \textsl{Maps between Moduli Spaces of Vector Bundles and the Base Locus of the Theta Divisor}, Proc.\ Amer.\ Math.\ Soc. \textbf{137}, no.\ 3 (2009), 853--861.

\bibitem [Har71] {Har71} R.\ Hartshorne: \textsl{Ample vector bundles on curves}, Nagoya Math.\ J.\ \textbf{43} (1971), 73--89.

\bibitem [Har77] {Har77} R.\ Hartshorne: \textsl{Algebraic geometry}, Graduate Texts in Mathematics 52. New York-Heidelberg-Berlin: Springer-Verlag, 1977.


\bibitem [Hit20] {Hit20} G.\ H.\ Hitching: \textsl{A Riemann--Kempf singularity theorem for higher rank Brill--Noether loci}, Bull.\ London Math.\ Soc.\ \textbf{52}, no.\ 4 (2020), 620--640.

\bibitem [Hit23] {Hit23} G.\ H.\ Hitching: \textsl{Secant loci of scrolls over curves}, ``Moduli Spaces and Vector Bundles –- New Trends'' (Proceedings of VBAC conference, Univ.\ of Warwick, 2022, ed.\ P.\ Gothen, M.\ Melo and M.\ Teixidor i Bigas), Contemporary Math.\ 803, 279--313. Amer.\ Math. Soc., 2024.

\bibitem [KN95] {KN} A.\ D.\ King; P.\ E.\ Newstead: \textsl{Moduli of Brill--Noether pairs on algebraic curves}, Int.\ J.\ Math.\ \textbf{6}, no.\ 5 (1995), 733--748.

\bibitem [LN10] {LN} Lange, H.; Newstead, P.\ E.: \textsl{Clifford indices for vector bundles on curves}. In ``Affine flag manifolds and principal bundles'' (Schmitt, Alexander, ed.) Basel: Birkh\"auser. Trends in Mathematics, 165--202 (2010).

\bibitem [Laz04] {Laz1} R.\ Lazarsfeld: \textsl{Positivity in algebraic geometry I}. Ergebnisse der Mathematik und ihrer Grenzgebiete. 3.\ Folge \textbf{48}. Berlin: Springer, 2004.

\bibitem [Mer] {Mer} V.\ Mercat: \textsl{Le probl\`eme de Brill--Noether et le th\'eor\`eme de Teixidor}, Manuscr.\ Math.\ \textbf{98}, no.\ 1, 75--85 (1999).

\bibitem [Mis08] {Mis} E.\ C.\ Mistretta: \textsl{Stability of line bundle transforms on curves with respect to low codimensional subspaces}, J.\ Lond.\ Math.\ Soc., II.\ Ser.\ \textbf{78}, no.\ 1, 172--182 (2008).

\bibitem [MS12] {MS} E.\ C.\ Mistretta; L.\ Stoppino: \textsl{Linear series on curves: stability and Clifford index}, Int.\ J.\ Math.\ \textbf{23}, no.\ 12 (2012), paper no. 1250121, 25 pp.

\bibitem [Mum77] {Mum} D.\ Mumford: \textsl{Stability of projective varieties}, Enseign.\ Math., II.\ S\'er.\ \textbf{23} (1977), 39--110.

\bibitem [New22] {New} P.\ E.\ Newstead: \textsl{Higher rank Brill--Noether theory and coherent systems: Open questions}, Proyecciones \textbf{41}, no.\ 2 (2022), 449--480.


\bibitem [PR03] {PR} M.\ Popa; M.\ Roth: \textsl{Stable maps and Quot schemes}, Invent.\ Math.\ \textbf{152}, no.\ 3 (2003), 625--663.

\bibitem [Rob71] {Rob} J.\ Roberts: \textsl{Generic projections of algebraic varieties}, Amer.\ J.\ Math.\ \textbf{93} (1971), 191--215.


\end{thebibliography}
\end{document}